\theoremstyle{plain}
\newtheorem{thm}{Theorem}[section]
\newtheorem{cor}[thm]{Corollary}
\newtheorem{lem}[thm]{Lemma}
\newtheorem{prop}[thm]{Proposition}
\theoremstyle{definition}
\newtheorem{rem}[thm]{Remark}
\newtheorem{question}[thm]{Question}
\newcommand{\Ee}{\mathscr{E}}
\newcommand{\T}{\mathbb{T}}
\newcommand{\Or}{\mathcal{O}}
\newcommand{\U}{\mathcal{U}}
\newcommand{\V}{\mathcal{V}}
\newcommand{\eqdef}{\stackrel{\scriptscriptstyle\rm def}{=}}
\let\oldmarginpar\marginpar
\renewcommand\marginpar[1]{\-\oldmarginpar[\raggedleft\tiny #1]%
{\raggedright\tiny #1}}
\begin{document}

\title[Entropy and dimension of irregular sets]{Topological entropy and Hausdorff dimension of irregular sets for non-hyperbolic dynamical systems}

\author[Barrientos et al.]{Pablo G. Barrientos}
\address[Barrientos]{Instituto  de Matem\'atica e Estat\'istica, Universidade Federal Fluminense, Gragoata Campus, Rua Prof.\ Marcos Waldemar de Freitas Reis, S/n-Sao Domingos, Niteroi - RJ, 24210-201, Brazil}
\email{pgbarrientos@id.uff.br}

\author[]{Yushi Nakano}
\address[Nakano]{Department of Mathematics, Tokai University, 4-1-1 Kitakaname, Hiratuka, Kanagawa, 259-1292, Japan}
\email{yushi.nakano@tsc.u-tokai.ac.jp}

\author[]{Artem Raibekas}
\address[Raibekas]{
Instituto  de Matem\'atica e Estat\'istica, Universidade Federal Fluminense, Gragoata Campus, Rua Prof.\ Marcos Waldemar de Freitas Reis, S/n-Sao Domingos, Niteroi - RJ, 24210-201, Brazil}
\email{artemr@id.uff.br}

\author[]{Mario Roldan}
\address[Roldan]{Departamento de Matem\'atica, Universidade Federal de Santa Catarina
Campus Universit\'ario Trindade,
Florian\'opolis - SC, 88.040-900 Brazil}
\email{roldan@impa.br}

\subjclass[2010]{Primary: 37B40, 37C40, 37C45.}
\keywords{Irregular set,  topological entropy, Hausdorff dimension, non-hyperbolic systems}

\date{}

\begin{abstract}
We systematically investigate examples of non-hyperbolic dynamical systems having irregular sets of full topological entropy and full Hausdorff dimension.
The examples include some partially hyperbolic systems and geometric Lorenz flows.
We also pose interesting questions for future research.
\end{abstract}

~\vspace{1.5cm}

\maketitle
\section{Introduction}
Let us consider a continuous map $f$ of a totally bounded metric space $X$.
We define the irregular set $I(f)$ of  $f$ as the set  of  points $x \in X$ such that there exists a continuous function $\phi: X\to \mathbb{R}$ for which the time average
\[
    \lim_{n\to \infty} \frac{1}{n}\sum_{i=0}^{n-1}\phi(f^i(x))
\]
does not exist. It turns out that $I(f)$ has zero measure for any
invariant measure by Birkhoff's ergodic theorem. Although this set
is negligible from the point of view of Ergodic Theory, it is
still possible to be topologically large or to have large
topological entropy and Hausdorff dimension. In fact, dynamical
systems with  irregular sets of full topological entropy and full
Hausdorff dimension  have been intensively studied in the
literature~\cite{PP84,BS00,FFW01,EKL05,T10,T17,ma2017hausdorff,DOT,barreira2018topological,FKOT19}.
However, most of these works only deal with continuous/smooth maps
having inherent properties from (non)uniform hyperbolicity, such
as  the \emph{specification-like property} or the \emph{shadowing
property}. In this paper, we establish full topological entropy
and full Hausdorff dimension of the irregular set for abundant
examples of dynamics without such properties, but well
approximated by some hyperbolic structures. To our best knowledge,
this is the first systematic investigation of such phenomena
beyond the specification-like or shadowing property.

{In Section \ref{sec2}, we study the topological entropy of
irregular sets. In lower dimensions, we show that $I(f)$ has full
topological entropy when $f$ is a continuous map
or a piecewise monotonic map of a compact
interval (Theorem~\ref{thm-entropy-one-dimensional}) and when $f$
is  a $C^{1+\alpha}$-diffeomorphism of a compact surface
(Corollary~\ref{thm-entropy-two-dimensional}). We obtain the same
result for some interesting higher dimensional
systems. These include partially hyperbolic diffeomorphisms
having a one-dimensional center direction and minimal strong
foliations, partially hyperbolic diffeomorphisms on three dimensional
nilmanifolds,  perturbations of time-one map of Anosov flows,
generic symplectic diffeomorphisms with no dominated splitting and
diffeomorphisms isotopic to Anosov (see~\S\ref{Examples in higher
dimensions}).

In Section \ref{sec3}, we study the Hausdorff dimension of  irregular sets.
We review the known results in lower dimensions on surfaces and
in parametric families, concluding that $I(f)$ has full Hausdorff dimension
in these examples. In Section~\ref{ss:0219b} we look at skew-products over the
symbolic shift with interval fibers. These examples are related to
partially hyperbolic sets, such as blenders and porcupines.
Afterward, we study mostly contracting partially hyperbolic systems
(\S\ref{mostlycontracting}) and systems having no dominated splitting
(\S\ref{nodomination}), where we conclude full Hausdorff dimension of
the irregular set in the generic context.

In Section \ref{s:Lorenz}, we prove that the irregular set of
geometric Lorenz flows $C^1$-robustly has full topological entropy
and full Hausdorff dimension  (Theorem~\ref{prop:0904}). The
longer and more technical proofs  are collected   in
Section~\ref{s:proof} in order not to obscure the main ideas of
this paper.

\section{Systems with the irregular set having full topological entropy}
\label{sec2}

Since the irregular set  $I(f)$ is not necessarily compact,  we need to adapt the notion of
  topological entropy   to this setting.
In what follows, as in the previous literature \cite{BS00,T10}, we consider the topological entropy $h_{\mathrm{top}}(A,f)$ in the sense of Pesin and Pitskel'~\cite{PP84}.
This entropy is defined for a continuous map $f$ of a totally bounded metric space $X$ and an  invariant subset $A$ as a \emph{Carath\'eodory characteristic of dimension type} and we refer to~\cite{Pesin98} for the precise definition.
Observe that this notion  allows us to deal also with a discontinuous map $f: X\to X$ by restricting $f$  to $Y=X\setminus \{f^{-n}(Z): n\geq 0\}$  where $Z$ is the set of discontinuity points.
The variational principle also holds for this topological entropy
and, in the case that $f$ is continuous, $h_{\mathrm{top}}(A,f)$ coincides with the \emph{Bowen-Hausdorff topological entropy} of a (not necessarily compact) set $A$  introduced by Bowen in~\cite{Bow73}.
In particular, when $A$ is a compact subset of $X$,  $h_{\mathrm{top}}(A,f)$ coincides with the usual topological entropy (i.e.~the Bowen-Dinaburg topological entropy \cite{Dinaburg70,bowen1971entropy}).
We simply denote $h_{\mathrm{top}}(X,f)$ by $h_{\mathrm{top}}(f)$.

\subsection{Approximation of entropy by horseshoes}\label{S:2.1}

We start by the following basic observation, which involves
invariant compact sets with the specification property
(see definition
in~\cite[Def.~3]{kwietniak2016panorama}).

\begin{prop} \label{prop:1}
Let $f$ be a continuous map of a totally bounded metric space $X$.
Then,
$$
   h_{\mathrm{top}}(I(f),f) \geq \sup  h_{\mathrm{top}}(\Lambda,f)
$$
where the supremum is with respect to all compact subsets $\Lambda$ of $X$, which are invariant and satisfy the specification property for some iterate of $f$.
\end{prop}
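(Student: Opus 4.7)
My plan is to apply the classical Takens--Verbitskiy--Thompson construction to each admissible $\Lambda$ and transfer the resulting full-entropy irregular set from $(\Lambda,f^k)$ back to $(X,f)$ by Tietze extension and monotonicity of Pesin--Pitskel entropy. Fix $\Lambda$ compact, $f$-invariant, with the $f^k$-specification property for some $k\geq 1$; it suffices to show $h_{\mathrm{top}}(I(f),f)\geq h_{\mathrm{top}}(\Lambda,f)$.

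First I would pick two $f$-ergodic probability measures $\mu_1\neq\mu_2$ on $\Lambda$ with $h(\mu_1,f)$ arbitrarily close to $h_{\mathrm{top}}(\Lambda,f)$ (possible by the variational principle applied to $\Lambda$). Existence of a second ergodic measure $\mu_2$ is automatic unless $f|_\Lambda$ is uniquely ergodic; but in that case, $f^k$-specification yields dense $f^k$-periodic (hence $f$-periodic) orbits in $\Lambda$, each carrying its own $f$-invariant atomic measure, so uniqueness forces $\Lambda$ to be a single $f$-periodic orbit and $h_{\mathrm{top}}(\Lambda,f)=0$, making the claim trivial. Choose a continuous $\phi:\Lambda\to\R$ with $\int\phi\, d\mu_1\neq\int\phi\, d\mu_2$ and extend it to a continuous $\tilde\phi:X\to\R$ via Tietze.

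Next I would build a ``Moran-type'' subset $E\subseteq\Lambda$ by using $f^k$-specification to glue alternating long $f^k$-orbit segments of $\mu_1$-generic and $\mu_2$-generic points, with segment lengths $(n_\ell)_{\ell\geq 1}$ growing so rapidly that the $\mu_1$-segments asymptotically dominate in total length. Uniform continuity of $f^j$ for $j=0,\dots,k-1$ lifts $f^k$-shadowing to $f$-shadowing inside each segment, so for every $z\in E$ the $f$-Birkhoff averages $\tfrac{1}{n}\sum_{i=0}^{n-1}\tilde\phi(f^i z)$ oscillate infinitely often between values arbitrarily close to $\int\phi\, d\mu_1$ and to $\int\phi\, d\mu_2$; hence $E\subseteq I(f)$. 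The standard Moran-cover counting argument, combined with the Katok/Brin--Katok formula for $h(\mu_1,f)$, yields $h_{\mathrm{top}}(E,f)\geq h(\mu_1,f)-\eps$ for any prescribed $\eps>0$. Monotonicity of Pesin--Pitskel entropy gives $h_{\mathrm{top}}(I(f),f)\geq h_{\mathrm{top}}(E,f)\geq h(\mu_1,f)-\eps$; letting $\eps\to 0$ and $h(\mu_1,f)\to h_{\mathrm{top}}(\Lambda,f)$, then taking the supremum over $\Lambda$, finishes the proof.

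The technical heart of the argument is the entropy lower bound for $E$: one must verify that the Moran construction preserves enough $(n,\eps)$-separation to recover $h(\mu_1,f)$ despite alternating between two measures and paying the bounded specification gaps. This is precisely the estimate carried out in \cite{T10}, so I would cite it rather than reprove the counting. The only novelty over the $k=1$ case is the simple observation that $f^k$-specification becomes genuine $f$-shadowing on each length-$k$ sub-window by uniform continuity of $f$.
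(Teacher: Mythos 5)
Your proof is correct, but it follows a genuinely different route from the paper's. The paper argues in a short chain: identify $I(f^k|_\Lambda)=I(f^k)\cap\Lambda$ via Tietze extension, invoke \cite[Corollary~3.11]{EKL05} as a black box to get $h_{\mathrm{top}}(I(f^k|_\Lambda),f^k)=h_{\mathrm{top}}(\Lambda,f^k)$ for a system with specification, and then transfer to $f$ using the inclusion $I(f^k)\subset I(f)$ together with Bowen's iterate formula $h_{\mathrm{top}}(A,f^k)=k\,h_{\mathrm{top}}(A,f)$. You instead re-derive the full-entropy-irregular-set theorem from scratch: you build a Moran-type set $E\subset\Lambda\cap I(f)$ by gluing $f^k$-orbit blocks of generic points for two measures, and you absorb the $f^k\!\to\! f$ transfer into the construction via uniform continuity of $f^j$, $j<k$, so you never need to separately prove $I(f^k)\subset I(f)$ or invoke the iterate formula. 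Your explicit handling of the uniquely ergodic degenerate case is a nice touch that the paper silently delegates to \cite{EKL05}. The trade-off is that the paper's proof is substantially shorter, and both arguments ultimately rest on the same Moran-counting technique (you cite \cite{T10}, the paper cites \cite{EKL05}; these are two incarnations of the same Takens--Verbitskiy-type estimate). One small point to watch in your version: the measures $\mu_i$ are $f$-ergodic but need not be $f^k$-ergodic, so for the gluing step you should pass to an $f^k$-ergodic component $\nu_1$ of $\mu_1$ and use $f^k$-blocks of $\nu_1$-generic points; such a point is still $\mu_1$-generic for $f$ because $\frac1k\sum_{r=0}^{k-1}(f^r)_*\nu_1=\mu_1$, so the $f$-Birkhoff averages of $\tilde\phi$ along each block still approach $\int\phi\,d\mu_1$, and the rest of the argument goes through.
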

\begin{proof}
Assume that $\Lambda$ is a compact $f^k$-invariant subset of $X$ with the specification property for some $k\geq 1$.
It is easy to see that $I(f^k) \subset I(f)$\footnote{{This can be seen by using the following decomposition which relates the finite Birkhoff sum of $f$ with that of $f^k$ and then analyzing the limits inferior and superior,
$$
\frac{1}{n} \sum_{i=0}^{n-1} \phi(f^i(x)) = \frac{\ell_n}{n} \cdot \frac{1}{\ell_n} \sum_{j=0}^{\ell_n-1} \phi(f^{jk}(x)) + \frac{1}{n} \sum_{r=1}^{k-1} \sum_{j=0}^{\ell_n-1} \phi(f^{jk+r}(x)) + \frac{1}{n}\sum_{r=0}^{r_n} \phi(f^{\ell_n k +r}(x))
$$
where $0\leq r_n <k$ and $\ell_nk+r_n=n-1$.
}}.
Also, observe that in general, one has that $I(f^k)\cap \Lambda \subset I(f^k|_{\Lambda})$.
On the left-hand of the inclusion, we consider the time average of real-valued continuous observables $\phi$ on $X$, while on the right-hand, observables are only defined on $\Lambda$.
However, since $\Lambda$ is a closed set of $X$, by Tietze extension theorem we can extend continuously the observables to the whole space $X$ and get the other inclusion.
Thus, we have that $I(f^k|_{\Lambda})=I(f^k)\cap \Lambda$.
Hence, according to~\cite[Corollary~3.11]{EKL05} it follows that
\begin{equation} \label{eq:z}
h_{\mathrm{top}}(\Lambda,f^k) = h_{\mathrm{top}}(I(f^k|_{\Lambda}),f^k) =
h_{\mathrm{top}}(I(f^k)\cap \Lambda,f^k) \leq h_{\mathrm{top}}(I(f)\cap
\Lambda,f^k).
\end{equation}
On the other hand, from \cite[Proposition~2(d)]{Bow73}, we get
\begin{equation} \label{eq:zz}
h_{\mathrm{top}}(I(f)\cap \Lambda,f^k)=k\cdot h_{\mathrm{top}}(I(f)\cap \Lambda,f)
\quad  \text{and}  \quad h_{\mathrm{top}}(\Lambda,f^k)= k\cdot
h_{\mathrm{top}}(\Lambda,f).
\end{equation}
Putting together~\eqref{eq:z} and~\eqref{eq:zz} we obtain that
$$
    h_{\mathrm{top}}(\Lambda,f) \leq   h_{\mathrm{top}}(I(f)\cap \Lambda,f)\leq h_{\mathrm{top}}(I(f),f)
$$
which completes the proof.
\end{proof}

We shall apply the above basic fact to a diffeomorphism $f$ of a
compact manifold $M$ as follows. Recall that an ergodic
$f$-invariant probability measure $\mu$ is said to be
\emph{hyperbolic} if  all of its Lyapunov exponents  are nonzero.
On the other hand,  an \emph{elementary horseshoe} of $f$ is a
Cantor set $\Lambda$ which is a topologically mixing locally
maximal invariant hyperbolic set for some iterate of $f$. Let $f$
be a $C^{1+\alpha}$-diffeomorphism (i.e.~a $C^1$-diffeomorphism
with $\alpha$-H\"older derivative)
and consider  a hyperbolic ergodic $f$-invariant probability
measure $\mu$. Then, according to \cite[Theorem 1]{G16} and
\cite{KH95}, there is a sequence of elementary horseshoes
$(\Lambda_n)_{n\geq 1}$ of $f$ such that
$h_{\mathrm{top}}(\Lambda_n,f) \to h_{\mu}(f)$ as $n\to\infty$
where $h_\mu(f)$ denotes the (Kolmogorov-Sinai) metric entropy of
$\mu$. Since $\Lambda_n$ is an elementary horseshoe (and in
particular expansive) for some iterate $f^k$ of $f$,
then the specification property holds on $\Lambda_n$ (for $f^k$)~\cite{denker}.
Hence, from Proposition~\ref{prop:1} it follows that
\begin{equation} \label{eq:1}
    h_{\mathrm{top}}(I(f),f)\geq h_\mu(f).
\end{equation}
In light of the above observation, we  introduce the  \emph{hyperbolic entropy} $H(f)$ of $f$ as
\[
H(f) =\sup \big\{ h_\mu(f) :   \text{$\mu$ is a  hyperbolic
ergodic $f$-invariant probability measure}\big\}
\]
and get the following.

\begin{cor} \label{cor:1}
Let $f$ be a $C^{1+\alpha}$-diffeomorphism of a compact manifold
$M$. Then, the topological entropy of the irregular set  of $f$ is
bounded from below by the hyperbolic entropy, that~is,
\[
h_{\mathrm{top}}(I(f),f) \geq  H(f).
\]
\end{cor}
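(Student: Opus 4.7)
The plan is to observe that essentially all the work has already been carried out in the discussion preceding the statement: the bound~\eqref{eq:1} says precisely that $h_{\mathrm{top}}(I(f),f)\geq h_\mu(f)$ for every hyperbolic ergodic $f$-invariant probability measure $\mu$, and the corollary follows by taking the supremum over all such $\mu$. So the proof is a one-line deduction, and the substantive task is to make sure the chain of implications leading to~\eqref{eq:1} is correctly invoked.

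More precisely, I would fix an arbitrary hyperbolic ergodic $f$-invariant probability measure $\mu$ and retrace the argument: by the Katok--Gelfert approximation theorem (Theorem~1 of~\cite{G16}, with the classical Katok horseshoe construction~\cite{KH95} as input), there is a sequence of elementary horseshoes $(\Lambda_n)_{n\geq 1}$ of $f$ with $h_{\mathrm{top}}(\Lambda_n,f)\to h_\mu(f)$. Each $\Lambda_n$ is locally maximal, hyperbolic, topologically mixing and expansive for some iterate $f^{k_n}$, so by Denker's theorem~\cite{denker} it has the specification property for $f^{k_n}$. Proposition~\ref{prop:1} therefore yields
\[
h_{\mathrm{top}}(I(f),f)\geq h_{\mathrm{top}}(\Lambda_n,f)\quad\text{for every }n,
\]
and letting $n\to\infty$ gives $h_{\mathrm{top}}(I(f),f)\geq h_\mu(f)$. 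Taking the supremum over $\mu$ produces the desired inequality $h_{\mathrm{top}}(I(f),f)\geq H(f)$.

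The only point that requires any thought is to confirm that Proposition~\ref{prop:1} is applicable to each $\Lambda_n$ individually, since the value of $k_n$ (the iterate for which the specification property holds) may vary with~$n$. But the proposition is stated uniformly over all compact invariant subsets with the specification property for some iterate, so this causes no difficulty. I do not foresee any technical obstacle: the heavy lifting has been done in establishing Proposition~\ref{prop:1} and in quoting the Katok--Gelfert approximation, and the corollary is essentially an immediate packaging.
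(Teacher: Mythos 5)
Your proof is correct and follows exactly the same path as the paper: invoke the Katok--Gelfert horseshoe approximation to get elementary horseshoes $\Lambda_n$ with entropy converging to $h_\mu(f)$, note each satisfies specification for some iterate, apply Proposition~\ref{prop:1} to each, pass to the limit, and take the supremum over hyperbolic $\mu$. Your remark that the iterate $k_n$ may vary with $n$ is a correct and harmless observation, handled as you say by the uniform statement of the proposition.
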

{We observe that the above result already appeared in~\cite[Corollary B]{DT17}, nevertheless we decided to include the previous proof for the completeness of the article}.

{
\begin{rem} \label{rem_dominated}
Corollary~\ref{cor:1} can be similarly stated for $C^1$-diffeomorphisms having a dominated splitting by using a version of the theorem of Katok on approximation by horseshoes obtained in~\cite{G16} (see also \cite{ST10}).
To do this, it requires assuming that
the hyperbolic measures have an Oseledets splitting coinciding with the dominated splitting in the hypothesis.
 \end{rem}
}

Thus, one approach to obtain full topological entropy of the irregular set $I(f)$ is to find hyperbolic measures whose metric entropy approaches the topological entropy
of the whole space $X$.
Similar estimates  can also be obtained for the topological pressure of  irregular sets  using the arguments and the results in~\cite{T10} and~\cite{sanchez2017approximation}.

\subsection{Examples in dimension one}
Recently, the second author of this paper obtained in~\cite{NY20} the following result.
The irregular set of a piecewise monotonic map of a compact interval  has full topological entropy if $f$ is transitive and the set of periodic measures of $f$ is dense in the set of ergodic measures of $f$.
Recall that a map $f:I\to I$ is said to be a \emph{piecewise monotonic map} of a compact interval $I$ if there exists a partition of $I$ into finitely many pairwise disjoints intervals
$I_1,\dots,I_k$ such that $f|_{I_j}$ is strictly monotone and continuous for $j=1,\dots,k$.
Here, we extend this result to \emph{any} continuous map and \emph{any} piecewise monotonic map of a compact interval.


\begin{thm} \label{thm-entropy-one-dimensional}
Let $f$ be either a continuous or piecewise monotonic map of a compact interval.
Then, the irregular set has full topological entropy, that is,
$h_{\mathrm{top}}(I(f),f)=h_{\mathrm{top}}(f).$
\end{thm}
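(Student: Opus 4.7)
The plan is to combine Proposition~\ref{prop:1} with the classical fact that for one-dimensional maps the topological entropy is approximated from below by entropies of horseshoes. The upper bound $h_{\mathrm{top}}(I(f),f) \leq h_{\mathrm{top}}(f)$ is immediate. If $h_{\mathrm{top}}(f)=0$ there is nothing more to prove, so I assume $h_{\mathrm{top}}(f)>0$.

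The key input I would invoke is the following approximation lemma: for every $\varepsilon>0$ there exist an integer $k\geq 1$ and a compact $f^{k}$-invariant set $\Lambda$ such that $f^{k}|_{\Lambda}$ is topologically conjugate to the full shift on finitely many symbols and $h_{\mathrm{top}}(\Lambda,f) > h_{\mathrm{top}}(f)-\varepsilon$. For continuous interval maps this is Misiurewicz's classical theorem on approximation of the topological entropy by horseshoes, while for piecewise monotonic maps it can be extracted from Hofbauer's Markov diagram, which produces generating topological Markov chains whose entropies exhaust $h_{\mathrm{top}}(f)$ and hence yield such horseshoes. In either case $\Lambda$ is expansive and topologically mixing for $f^{k}$, so Bowen's theorem (as already used in the discussion after Proposition~\ref{prop:1}) guarantees that $\Lambda$ enjoys the specification property for $f^{k}$.

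With this $\Lambda$ at hand, Proposition~\ref{prop:1} yields
\[
    h_{\mathrm{top}}(I(f),f) \;\geq\; h_{\mathrm{top}}(\Lambda,f) \;>\; h_{\mathrm{top}}(f)-\varepsilon,
\]
and letting $\varepsilon\to 0$ delivers the theorem.

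The main technical obstacle lies in the piecewise monotonic case, where $f$ is only continuous on $Y = X\setminus\bigcup_{n\geq 0}f^{-n}(Z)$, with $Z$ the finite set of discontinuity points. Two points need to be checked carefully: (i) that $h_{\mathrm{top}}(f|_{Y}) = h_{\mathrm{top}}(f)$, so that the approximation is really of the full entropy; and (ii) that the horseshoes produced by the Markov-diagram construction can be realized inside $Y$, so that Proposition~\ref{prop:1} applies as stated (recall that the discussion of entropy for possibly discontinuous maps in the beginning of Section~\ref{sec2} is precisely via the restriction to $Y$). Both properties are standard consequences of the Hofbauer tower construction, but verifying them carefully — in particular arranging that the horseshoe lies at a bounded tower level and therefore stays uniformly away from $Z$ — is the step requiring the most work.
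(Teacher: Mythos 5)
There is a genuine gap, and it is precisely the issue the paper's proof is designed to work around. Your plan hinges on the claim that Misiurewicz's theorem (resp.\ the Hofbauer construction) produces a compact $f^k$-invariant set $\Lambda$ such that $f^k|_\Lambda$ is \emph{topologically conjugate} to a full shift, from which specification and Proposition~\ref{prop:1} would follow. But this is not what Misiurewicz's theorem delivers: the version used in the paper (Theorem~5.15 of \cite{Rue17}) yields a compact $f^k$-invariant Cantor set $X$ together with a continuous map $\varphi: X \to \Sigma$ that is a \emph{semi-conjugacy} from $f^k|_X$ \emph{onto} the full shift, one-to-one off a countable $f^k$-invariant set $E$ and two-to-one on $E$. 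In other words $f^k|_X$ is an \emph{extension} (not a factor, and not a conjugate) of the full shift, and the specification property is preserved under passing to factors but not a priori under passing to extensions. The paper is explicit about exactly this obstruction ("we do not know if $X$ has specification property for $f^k$ \dots thus, we cannot directly apply the argument in Subsection~\ref{S:2.1}"), so appealing directly to Proposition~\ref{prop:1} is not justified. Without a separate argument that the coding can be made injective (or that $X$ nonetheless enjoys specification), your proof does not close.

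The paper's actual route is different in a substantive way: it does \emph{not} try to upgrade the semi-conjugacy to a conjugacy or to establish specification. Instead it removes the countable defect set, obtaining a bijection $\varphi: X_0 = X\setminus E \to \Sigma_0 = \Sigma \setminus \varphi(E)$ which is a genuine topological conjugacy between the (non-compact) restricted systems. Using that the topological entropy (in the Bowen/Pesin–Pitskel' sense) of a countable set is zero, that this entropy is a conjugacy invariant, and that $h_{\mathrm{top}}(I(\sigma),\sigma)=h_{\mathrm{top}}(\sigma)=\log p$ by \cite{FFW01}, together with the transfer Lemma~\ref{lem:0210b}, the paper deduces directly that $h_{\mathrm{top}}(X\cap I(f),f)\geq \log p / k$ (Lemma~\ref{c:0901}), circumventing specification altogether. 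This is the key idea missing from your proposal. A secondary, smaller point: for the piecewise monotone case the paper explicitly does \emph{not} go through the Hofbauer diagram (that is the earlier approach of \cite{NY20}); it finds the horseshoes for $f$ itself, so that the entire argument is uniform across both cases.
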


The proof of Theorem \ref{thm-entropy-one-dimensional} will be given in Section \ref{S:5.1}.
In the proof, we will  use the previously mentioned idea of approximation of the topological entropy of $f$ by horseshoes, which is now achieved   by Misiurewicz's theorem.
We remark that these horseshoes are of $f$ and not of a lift of $f$ (that is, the Hofbauer's  Markov diagram of $f$) as in the approach of~\cite{NY20}.


\subsection{Examples in dimension two}
By Ruelle's inequality in dimension two, any ergodic invariant
probability measure of a $C^1$-diffeomorphism with positive metric
entropy is hyperbolic. Thus, by the variational principle the
topological entropy of any surface $C^1$-diffeomorphism $f$
coincides with the hyperbolic entropy of $f$. Hence,
Corollary~\ref{cor:1} implies that the irregular set of $f$ has
full topological entropy if $f$ is a
$C^{1+\alpha}$-diffeomorphism.

Moreover, one can restrict the attention to any  homoclinic class
$H(\Or)$ with positive topological entropy. Recall that a
homoclinic class of a hyperbolic periodic orbit $\Or$ is the
closure of the transverse intersections between the stable and
unstable manifold of $\Or$. Any homoclinic class is a compact
transitive invariant set. As a consequence again of the Ruelle
inequality and the variational principle for the restriction of
$f$ to $H(\Or)$ we obtain that $h_{\mathrm{top}}(H(\Or),f)$ can be
approximated by the metric entropy $h_\mu(f)$ of a hyperbolic
invariant measure $\mu$ supported on $H(\Or)$.
Thus,  from
 Corollary~\ref{cor:1} and the fact that $I(f\vert _{H(\Or)} ) =I(f) \cap H(\Or)$
it follows that $I(f)$ has full topological entropy in any homoclinic class $H(\Or)$,
that is,
$$h_{\mathrm{top}}(I(f)\cap H(\Or),f)=h_{\mathrm{top}}(H(\Or),f).$$
%
To summarize the above arguments, we get  the following:
\begin{cor} \label{thm-entropy-two-dimensional}
Let $f$ be a $C^{1+\alpha}$-diffeomorphism
of a compact surface. Then the  irregular set $I(f)$ has full
topological entropy. Moreover, $I(f)$ has full topological entropy
in any homoclinic class $H(\Or)$.
\end{cor}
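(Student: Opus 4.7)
The plan is to combine Corollary~\ref{cor:1} with Ruelle's inequality in dimension two and the variational principle, first globally and then within a homoclinic class.

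For the global statement, I first invoke the variational principle to produce a sequence of ergodic $f$-invariant probability measures $(\mu_n)$ with $h_{\mu_n}(f) \to h_{\mathrm{top}}(f)$. If $h_{\mathrm{top}}(f) = 0$ the claim is trivial, so I assume $h_{\mathrm{top}}(f) > 0$ and $h_{\mu_n}(f) > 0$ for large $n$. The key input is Ruelle's inequality on a surface: applied to $f$ it forces the top Lyapunov exponent to be strictly positive whenever $h_{\mu_n}(f) > 0$, and applied to $f^{-1}$ (using $h_{\mu_n}(f)=h_{\mu_n}(f^{-1})$) it forces the bottom Lyapunov exponent to be strictly negative. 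Hence every $\mu_n$ is hyperbolic, so $H(f) \geq h_{\mu_n}(f) \to h_{\mathrm{top}}(f)$. Combining with Corollary~\ref{cor:1} yields
\[
h_{\mathrm{top}}(I(f),f) \geq H(f) = h_{\mathrm{top}}(f),
\]
and the reverse inequality is automatic from the monotonicity of the Pesin--Pitskel' entropy.

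For the homoclinic class statement, I rerun the same argument on the compact, transitive, $f$-invariant set $H(\Or)$. The variational principle applied to the restriction $f|_{H(\Or)}$ gives ergodic measures $\mu_n$ supported on $H(\Or)$ with $h_{\mu_n}(f) \to h_{\mathrm{top}}(H(\Or),f)$, and again Ruelle's inequality on the surface makes them hyperbolic. Katok's approximation theorem then produces elementary horseshoes $\Lambda_n$ with $h_{\mathrm{top}}(\Lambda_n,f) \to h_{\mu_n}(f)$, and the argument in the proof of Proposition~\ref{prop:1}, together with the identity $I(f|_{H(\Or)}) = I(f) \cap H(\Or)$ already noted there, gives
\[
h_{\mathrm{top}}(I(f)\cap H(\Or),f) \geq h_{\mathrm{top}}(\Lambda_n,f) \longrightarrow h_{\mathrm{top}}(H(\Or),f).
\]

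The only point requiring care, and the expected main obstacle, is ensuring that the Katok horseshoes $\Lambda_n$ actually sit inside $H(\Or)$ and not merely in the ambient manifold $M$. This is handled by recalling that Katok's construction produces $\Lambda_n$ as the hyperbolic continuation of a transverse homoclinic intersection of a periodic orbit $\Or_n$ lying in the support of $\mu_n$; since $\mathrm{supp}(\mu_n) \subset H(\Or)$, the orbit $\Or_n$ is homoclinically related to $\Or$, so $\Lambda_n \subset H(\Or)$. With this in hand the two bounds combine to give $h_{\mathrm{top}}(I(f)\cap H(\Or),f) = h_{\mathrm{top}}(H(\Or),f)$, completing the proof.
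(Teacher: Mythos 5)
Your argument follows the same route as the paper's for both parts: the variational principle together with Ruelle's inequality (applied to $f$ and $f^{-1}$) shows that the hyperbolic entropy equals the topological entropy on a surface, and Corollary~\ref{cor:1} then does the rest; for the homoclinic class you run the same argument on the compact transitive set $H(\Or)$ and use the identity $I(f|_{H(\Or)}) = I(f)\cap H(\Or)$, exactly as in the text. So the structure is correct and matches.

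The one place where you go beyond the paper's sketch is in justifying that the Katok horseshoes $\Lambda_n$ actually lie inside $H(\Or)$, which is indeed the subtle point hidden in the phrase ``from Corollary~\ref{cor:1}'' when applied to $f|_{H(\Or)}$. Your justification is not quite right, however. First, the periodic orbits produced by Katok's closing lemma do not lie in $\operatorname{supp}(\mu_n)$: they \emph{shadow} recurrent orbits in a Pesin block, so a priori they need not lie in $H(\Or)$ at all (Katok's theorem only guarantees the horseshoe lives in a small neighborhood of $\operatorname{supp}(\mu_n)$, and $H(\Or)$ is generally not locally maximal). Second, even if a periodic orbit $\Or_n$ does lie in $H(\Or)$, membership in a homoclinic class does not by itself imply homoclinic relation to $\Or$; that implication is automatic only $C^1$-generically, not for a fixed $C^{1+\alpha}$ map. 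What one actually needs is that the closing-lemma orbits have transversal homoclinic connections with the Pesin stable/unstable manifolds of $\mu$-typical points, which, combined with the density of $W^s(\Or)\pitchfork W^u(\Or)$ in $H(\Or)$ and the inclination lemma, yields the homoclinic relation to $\Or$. The paper itself leaves this implicit and points to \cite{DT17} and \cite{S20} for the detailed treatment, so the omission is shared, but as written your explanation of the inclusion $\Lambda_n\subset H(\Or)$ would not hold up.
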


We were recently informed that the above result was also obtained
by D.~Sanhueza in his Ph.D.~thesis~\cite{S20}. See also~\cite[Corollary~C]{DT17}.

%

\subsection{Examples in higher dimensions}
\label{Examples in higher dimensions}

In what follows, we describe some interesting examples in dimension greater than two whose irregular sets have full topological entropy.

\subsubsection{Non-hyperbolic homoclinic classes} \label{sec:foliation}
First, we focus on the case of non-hyperbolic homoclinic classes
with index-variation, {a question that was asked}
in~\cite[Question~1]{BKNRS20}. Let $M$ be a compact manifold with
$\dim (M) \geq 3$ and denote by $\mathrm{Diff}^1(M)$ the set of
$C^1$-diffeomorphisms of $M$. Following the recent
works~\cite{DGS19,YZ19} we  consider the open set $\U \subset
\mathrm{Diff}^1(M)$  consisting of partially hyperbolic
systems\footnote{A diffeomorphism $f$ is called partially
hyperbolic if there exists a decomposition $TM=E^s\oplus E^c\oplus
E^u$ of the tangent bundle into three continuous invariant
sub-bundles such that (i) $Df\lvert E^s$ is uniformly contracting,
$Df\lvert E^u$ is uniformly expanding and (ii) for any unit
vectors $v^s\in E^s$, $v^c\in E^c$ , $v^u\in E^u$ and any $x\in
M$, we have that $\| Df(x)v^s \|\cdot \| Df(x)v^c \|^{-1}<1/2$,
and $\| Df(x)v^c \|\cdot \| Df(x)v^u \|^{-1}<1/2$.} a with
one-dimensional central bundle, having minimal strong stable and
unstable foliations and a pair of hyperbolic periodic points $P$
and $Q$ with different indices (dimension of the stable bundle).
In particular, these systems are $C^1$-robustly transitive and
non-hyperbolic. Moreover, there exists an open and dense subset
$\V$ of $\U$ such that for any $f\in\V$, it holds that
$H(P)=H(Q)=M$.
This kind of diffeomorphisms is quite
abundant among partially hyperbolic skew-products in dimension 3
having circles as the central fibers. The authors in~\cite{DGS19,YZ19} proved
that for all $f\in\V$ the hyperbolic entropy  of $f$ is equal to
the topological entropy. Consequently, it follows from
{Corollary~\ref{cor:1}, Remark~\ref{rem_dominated}} that the irregular set $I(f)$ has full
topological entropy for any $f\in\mathcal{V}$.

\subsubsection{Volume-preserving or symplectic diffeomorphisms with no dominated splitting} \label{sec:simpletic}
Let $\omega$ be a volume or a symplectic form of a compact
manifold $M$. We denote by $\Ee_{\omega}(M )$ the interior of the
set of $C^1$ diffeomorphisms which preserve $\omega$ and  having
no dominated splitting\footnote{A splitting $T_xM =E(x)\oplus
F(x)$ is called dominated if it is invariant under the derivative,
varies continuously with $x$, and there exists $C>0$, $\lambda< 1$
such that for every $x\in M$ and every pair of unit vectors $u\in
E(x), v\in F(x)$, one has $\| Df^n(x)u \|\cdot\| Df^n(x)v
\|^{-1}<C\lambda^n$, for every $n\geq 1$
(cf.~\cite{bonatti2006dynamics}).}. Buzzi, Crovisier and Fisher in
\cite{BCF18} proved that the topological entropy of a generic
$f\in\Ee_{\omega}(M)$ is the supremum of the topological entropy
of horseshoes of $f$. Although a priori these horseshoes $\Lambda$
need not be elementary, one can use the Spectral Decomposition
Theorem~\cite{KH95} and obtain an elementary subhorseshoe
$\Lambda'$ having the entropy of $\Lambda$. Thus, we can always
approximate $h_{\mathrm{top}}(f)$ by the topological entropy of
elementary horseshoes of $f$. Therefore, Proposition~\ref{prop:1}
concludes that  the irregular set of generic diffeomorphisms $f\in
\Ee_\omega(M)$ has full topological entropy.

\subsubsection{Diffeomorphisms of tori isotopic to Anosov} \label{sec:isotopic}
The well-known DA (derived from Anosov)-example by Ma\~{n}\'{e}
(\cite{bonatti2006dynamics}) of a partially hyperbolic
diffeomorphism $f$ on the torus $\T^3$ has a unique measure of
maximal entropy. This measure is in fact a hyperbolic measure.
This example was extended by Ures~\cite{U12} to all absolutely
partially hyperbolic diffeomorphisms of $\T^3$ which are isotopic
to a linear Anosov diffeomorphism.
 In higher dimensions, Fisher, Potrie and Sambarino~\cite{FPS14} proved that every partially hyperbolic diffeomorphism of $\T^d$ which is isotopic to a linear Anosov diffeomorphism along a path of partially hyperbolic diffeomorphisms with a one-dimensional center bundle
has a unique measure of maximal entropy.
 In~\cite{R16}, the last author of this paper proved that this measure is in fact a hyperbolic measure.
 Thus, it follows  from Corollary~\ref{cor:1}  that the irregular set of such systems has full topological entropy.
 Furthermore, for this type of diffeomorphisms of $\T^d$, but now assuming a compact two-dimensional center direction and $C^2$-regularity of the diffeomorphism, in \cite{Alv20} the author proved the hyperbolicity of any maximal entropy measure.
 Hence, again by Corollary~\ref{cor:1},
we may conclude full entropy of the irregular set.

\subsubsection{Trivial factors over Anosov: Diffeomorphisms of 3-dimensional nilmanifolds} \label{sec:nilmanifolds}
Let us say that a diffeomorphism $f$ of a compact manifold $M$
\emph{trivially factors over Anosov} if there exists an Anosov
map $A:\mathbb{T}^d\to \mathbb{T}^d$ on the torus $\mathbb{T}^d$, and a
continuous surjective map $\pi: M\to \mathbb{T}^d$ such that $\pi
\circ f = A\circ \pi$ and $h_{\mathrm{top}}(\pi^{-1}(y),f)=0$ for
all $y\in \mathbb{T}^d$.

\begin{prop} \label{pro-new} If $f$ is a $C^1$-diffeomorphism that trivially factors
over an Anosov $A$ then $h_{\mathrm{top}}(
I(f),f)=h_{\mathrm{top}}(f)=h_{top}(A)$.
\end{prop}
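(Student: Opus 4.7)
The plan is to reduce everything to the Anosov base $A$. The upper bound $h_{\mathrm{top}}(I(f),f)\leq h_{\mathrm{top}}(f)$ is automatic from monotonicity of the Bowen--Hausdorff entropy, so I need $h_{\mathrm{top}}(I(f),f)\geq h_{\mathrm{top}}(A)$ together with the identification $h_{\mathrm{top}}(f)=h_{\mathrm{top}}(A)$. The latter follows from Bowen's factor-map inequality for continuous semi-conjugacies,
\[
h_{\mathrm{top}}(f)\;\leq\; h_{\mathrm{top}}(A) + \sup_{y\in \T^d} h_{\mathrm{top}}(\pi^{-1}(y),f),
\]
whose right-hand side equals $h_{\mathrm{top}}(A)$ by the zero-fiber-entropy hypothesis, while $h_{\mathrm{top}}(f)\geq h_{\mathrm{top}}(A)$ is immediate from $\pi \circ f = A\circ \pi$ and uniform continuity of $\pi$ (a Bowen ball cover of $M$ projects to a Bowen ball cover of $\T^d$ of the same cardinality, at possibly slightly larger radius).

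Next I will show $\pi^{-1}(I(A))\subseteq I(f)$. Given $y\in I(A)$ witnessed by some $\psi\in C(\T^d)$, set $\phi := \psi\circ \pi\in C(M)$. Using $\pi\circ f^i = A^i\circ \pi$,
\[
\frac{1}{n}\sum_{i=0}^{n-1}\phi(f^i(x))\;=\;\frac{1}{n}\sum_{i=0}^{n-1}\psi(A^i(y))\qquad\text{for every } x\in \pi^{-1}(y),
\]
so the $f$-Birkhoff averages of $\phi$ at $x$ diverge whenever those of $\psi$ at $y$ do, forcing $x\in I(f)$.

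To propagate the entropy I will invoke the standard monotonicity of Bowen--Hausdorff entropy under continuous semi-conjugacies: by uniform continuity of $\pi$, the $\pi$-image of an $(n,\eta)$-Bowen ball of $f$ is contained in an $(n,\delta)$-Bowen ball of $A$ (with $\delta\to 0$ as $\eta\to 0$), so $h_{\mathrm{top}}(\pi(Z),A)\leq h_{\mathrm{top}}(Z,f)$ for every $Z\subseteq M$. Applied to $Z=\pi^{-1}(I(A))$,
\[
h_{\mathrm{top}}(I(f),f)\;\geq\; h_{\mathrm{top}}(\pi^{-1}(I(A)),f)\;\geq\; h_{\mathrm{top}}(I(A),A).
\]
Finally, since $A$ is an Anosov map of the torus, it is expansive, topologically mixing, and has the shadowing property, hence by Bowen's theorem it satisfies the specification property on $\Lambda=\T^d$. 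Proposition~\ref{prop:1} applied to $A$ with this $\Lambda$ then yields $h_{\mathrm{top}}(I(A),A)\geq h_{\mathrm{top}}(\T^d,A)=h_{\mathrm{top}}(A)$, which closes the chain of inequalities.

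The main technical points deserving care are Bowen's factor-map bound and its companion monotonicity statement applied to the non-compact invariant set $\pi^{-1}(I(A))$: both are classical consequences of how Carath\'eodory-type constructions behave under uniformly continuous maps, but they should be stated in the Pesin--Pitskel' framework adopted in Section~\ref{sec2}. With that understood, the argument uses no regularity of $f$ beyond continuity, which explains why the hypothesis asks only for a $C^1$-diffeomorphism.
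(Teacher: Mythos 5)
Your argument follows the paper's proof almost step by step: the same Bowen factor inequality for $h_{\mathrm{top}}(f)\leq h_{\mathrm{top}}(A)$, the same inclusion $\pi^{-1}(I(A))\subseteq I(f)$ (which you prove directly instead of quoting the paper's Lemma~\ref{lem:0210b}, but the computation is identical), the same monotonicity of Pesin--Pitskel' entropy under the semi-conjugacy $\pi$ (the paper cites~\cite[Prop.~2]{PP84}, you sketch the Bowen-ball argument), and the same invocation of Proposition~\ref{prop:1} to get $h_{\mathrm{top}}(I(A),A)=h_{\mathrm{top}}(A)$ from specification for the Anosov base. The extra step establishing $h_{\mathrm{top}}(f)\geq h_{\mathrm{top}}(A)$ directly is harmless but redundant, since equality already follows from the closed chain $h_{\mathrm{top}}(f)\leq h_{\mathrm{top}}(A)\leq h_{\mathrm{top}}(I(f),f)\leq h_{\mathrm{top}}(f)$.
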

\begin{proof}
According to Bowen~\cite[Thm.~17]{bowen1971entropy} and since the
topological entropy of $f$ restricted to the fibers $\pi^{-1}(y)$ is zero it
holds that
\begin{align*}
h_{\mathrm{top}}(f) \leq h_{top}(A) + \sup_{y\in \mathbb{T}^d}
h_{\mathrm{top}}(\pi^{-1}(A),f)=h_{\mathrm{top}}(A).
\end{align*}
Moreover, since the irregular set $I(A)$ of the Anosov map $A$ has
full topological entropy (actually this is true for
any Axiom A  from Proposition~\ref{prop:1}), we obtain that
$$
    h_{\mathrm{top}}(A) = h_{\mathrm{top}}(I(A),A)
   \leq h_{\mathrm{top}}(\pi^{-1}(I(A)),f) \leq
   h_{\mathrm{top}}(I(f),f).
$$
The last inequality above follows from the inclusion
$\pi^{-1}(I(A)) \subset I(f)$, which is a straightforward
consequence of Lemma~\ref{lem:0210b}. The first inequality follows
from~\cite[Prop.~2]{PP84}. Putting together the above inequalities we
conclude the proof.
\end{proof}


We will now apply the above proposition to partially hyperbolic
diffeomorphisms on a $3$-dimensional nilmanifold $M\neq
\mathbb{T}^3$. Recall that $3$-dimensional nilmanifolds
are circle bundles over the torus $\mathbb{T}^2$. The results
of~\cite{Ham13,HP14} imply that in this case there exists a center
foliation $\mathcal{F}^c$ for $f$ (dynamical coherence) such that
each leaf is a circle tangent to $E^c$. The foliation
$\mathcal{F}^c$ forms a circle bundle and let us consider the
quotient space $M/\mathcal{F}^c$, which is actually the $2$-torus
$\mathbb{T}^2$. Then, the map induced on the quotient space will
be a transitive Anosov homeomorphism which is conjugate to a
linear Anosov diffeomorphism which we denote by $A$. In
particular, we get that $f$ is semi-conjugate to $A$ by means of
the canonical quotient map $\pi$. Since the center leaves are
$1$-dimensional circles restricted to which $f$ is a
homeomorphism, the fibers $\pi^{-1}(y)$
 have zero topological entropy. Thus, $f$ trivially
 factors over Anosov. Consequently, from
 Proposition~\ref{pro-new} we have that any partially hyperbolic
diffeomorphism on a 3-dimensional nilmanifold (other than the
torus $\mathbb{T}^3$) has full topological entropy of the
irregular set.

In dimensions larger than three, the same argument
can be applied to partially hyperbolic $C^1$-diffeomorphisms which are dynamically coherent
and have a 1-dimensional center direction so that the center
foliation $\mathcal{F}^c$ forms a circle bundle. Such diffeomorphisms trivially factor
over Anosov (see for example~\cite[Prop.~7.1]{UVYY20}). Consequently, the
irregular set has full topological entropy.

Finally, we can
apply
the above proposition to
the recent results on partially hyperbolic DA diffeomorphisms with
center bundle of arbitrary dimension in~\cite[Thm.~A. See also
Sec.~2.3 \& Sec.~4]{CLPV21} and obtain full topological entropy of the
irregular set
for this class of systems.


\subsubsection{Perturbations of time-one map of Anosov flows}
A recent theorem of~\cite[Theorem 1.1]{BFT19} describes the following dichotomy for measures of maximal
entropy with respect to perturbations of time-one map of Anosov flows. Let $\phi^t$ be a transitive Anosov flow on a compact manifold $M$. Then, there is an open set $U$ in $\mathrm{Diff}^1(M)$ containing the time-one map $\phi^1$ in its closure, such that for any $f\in U\cap \mathrm{Diff}^2(M)$ it holds that: either (i) there are exactly two hyperbolic measures of maximal entropy or (ii) all measures of maximal entropy have zero central Lyapunov exponents. It is believed that the first case is dense and $C^2$-open in $U$ and there are preliminary announcements of these results by Crovisier and Poletti~\cite[Remark 1.2]{BFT19}. In particular, if $f$ satisfies case (i) then, as before, by using Corollary~\ref{cor:1} we conclude that the irregular set $I(f)$ has full topological entropy.

\section{Systems with  the irregular set of full Hausdorff dimension}
\label{sec3}

 We denote by $\dim_{\mathrm H} A$ the Hausdorff dimension of a set $A$ (see e.g.~\cite{Pesin98} for its definition).
 In what follows, we will study the Hausdorff dimension of $I(f)$.

\subsection{Approximation of Hausdorff dimension by $u$-conformal horseshoes}

Let $\Lambda$ be a locally maximal hyperbolic set of a $C^1$-diffeomorphism $f$ on a compact manifold. Recall that $\Lambda$ is
said to be \emph{basic set} if additionally it is a transitive set
(i.e., if it has a dense orbit). The set $\Lambda$ is said to be
\emph{$u$-conformal} if there exists a continuous function
$a^u:\Lambda \to \mathbb{R}$ such that
$Df(x)|_{E^u(x)}=a^u(x)\cdot \mathrm{Isom}_x$ for every $x\in
\Lambda$ where $\mathrm{Isom}_x$ denotes an isometry. Similarly,
$s$-conformal hyperbolic sets are defined and we say that
$\Lambda$ is \emph{conformal} if it is both $s$-conformal and
$u$-conformal. For instance, on surfaces any hyperbolic set is
conformal. According to~\cite[Theorem 22.1]{Pesin98}
and~\cite[Theorem 6.2.8]{Barreira2011}, if $\Lambda$ is a
$u$-conformal (resp.~$s$-conformal) basic set
\footnote{{Observe that due to the spectral decomposition theorem, it is enough to assume that the basic set is only transitive and not necessarily mixing, see~\cite[p. 228]{Pesin98} and  \cite[pg.86 and 123]{Barreira2011}.
}}
$$
 \mathrm{d}^u(\Lambda) \eqdef \dim_{\mathrm H} W^u_{loc}(x)\cap \Lambda
 \qquad \text{(resp.~$\mathrm{d}^s(\Lambda)\eqdef\dim_{\mathrm H} W^s_{loc}(x)\cap
 \Lambda$)}
$$
does not depend on $x\in\Lambda$.
Moreover, if $\Lambda$ is conformal then it holds that
$$
 \dim_{\mathrm H}
\Lambda = \mathrm{d}^u(\Lambda)+\mathrm{d}^s(\Lambda).
$$
This result was obtained first, independently, by McCluskey and Manning in~\cite{MM83} and Palis and Viana in~\cite{PV88} for surface diffeomorphisms.
Barreira and Schmeling in~\cite[Theorem 4.2]{BS00} use this result to prove that if $\Lambda$ is a conformal elementary horseshoe of a $C^{1+\alpha}$ diffeomorphism $f$ then,
$$
 \dim_{\mathrm H} I(f) \geq \dim_{\mathrm H} I(f)\cap \Lambda=\dim_{\mathrm H} \Lambda.
$$
Following essentially similar arguments as in~\cite{BS00} to get
the above inequality, we can get the following improvement. First,
recall that by {an \emph{elementary (hyperbolic) set} of $f$ is
understood as a topologically mixing locally maximal hyperbolic
set of some iterate $f^k$ of $f$}.

\begin{prop} \label{Dim}
Let $f$ be a $C^{1+\alpha}$-diffeomorphism of a compact manifold.
Assume that there exists a $u$-conformal elementary hyperbolic set
$\Lambda$ of $s$-index $\mathrm{d}_s(\Lambda)$
(i.e.~$\mathrm{d}_s(\Lambda)=\dim E^s_\Lambda$).
 Then
$$
\dim_{\mathrm H}  I(f)  \geq  \mathrm{d}_s(\Lambda) + \mathrm{d}^u(\Lambda).
$$
\end{prop}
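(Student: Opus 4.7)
My plan is to proceed in three steps. First, since $\Lambda$ is elementary, it is topologically mixing and locally maximal for some iterate $f^k$; as $I(f^k)\subset I(f)$ (see the proof of Proposition~\ref{prop:1}) and the Hausdorff dimension is unchanged by passing to an iterate of $f$, I may assume throughout that $k=1$.

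Second, I realize the full unstable dimension on a single unstable slice. Fix $x_0\in\Lambda$ and set $\Lambda_0 = W^u_{loc}(x_0)\cap\Lambda$, so $\dim_{\mathrm H}\Lambda_0 = \mathrm{d}^u(\Lambda)$ by hypothesis. A Markov partition of $\Lambda$ projects onto $\Lambda_0$ as a conformal Moran-like construction whose dimension theory is governed by the Bowen--Ruelle pressure equation in the unstable direction, a setup which requires only the $u$-conformality of $\Lambda$ and topological mixing (cf.~\cite{Pesin98,Barreira2011}). Following the strategy of Barreira--Schmeling~\cite[Thm.~4.2]{BS00} --- whose argument on the unstable fibre does not invoke $s$-conformality --- I build
$$
E\subset I(f)\cap \Lambda_0\quad\text{with}\quad \dim_{\mathrm H} E = \mathrm{d}^u(\Lambda),
$$
by engineering the symbolic itineraries of the points of $E$ so that Birkhoff averages of a suitable Hölder observable oscillate, while retaining a Moran cover compatible with the equilibrium state that realizes the dimension.

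Third, I thicken $E$ by local stable leaves of $f$ taken in the ambient manifold $M$. For any $x\in I(f)$, $y\in W^s_{loc}(x)$ and $\phi\in C^0(M)$, uniform continuity of $\phi$ together with $d(f^n x,f^n y)\to 0$ gives $|\phi(f^n x)-\phi(f^n y)|\to 0$, so the Cesàro limsup and liminf of $\phi$ along $x$ and $y$ coincide and $y\in I(f)$. Hence $S:=\bigcup_{x\in E} W^s_{loc}(x)\subset I(f)$. In a smooth chart centred at $x_0$ in which $W^u_{loc}(x_0)$ lies in a horizontal plane, we obtain a continuous injection
$$
\Phi:E\times D^s\hookrightarrow M,\qquad \Phi(x,0)=x,
$$
whose image is $S$ and for which each $\Phi(x,\cdot)$ is a $C^{1+\alpha}$ embedding of the stable disk $D^s\subset\mathbb R^{\mathrm{d}_s(\Lambda)}$ onto $W^s_{loc}(x)$. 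A standard product-dimension argument for this lamination (e.g.~via a Frostman measure on $E$ disintegrated along stable leaves with Lebesgue measure) then yields $\dim_{\mathrm H} S \geq \dim_{\mathrm H} E + \mathrm{d}_s(\Lambda) = \mathrm{d}^u(\Lambda)+\mathrm{d}_s(\Lambda)$.

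The chief technical obstacle is this last product-dimension estimate: the stable lamination of a $C^{1+\alpha}$ hyperbolic set is typically only Hölder continuous, and a generic Hölder-parameterized family of smooth disks can have Hausdorff dimension strictly below the product value. The unlocking observation is that $\Phi$ restricts to the identity on $E\times\{0\}\subset E\subset W^u_{loc}(x_0)$, so no Hölder holonomy is applied to $E$ itself; only the fibre direction carries the $x$-dependence, and since each fibre is a smooth $\mathrm{d}_s(\Lambda)$-dimensional disk, the transverse dimensions are picked up cleanly and the expected product bound is achieved.
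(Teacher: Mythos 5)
Your overall strategy mirrors the paper's: realize the full unstable dimension inside $I(f)$ on a single unstable slice $\Lambda_0 = W^u_{loc}(x_0)\cap\Lambda$ (via Barreira--Schmeling on the $u$-conformal one-sided dynamics), observe that $W^s_{loc}(x)\subset I(f)$ whenever $x\in I(f)$, and then thicken by stable disks to pick up the stable index. Steps one and two are sound and match the paper's reduction, its use of \cite[Thm.~7.5]{BS00} through the identity $\dim_{\mathrm H} W^u(p)\cap I(f\vert_\Lambda)=\dim_{\mathrm H} W^u(p)\cap\Lambda$, and its stable-saturation set $F$.

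The gap is in the final step. Your ``unlocking observation'' --- that $\Phi$ is the identity on $E\times\{0\}$ so ``no H\"older holonomy is applied to $E$ itself'' --- does not actually remove the H\"older loss. The loss comes not from distorting $E$ at height zero but from how the smooth stable disks through nearby points of $E$ splay apart or compress at positive distance from $W^u_{loc}(x_0)$. To run the Frostman argument you propose, one takes a measure $\mu$ on $E$ with $\mu(B(x,r))\lesssim r^s$ and pushes $\mu\times\mathrm{Leb}_{D^s}$ forward by $\Phi$; to bound the mass of a ball $B(w,r)$ with $w$ away from $E$, you must bound $\mu\bigl(\{x\in E : W^s_{loc}(x)\cap B(w,r)\neq\emptyset\}\bigr)$. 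That set is precisely the image of a ball of radius $r$ under the stable holonomy $h^s$ back to $W^u_{loc}(x_0)$, and since $h^s$ is only $\alpha$-H\"older you get diameter $\lesssim r^\alpha$ and hence mass $\lesssim r^{\alpha s}$, not $r^s$. The conclusion is $\dim_{\mathrm H} S \geq \mathrm{d}_s(\Lambda)+\alpha\,\dim_{\mathrm H} E$, with the factor $\alpha$ intact. This is exactly the estimate in the paper's proof, which then closes the gap by a further nontrivial ingredient you have discarded: because $\Lambda$ is $u$-conformal, the H\"older exponent of the stable holonomy can be taken arbitrarily close to $1$ (cf.\ \cite{PV88}, \cite{SS92}, and the proof of \cite[Prop.~4.6]{BCF18}), and letting $\alpha\to 1$ gives the claimed bound. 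Without that input, your argument only proves $\dim_{\mathrm H} I(f)\geq \mathrm{d}_s(\Lambda)+\alpha\,\mathrm{d}^u(\Lambda)$ for whatever $\alpha$ the holonomy happens to have, which is strictly weaker.
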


\begin{proof}
{
Let $k$ be the period of the elementary set
$\Lambda$ (i.e, the smallest positive integer such that
$f^k(\Lambda)=\Lambda$).
Since $I(f^k) \subset I(f)$, we may assume for
simplicity
that $k=1$. Let $p$ be a fixed point in $\Lambda$.
 We will need the following fact, which we explain below: 
\begin{equation} \label{BSestimate}
\dim_{\mathrm H} W^u(p) \cap I(f|_\Lambda) = \dim_{\mathrm H} W^u(p) \cap \Lambda .
\end{equation}
Indeed, the assumption of $u$-conformality implies that the dynamics of $f$ restricted to $W^u(p)\cap \Lambda$ can be seen as a conformal expanding map.}
{
Applying~\cite[Thm.~7.5]{BS00},
(to $X=W^u(p)\cap \Lambda$, $m=1$, $g=1_X$  indicator of $X$ and $\phi_1$ to be any  H\"older continuous function on $W^u(p)\cap \Lambda$ which is not cohomologous to a constant function), we get
\[
 \dim_{\mathrm H} W^u(p)\cap I(f|_\Lambda)=\dim_{\mathrm H} I(f\vert_{W^u(p)\cap \Lambda})
 \geq  \dim I(\phi_1,f\vert _{W^u(p)\cap \Lambda})=\dim_{\mathrm H} W^u(p) \cap \Lambda
\]
where $I(\phi_1 , f\vert _{W^u(p)\cap \Lambda})$ is $\phi_1$-irregular set (see definition in Remark~\ref{Dim_cont}).
The converse inequality is obviously true and therefore we can conclude~\eqref{BSestimate}.}

{
Now there exists a point $x\in W^u(p)$ such that
\begin{equation} \label{xpeq}
\dim_{\mathrm H} W^u_{loc}(x)\cap I(f|_\Lambda)=\dim_{\mathrm H} W^u(p) \cap I(f|_\Lambda).
\end{equation}
 Fix such $x$ and let $V$ be a  small neighborhood of $x$.
Let us observe
that if $y\in I(f)$, then $W^s(y)\subset I(f)$. Define the set $F$ given by the set $I(f|_\Lambda)\cap V$
saturated by the stable leaves
$$
F=\{z \in V: z\in W^s(y) \  \ \text{and} \  \  y \in I(f)\cap
\Lambda =I(f|_\Lambda) \}.
$$
Then $F\subset I(f)$ and as a consequence $\dim_{\mathrm H} I(f) \geq \dim_{\mathrm H} F$.
Hence, it suffices to calculate the Hausdorff dimension of $F$.}

{Consider the $s$-holonomy map $h^s:V\to W^u_{loc}(x)$  defined using the stable foliation and the transversal section $W^u_{loc}(x)$ (here ``loc'' means the local intersection with $V$ ). Let $\phi(z)=(\pi(z),h^s(z))$ for $z\in V$, where
$\pi:V\to W^s_{loc}(x)$ is the orthogonal projection on $W^s_{loc}(x)$.  Observe that $\phi(F)= W^s_{loc}(x) \times (W^u_{loc}(x) \cap I(f|_\Lambda))$,
and so $F$ can be identified with a product given
by $W^s_{loc}(x)\times (W^u_{loc}(x)\cap I(f|_\Lambda))$.
In general the $s$-holonomy is only H\"older continuous with exponent $0<\alpha\leq 1$. Then, a priori $\phi$ is just an $\alpha$-H\"older continuous map and thus $$\dim_H \phi(F) \leq \frac{1}{\alpha} \cdot \dim_H F.$$ However, since $\Lambda$ is $u$-conformal then the H\"older exponent $\alpha$ can be taken
arbitrarily close to 1 (cf.~\cite{PV88},~\cite{SS92}
and~\cite[proof of Proposition 4.6]{BCF18}). This implies that $\dim_H \phi(F) \leq   \dim_H F$.
Thus, one gets that
$$\dim_{\mathrm H} I(f) \geq \dim_{\mathrm H} F \geq \dim_{\mathrm
H} W^s_{loc}(x) + \dim_{\mathrm H}  W^u_{loc}(x)\cap
I(f|_\Lambda) .$$
Using~\eqref{BSestimate} and~\eqref{xpeq},
$$\dim_{\mathrm H} I(f) \geq \dim_{\mathrm
H} W^s_{loc}(x)+\dim_{\mathrm H} W^u(p) \cap \Lambda.$$
On the other hand,
$\dim_{\mathrm H} W^u(p) \cap \Lambda =\dim_{\mathrm H} W^u_{loc}(x)\cap \Lambda=\mathrm{d}^u(\Lambda)$, see~\cite[Thm.22.1]{Pesin98}.
Thus, taking into account that
$\dim_{\mathrm H} W^s_{loc}(x)= \mathrm{d}_s(\Lambda)$,  we obtain
that $\dim_{\mathrm H} I(f)\geq \mathrm{d}_s(\Lambda)
+\mathrm{d}^u(\Lambda)$ concluding the proof of the proposition.
}
\end{proof}

\begin{rem}\label{rem:0410}
Let $\Lambda$ be a $u$-conformal basic set of $f$ {a
$C^{1+\alpha}$-diffeomorphism}.
 It follows from the Spectral Decomposition Theorem~\cite{KH95}
 that $\Lambda$ is a finite union of elementary sets  $\Lambda _1, \ldots , \Lambda _{k}$
 which are cyclically permuted by $f$. Thus, $\mathrm{d}_s(\Lambda)=\mathrm{d}_s(\Lambda_i)$
 and $\mathrm{d}^u(\Lambda)=\mathrm{d}^u(\Lambda_i)$ for all $i=1,\dots,  k$.
Hence, applying Proposition~\ref{Dim}, we get that
\[
\mathrm{dim} _{\mathrm{H}} I(f) \geq \max _{1\leq i\leq k} \left(
\mathrm{d}_s(\Lambda _i) + \mathrm{d}^u(\Lambda _i)\right) \geq
\mathrm{d}_s(\Lambda)+\mathrm{d}^u(\Lambda).
\]
\end{rem}
{Proposition~\ref{Dim} with Remark~\ref{rem:0410} can be applied
to diffeomorphisms having a $u$-conformal uniformly hyperbolic
attractor $\Lambda$, which is a basic set. Since for every $y\in
\Lambda$  the unstable manifold $W^u(y)$ is contained in
$\Lambda$, then $\mathrm{d}^u(\Lambda )$ is the $u$-index $\mathrm{d}_u(\Lambda)$ of $\Lambda$ (i.e., $\dim E^u_\Lambda$). Thus, we
obtain the following corollary.
\begin{cor}\label{cor:0219b}
Let $f$ be a $C^{1+\alpha}$-diffeomorphism of a manifold $M$, having a non-trivial $u$-conformal uniformly hyperbolic attractor $\Lambda$.
Then,
$\dim_{\mathrm H} I(f)=\dim M $ (full Hausdorff dimension of irregular set).
\end{cor}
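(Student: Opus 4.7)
The strategy is to deduce the corollary directly from Proposition~\ref{Dim} combined with Remark~\ref{rem:0410}, once the ``horseshoe-like'' unstable dimension $\mathrm{d}^u(\Lambda)$ is identified with the topological $u$-index $\mathrm{d}_u(\Lambda) = \dim E^u_\Lambda$.

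First I would check that $\Lambda$ satisfies the hypotheses of Remark~\ref{rem:0410}: a non-trivial uniformly hyperbolic attractor is compact, $f$-invariant, and locally maximal by construction, and by the Spectral Decomposition Theorem it decomposes into finitely many cyclically permuted transitive pieces on which one can work. Hence $\Lambda$ may be treated as a $u$-conformal basic set, and Remark~\ref{rem:0410} yields
$$\dim_{\mathrm H} I(f) \;\geq\; \mathrm{d}_s(\Lambda) + \mathrm{d}^u(\Lambda).$$

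Next I would use the defining property of a hyperbolic attractor: for every $x\in\Lambda$ the whole unstable manifold $W^u(x)$ is contained in $\Lambda$, and in particular $W^u_{loc}(x)\cap \Lambda = W^u_{loc}(x)$. Consequently,
$$\mathrm{d}^u(\Lambda) \;=\; \dim_{\mathrm H} W^u_{loc}(x) \;=\; \dim E^u_\Lambda \;=\; \mathrm{d}_u(\Lambda).$$
Since the hyperbolic splitting gives $TM|_\Lambda = E^s\oplus E^u$, we have $\mathrm{d}_s(\Lambda)+\mathrm{d}_u(\Lambda)=\dim M$. Plugging in, the displayed inequality becomes $\dim_{\mathrm H} I(f)\geq \dim M$, while the opposite bound $\dim_{\mathrm H} I(f)\leq \dim M$ is automatic since $I(f)\subset M$.

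The heavy machinery has already been absorbed in the proof of Proposition~\ref{Dim}, where the saturation of $W^u_{loc}(x)\cap I(f|_\Lambda)$ by stable leaves and the transfer of Hausdorff dimension across the stable holonomy require $u$-conformality to bring the H\"older exponent of that holonomy arbitrarily close to $1$. Given this tool, the only potential obstacle in the present argument is the mild structural check that a non-trivial hyperbolic attractor is a basic set and that its unstable manifolds are contained in it; both are classical consequences of the local product structure together with the attractor condition $f(\overline{U})\subset U$.
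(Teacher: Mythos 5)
Your argument is correct and follows essentially the same route as the paper: apply Proposition~\ref{Dim} (via Remark~\ref{rem:0410}) to the attractor, and use the fact that unstable manifolds of points in a hyperbolic attractor lie entirely in the attractor, so $W^u_{loc}(x)\cap\Lambda=W^u_{loc}(x)$ and hence $\mathrm{d}^u(\Lambda)=\dim E^u_\Lambda$, giving $\mathrm{d}_s(\Lambda)+\mathrm{d}^u(\Lambda)=\dim M$. The paper states this in one short paragraph before the corollary; your write-up simply makes the same steps explicit.
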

}
Hence, one approach to obtain full Hausdorff dimension of the irregular set is to find  a sequence of $u$-conformal horseshoes $(\Lambda_n)_{n\geq 1}$ of $f$ such that the $s$-index of $\Lambda _n$ together with the unstable Hausdorff dimension of $\Lambda _n$ approach the dimension of the whole space.

\begin{rem} \label{Dim_cont}
Given a continuous map $\phi:X \to \mathbb{R}$, we denote by
$I(\phi,f)$ the subset of  irregular points of $f$ for which  the
time average with respect to the potential $\phi$ does not
converge. This set is usually called the \emph{$\phi$-irregular set}.
Feng, Lau and Wu~\cite{FLW02} showed that if $f$ is of class
$C^{1+\alpha }$ and $  \Lambda$ is a  topologically mixing
repeller of $f$,  then for each continuous function $\phi$ such
that $I(\phi , f\vert _\Lambda ) \neq \emptyset$,
\[
\dim_{\mathrm H} I (\phi , f\vert _\Lambda ) = \dim_{\mathrm H}   \Lambda 
\]
see \cite[Theorem~1.2]{FLW02}. That is, the
{$\phi$-irregular set} has full Hausdorff dimension. The
statement also holds if $f$ is a topologically
mixing subshift of finite type~\cite[Sec.~4]{FFW01}. This can be
compared with \cite[Theorem~4.2 (1) and 7.1]{BS00} where it was
shown only for H\"older potentials $\phi$. Therefore, the proof of
Proposition \ref{Dim}
 works to prove that 
\begin{equation*}
I(\phi , f\vert _\Lambda ) \neq \emptyset \quad \Rightarrow \quad
\dim_{\mathrm H} I(\phi , f\vert _\Lambda ) \geq
\mathrm{d}_s(\Lambda) + \mathrm{d}^u(\Lambda)
\end{equation*}
for each  $C^{1+\alpha}$-diffeomorphism $f$ of a compact manifold
with a $u$-conformal elementary hyperbolic set $\Lambda$ of
$s$-index $\mathrm{d}_s(\Lambda)$. Consequently, as in Corollary~\ref{cor:0219b}, we get full Hausdorff dimension of any non-empty $\phi$-irregular sets of a non-trivial $u$-conformal uniformly hyperbolic attractor of a $C^{1+\alpha}$-diffeomorphism. \vspace{-0.20cm}
\end{rem}

\subsection{One dimensional examples}
For one-dimensional uniform expanding dynamics and, in general, for conformal repellers in higher dimension,
it  follows from~\cite{BS00,FFW01,FLW02} that the irregular set  has full Hausdorff dimension (on the repeller).
This result was extended in~\cite{Chung2010} for topologically exact $C^2$-interval maps admitting an absolute continuous invariant probability measure (see Proposition 8  and the remark before section 4 of \cite{Chung2010}).
Another extension was made in \cite{MY17} for a class of  non-uniformly expanding one-dimensional dynamics which includes  Manneville-Pomeau maps, that is, the map $T : [0,1] \to [0,1]$ defined by $T(x) = x + x^{1+\alpha} \mod 1$, where $0 < \alpha < 1$. \vspace{-0.20cm}

\subsection{Surface diffeomorphisms}
{For $C^{1+\alpha}$ surface diffeomorphisms any hyperbolic set is
$u$-conformal and in particular Proposition~\ref{Dim} and
Corollary~\ref{cor:0219b} can be applied.}
Homoclinic tangencies are the obstruction to hyperbolicity for surface diffeomorphisms.
In \cite[Theorem 1.6]{DN05}, Downarowicz and Newhouse  constructed a residual subset $\mathcal R$ of $\mathrm{Diff}^r(M)$ ($r\geq 2$) for a compact surface $M$,  such that if $f\in \mathcal R$ and $f$ has a homoclinic tangency, then $f$ has a sequence of elementary horseshoes $(\Lambda_n)_{n\geq 1}$ with
$\dim_{\mathrm H} \Lambda_n \to 2$ as $n\to \infty$.
Therefore, combining this with Corollary~\ref{thm-entropy-two-dimensional} and Proposition \ref{Dim}, we obtain full entropy and Hausdorff dimension of irregular set in this setting.
\begin{cor}\label{cor:0219a}
There exists a residual subset $\mathcal R$ of
$\mathrm{Diff}^r(M)$  for a compact surface $M$  with $r\geq 2$
such that if $f\in \mathcal R$ and $f$ has a homoclinic tangency,
then $\dim_{\mathrm H} I(f) =2$. In particular, the irregular set
of $f$ has full topological entropy and full Hausdorff dimension.
\end{cor}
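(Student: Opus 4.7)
The plan is to glue together Downarowicz--Newhouse's construction of horseshoes of dimension approaching $2$ with the dimension lower bound from Proposition~\ref{Dim}, and then invoke Corollary~\ref{thm-entropy-two-dimensional} for the entropy statement. Concretely, I would take $\mathcal R$ to be exactly the residual subset of $\mathrm{Diff}^r(M)$ provided by \cite[Theorem~1.6]{DN05} (intersected, if needed, with the $C^{1+\alpha}$-generic set implicit in Corollary~\ref{thm-entropy-two-dimensional}, which is automatic because $r\geq 2$ gives $C^{1+\alpha}$-regularity for any $0<\alpha\leq 1$). Given $f\in\mathcal R$ with a homoclinic tangency, Downarowicz--Newhouse produces a sequence $(\Lambda_n)_{n\geq 1}$ of elementary horseshoes of $f$ with $\dim_{\mathrm H}\Lambda_n\to 2$.

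The key observation is that on a surface each $\Lambda_n$ is automatically conformal: the stable and unstable bundles of a surface horseshoe are one-dimensional, so the restriction $Df|_{E^u}$ (resp.~$Df|_{E^s}$) is a scalar times an isometry at every point, which is precisely the definition of $u$-conformality (resp.~$s$-conformality). Because $\Lambda_n$ is conformal, the formula recalled just before Proposition~\ref{Dim} yields
\[
\mathrm{d}_s(\Lambda_n)+\mathrm{d}^u(\Lambda_n)=\mathrm{d}^s(\Lambda_n)+\mathrm{d}^u(\Lambda_n)=\dim_{\mathrm H}\Lambda_n,
\]
where I have also used that on a surface the $s$-index $\mathrm{d}_s(\Lambda_n)=\dim E^s_{\Lambda_n}=1=\mathrm{d}^s(\Lambda_n)$.

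Now I would apply Proposition~\ref{Dim} to each $\Lambda_n$ (noting that $u$-conformality of an elementary surface horseshoe is guaranteed by the previous paragraph) to obtain
\[
\dim_{\mathrm H} I(f)\;\geq\;\mathrm{d}_s(\Lambda_n)+\mathrm{d}^u(\Lambda_n)\;=\;\dim_{\mathrm H}\Lambda_n.
\]
Letting $n\to\infty$ gives $\dim_{\mathrm H} I(f)\geq 2$, and since $I(f)\subset M$ with $\dim M=2$ the reverse inequality is trivial, so $\dim_{\mathrm H} I(f)=2$. The topological entropy assertion is immediate from Corollary~\ref{thm-entropy-two-dimensional} applied to $f$, which is $C^{1+\alpha}$ because $r\geq 2$.

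I do not anticipate a serious obstacle: all the heavy lifting (producing the horseshoes, and converting conformal horseshoes into a dimension estimate on $I(f)$) is already packaged in \cite{DN05} and Proposition~\ref{Dim}, and the surface assumption trivializes the conformality hypothesis. The only mild point to verify carefully is that the residual set from Downarowicz--Newhouse, the $C^{1+\alpha}$ regularity needed for Proposition~\ref{Dim}, and the generic set (if any) used in Corollary~\ref{thm-entropy-two-dimensional} can be intersected without losing residuality; this is automatic because $r\geq 2$ gives $C^{1+\alpha}$ and Corollary~\ref{thm-entropy-two-dimensional} requires no genericity beyond $C^{1+\alpha}$-regularity.
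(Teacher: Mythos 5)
Your approach is exactly the paper's: take the Downarowicz--Newhouse residual set $\mathcal R$, use the fact that surface horseshoes are automatically conformal, and combine Proposition~\ref{Dim} for the dimension with Corollary~\ref{thm-entropy-two-dimensional} for the entropy. The only place where you slip is the displayed identity
\[
\mathrm{d}_s(\Lambda_n)+\mathrm{d}^u(\Lambda_n)=\mathrm{d}^s(\Lambda_n)+\mathrm{d}^u(\Lambda_n)=\dim_{\mathrm H}\Lambda_n,
\]
justified by the claim $\mathrm{d}^s(\Lambda_n)=1$. That claim is false: $\mathrm{d}^s(\Lambda_n)=\dim_{\mathrm H}\bigl(W^s_{\mathrm{loc}}(x)\cap\Lambda_n\bigr)$ is the dimension of a Cantor set in a one-dimensional stable leaf, so it is strictly less than $1$ for a genuine horseshoe. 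You have conflated the $s$-\emph{index} $\mathrm{d}_s(\Lambda_n)=\dim E^s_{\Lambda_n}=1$ with the \emph{stable Hausdorff dimension} $\mathrm{d}^s(\Lambda_n)<1$; these are two different quantities the paper carefully distinguishes (note the subscript versus superscript). Fortunately the mistake only overstates an equality where an inequality in the right direction suffices, and the correct chain is
\[
\dim_{\mathrm H}I(f)\;\geq\;\mathrm{d}_s(\Lambda_n)+\mathrm{d}^u(\Lambda_n)\;=\;1+\mathrm{d}^u(\Lambda_n)\;\geq\;\mathrm{d}^s(\Lambda_n)+\mathrm{d}^u(\Lambda_n)\;=\;\dim_{\mathrm H}\Lambda_n\;\longrightarrow\;2,
\]
the middle inequality using only $\mathrm{d}^s(\Lambda_n)\leq 1$ (obvious since $W^s_{\mathrm{loc}}$ is one-dimensional). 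With that correction the argument is complete and matches the paper's intent.
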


\subsection{Parametric families in low dimensions}\label{ss:0219a}
\subsubsection{Quadratic family}
As previously mentioned, it follows from \cite{Chung2010} that if $f$ is a topologically exact $C^2$-interval map admitting an absolute continuous invariant probability measure, then $\dim_{\mathrm H} I(f)=1$.
On the other hand, it is well known that these conditions are satisfied for the quadratic map $f_a(x)=x^2+a$ for parameters $a$ in a Lebesgue positive measure set $E$ close to $a=2$ (cf.~\cite{Jak81}).
Thus, the irregular set of the quadratic map $f_a$ has full Hausdorff dimension for all $a\in E$.

\subsubsection{The Standard map}
The standard family  is defined on the torus by
$$
f_k(x,y)=(-y+2x+k\sin(2\pi x), x) \quad \mod \mathbb Z^2.
$$
Gorodetski \cite{Gor12} proved that for each $k$   in  a residual
set of $[k_0, \infty )$ with a  large $k_0$, there exists a
sequence of elementary horseshoes $(\Lambda_n^k)_{n\geq 1}$ of
$f_k$ satisfying $\dim_{\mathrm H} \Lambda_n^k \rightarrow 2$ as
$n\to \infty$. Thus, again  by Proposition~\ref{Dim} we conclude
that  $\dim_{\mathrm H} I(f_k)=2$ for all $k$   in  the residual
set (full Hausdorff dimension). We note that the Gorodestki's
approximation theorem by horseshoes holds for every
area-preserving one-parameter  family $(g_t) $ generically
unfolding a quadratic homoclinic tangency at $g_0$.

%

\subsection{Examples in skew-products over a symbolic shift}\label{ss:0219b}
\subsubsection{Hyperbolic graphs}
Consider a skew product $F$ of the form $F(x, y)=(f(x), g_x(y))$ defined in $\Lambda\times
\mathbb{R}^n$, where $\Lambda$ is an elementary  horseshoe of a two-dimensional $C^2$-diffeomorphism $f$.
As mentioned before, from \cite{BS00} we have that~$\dim_{\mathrm H} (I(f)\cap \Lambda )=\dim_{\mathrm H} \Lambda $.
Let us further observe the following.
Given $x_0\in I(f)$,  there exists a continuous function $\phi_0: \Lambda\rightarrow \mathbb{R}$ for
which the Birkhoff time average
$$\lim _{n\to\infty} \frac{1}{n}\sum_{j=0}^{n-1} \phi_0(f^j(x_0))$$
does not exist.
Considering $\tilde{\phi}_0(x,y)=\phi_0(x)$ and because of the skew-product structure of $F$, the above implies that for every $y\in \mathbb{R}^n$, the Birkhoff time average with respect to the point $(x_0, y)$ also does not converge.
Therefore, we obtain in this case full Hausdorff dimension of the irregular set of $F$:
\begin{lem}\label{lem:0219d}
$I(f)\times \mathbb{R}^n\subset I(F)$ and $\dim_{\mathrm H} I(F) =\dim_{\mathrm H} \Lambda+ n$ (full Hausdorff dimension).
\end{lem}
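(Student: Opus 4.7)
The plan is to prove the two assertions of the lemma separately, both by short arguments.

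The inclusion $I(f)\times \mathbb{R}^n \subset I(F)$ is a direct formalisation of the observation stated just before the lemma. Given $x_0 \in I(f)$, pick (by Tietze extension, if necessary) a continuous function $\phi_0:\Lambda\to\mathbb{R}$ whose Birkhoff averages along the orbit of $x_0$ under $f$ fail to converge, and set $\tilde\phi_0(x,y):=\phi_0(x)$. Then $\tilde\phi_0$ is continuous on $\Lambda\times\mathbb{R}^n$; moreover, the skew-product structure $F(x,y)=(f(x),g_x(y))$ yields $F^j(x_0,y)=(f^j(x_0),\,\ast)$, so that
\[
\frac{1}{N}\sum_{j=0}^{N-1}\tilde\phi_0(F^j(x_0,y)) = \frac{1}{N}\sum_{j=0}^{N-1}\phi_0(f^j(x_0))
\]
for every $N\ge 1$ and every $y\in\mathbb{R}^n$. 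The right-hand side diverges by the choice of $\phi_0$, hence $(x_0,y)\in I(F)$ for all $y$, proving the inclusion.

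For the Hausdorff dimension equality I would combine three ingredients. First, the result of Barreira--Schmeling recalled just above the lemma, namely $\dim_{\mathrm H}(I(f)\cap \Lambda)=\dim_{\mathrm H}\Lambda$. Second, the classical product lower bound $\dim_{\mathrm H}(A\times B)\ge \dim_{\mathrm H} A + \dim_{\mathrm H} B$ for Borel sets in Euclidean spaces, which is a standard consequence of Frostman's lemma (Marstrand--Howroyd). Applying this with $A=I(f)\cap\Lambda$ and $B=[-R,R]^n$ for arbitrarily large $R>0$, and using the inclusion from the first part, one gets
\[
\dim_{\mathrm H} I(F) \ge \dim_{\mathrm H}\bigl((I(f)\cap\Lambda)\times[-R,R]^n\bigr) \ge \dim_{\mathrm H}\Lambda + n.
\]
Third, the matching upper bound follows from the trivial inclusion $I(F)\subset \Lambda\times\mathbb{R}^n$ together with the fact that the product with a smooth manifold preserves additivity of Hausdorff dimension, i.e.\ $\dim_{\mathrm H}(\Lambda\times\mathbb{R}^n)=\dim_{\mathrm H}\Lambda + n$. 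Combining these three inequalities gives the claimed equality.

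No step here is a genuine obstacle; the argument is essentially routine. The only point deserving some care is the product dimension formula: the lower bound requires the Marstrand/Howroyd-type inequality above rather than the naive product estimate (which would involve upper box dimension on one factor), while the matching upper bound is harmless precisely because the fibre $\mathbb{R}^n$ is smooth, so its Hausdorff, packing and upper box dimensions all coincide with $n$.
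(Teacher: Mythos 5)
Your proof is correct and follows essentially the same approach the paper takes: the inclusion $I(f)\times\mathbb{R}^n\subset I(F)$ is exactly the observation made in the text immediately preceding the lemma (push forward an observable $\phi_0$ depending only on the base coordinate and use the skew-product structure), and the dimension equality then comes from $\dim_{\mathrm H}(I(f)\cap\Lambda)=\dim_{\mathrm H}\Lambda$ (the cited Barreira--Schmeling result) combined with the product dimension estimates. The paper leaves the product-dimension step implicit, whereas you correctly spell out that one needs the Marstrand--Howroyd lower bound on one side and the fact that $\mathbb{R}^n$ has coinciding Hausdorff and upper box dimension (via exhaustion by compact cubes) on the other; this is exactly the intended argument.
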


Thus, in this setting, it makes more sense to ask the following.
Is it true that if $I(\phi, F)$ is non-empty, then it has full Hausdorff dimension?
Recall that given a continuous function $\phi: \Lambda\times
\mathbb{R}^n \to \mathbb{R}$, the $\phi$-irregular set $I(\phi, F)$ is defined as
the set of points $(x,y)\in \Lambda\times \mathbb{R}^n$ for which   $\lim _{n\to\infty} n^{-1}\sum_{j=0}^{n-1} \phi (F^j(x,y))$ does not exist.

Assume from now that the fiber map $g_x$ is a contraction for every $x\in \Lambda$.
Then, the maximal invariant set of $F$ is unique and is the graph $\{ (x, \Phi (x)) \mid x\in \Lambda\}$ of an invariant map $\Phi:\Lambda\rightarrow \mathbb{R}^n$, that is $g_x(\Phi(x))=\Phi(f(x))$ for every $x\in \Lambda$.
This class includes an important example of horseshoes called \emph{blenders} (cf.~\cite{bonatti2006dynamics}).
We emphasize that compared with the one-dimensional or two-dimensional cases, in higher dimensions, there is no explicit way to calculate the Hausdorff dimension of the maximal invariant set.
Let us mention the work \cite{DGGJ19}, where D\'iaz et al. calculated  the box dimension of the maximal invariant set in dimension $3$ of a variety of examples of skew-products with contracting/expanding fibers, including blenders.
Nevertheless,
in the special case when the fiber maps are contractions, we can give a complete answer to the above question about
the Hausdorff dimension of
 $\phi$-irregular sets:

\begin{prop}
Assume that $g_x$ is a contraction for all $x\in \Lambda$.
Then, for any continuous function $\phi :\Lambda \times \mathbb R^n \to \mathbb R$, either $I(\phi, F)=\emptyset$ or $\dim_{\mathrm H} I(\phi ,F)=\dim_{\mathrm H} \Lambda + n$.
\end{prop}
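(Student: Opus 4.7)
The strategy is to reduce the $\phi$-irregular set of $F$ on $\Lambda\times\mathbb{R}^n$ to a $\tilde\phi$-irregular set of $f\vert_\Lambda$ on the base $\Lambda$, and then combine Remark~\ref{Dim_cont} with the Hausdorff product formula. Because each $g_x$ is a contraction and $\Lambda$ is compact, the attracting invariant graph yields a continuous section $\Phi:\Lambda\to\mathbb{R}^n$ such that, for every $(x,y)\in\Lambda\times\mathbb{R}^n$,
\[
\bigl\| F^j(x,y) - \bigl(f^j(x),\Phi(f^j(x))\bigr) \bigr\| \to 0 \quad \text{as } j\to\infty,
\]
uniformly on bounded subsets. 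Define $\tilde\phi(x) := \phi(x,\Phi(x))$, which is continuous on $\Lambda$. Using uniform continuity of $\phi$ on a compact neighborhood of the graph absorbing the orbit of $(x,y)$, together with the above synchronization, I would deduce that $|\phi(F^j(x,y)) - \tilde\phi(f^j(x))| \to 0$ as $j\to\infty$. A Ces\`aro argument then gives
\[
\frac{1}{n}\sum_{j=0}^{n-1} \phi(F^j(x,y)) - \frac{1}{n}\sum_{j=0}^{n-1} \tilde\phi(f^j(x)) \to 0 \quad \text{as } n\to\infty,
\]
so the two Birkhoff averages converge or diverge simultaneously, with the same limit when they converge. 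This yields
\[
I(\phi,F) \;=\; I(\tilde\phi, f\vert_{\Lambda}) \times \mathbb{R}^n.
\]

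Assume now $I(\phi,F)\neq\emptyset$, which forces $I(\tilde\phi, f\vert_{\Lambda})\neq\emptyset$. Since $\Lambda$ is a hyperbolic basic set of a surface $C^2$-diffeomorphism it is automatically $u$-conformal (and $s$-conformal), and Remark~\ref{Dim_cont} applied to the continuous potential $\tilde\phi$ yields
\[
\dim_{\mathrm H} I(\tilde\phi, f\vert_{\Lambda}) \;\geq\; \mathrm{d}_s(\Lambda) + \mathrm{d}^u(\Lambda) \;=\; \dim_{\mathrm H} \Lambda,
\]
the last equality being the McCluskey--Manning/Palis--Viana formula for conformal basic sets. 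The reverse inequality is trivial, so $\dim_{\mathrm H} I(\tilde\phi, f\vert_{\Lambda}) = \dim_{\mathrm H} \Lambda$.

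Finally, the Hausdorff product formula $\dim_{\mathrm H}(A\times\mathbb{R}^n) = \dim_{\mathrm H} A + n$ (which holds because the Hausdorff and upper box dimensions of $\mathbb{R}^n$ coincide on each bounded slice, so Marstrand's product inequalities are sharp) gives
\[
\dim_{\mathrm H} I(\phi, F) \;=\; \dim_{\mathrm H} I(\tilde\phi, f\vert_{\Lambda}) + n \;=\; \dim_{\mathrm H} \Lambda + n,
\]
which is the full Hausdorff dimension of $\Lambda\times\mathbb{R}^n$. The main technical point is the reduction in the first paragraph, where the contraction hypothesis is crucially used to match the $\phi$-Birkhoff behavior on $F$-orbits with the $\tilde\phi$-Birkhoff behavior on the base; after that, the dimension calculation is a direct combination of the Feng--Lau--Wu-type estimate in Remark~\ref{Dim_cont} with the product formula.
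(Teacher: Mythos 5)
Your proposal is correct and follows essentially the same route as the paper: reduce to the induced potential $\tilde\phi(x)=\phi(x,\Phi(x))$ on the base via the attracting invariant graph, observe that full fibers $\{x\}\times\mathbb{R}^n$ lie in the stable set of graph points so that the $\phi$-irregular set is exactly $I(\tilde\phi,f\vert_\Lambda)\times\mathbb{R}^n$, then invoke Remark~\ref{Dim_cont} for the conformal surface horseshoe and the Hausdorff dimension product formula. The only cosmetic difference is that you spell out the Ces\`aro/uniform-continuity mechanism behind the inclusion, whereas the paper phrases it as $\{x\}\times\mathbb{R}^n=W^s_{\mathrm{loc}}(x,\Phi(x))\subset I(\phi,F)$; the underlying argument is the same.
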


\begin{proof}
Denote by $\Phi : \Lambda \to \mathbb R^n$
 the invariant map of $F$ and by $\Gamma$  the graph of $\Phi$.
Let $\phi $ be a continuous function on $\Lambda \times \mathbb R^n$.
Then it induces a continuous function $\tilde\phi$ in $\Lambda$, defined by $\tilde\phi(x)=\phi(x, \Phi(x))$.
We claim that
\begin{equation} \label{eq:pp}
I(\tilde{\phi},f|_{\Lambda}) \times
\mathbb{R}^n \subset I(\phi,F).
\end{equation}
Indeed, if $(x,y)\in I(\tilde{\phi},f|_{\Lambda}) \times \mathbb{R}^n$ then $(x,\Phi(x)) \in I(\phi, F)$.
Hence, since $\{x\}\times \mathbb{R}^n =W^s_{loc}(x,\Phi(x)) \subset I(\phi,F)$ we get that $(x,y)\in I(\phi,F)$.
That is, \eqref{eq:pp} holds.

Suppose that $I(\phi, F)$ is non-empty and take $(x, y)\in I(\phi,F)$.
 Observe that $(x,y)\in W^s_{loc}(x, \Phi(x))$.
 Then $(x,\Phi(x))\in I(\phi, F|_\Gamma)$ and so $I(\phi, F|_\Gamma)$ is non-empty.
 Thus $I(\tilde\phi, f|_{\Lambda})$ is also non-empty.
Since $\Lambda$ is a two-dimensional horseshoe, it follows (see Remark~\ref{Dim_cont}) that $I(\tilde\phi, f|_{\Lambda})$ has full Hausdorff dimension.
Combining this with \eqref{eq:pp} we obtain that
$$
\dim_{\mathrm H} I(\phi,F) = \dim_{\mathrm H} \Lambda +n
$$
completing the proof.
\end{proof}

\subsubsection{Porcupines and bony graphs}  Porcupines were defined in~\cite{GD12} and form an important class of partially hyperbolic sets, mixing contracting and expanding behaviour. Below we describe the setting borrowing the notation from \cite{DM}.
Let $\Sigma_2=\{0,1\}^\mathbb{Z}$ and consider a skew-product map $F$ of $\Sigma_2\times [0,1]$ defined by
$$
F(\xi, x)=(\sigma (\xi), f_{\xi_0}(x))
$$
where $\sigma:\Sigma_2 \to\Sigma_2$ denotes the shift map and $\xi_0$ is the zeroth coordinate of $\xi=(\xi_i)_{i\in\mathbb{Z}}$.
We actually study a one-parameter family $(F_t)$ given by $F_t(\xi, x)=(\sigma (\xi), f_{\xi_0, t}(x))$, where $f_{0,t} = f_0$ is an increasing concave $C^2$-map independent of $t$ with two fixed hyperbolic points $f_0(0) = 0$, $f_0 (1) = 1$ and $f_{1,t}$ is the explicit affine map $f_{1,t} (x) = t(1-x)$.

Denote by $\Lambda_t$ the maximal invariant set of $F_t$.
The set $\Lambda_t$ is semi-conjugate to the shift map in $\Sigma_2$.
Namely, if $\Pi$ denotes the projection of $\Sigma_2\times [0,1]$ onto the first coordinate, then $\Pi_t \circ F_t = \sigma\circ\Pi_t$.
For each $\xi\in \Sigma_2$ we consider the set
$$
\Pi_t^{-1}(\xi)\eqdef \Pi^{-1}(\xi)\cap \Lambda_t=\{\xi\}\times I_\xi
$$
 called a \emph{spine} of $\Lambda_t$.
 Here $I_\xi$ is an interval of $[0,1]$.
The spine $I_\xi$ is said to be \emph{non-trivial} if it is not a singleton and \emph{trivial} otherwise.
In this way, we split the set $\Sigma_2$  into two disjoint invariant sets $\Sigma^{\mathrm{non}}_{2,t}$ and $\Sigma_{2,t}^{\mathrm{trv}}$ consisting of sequences with non-trivial and trivial spines, respectively.
Moreover, $\Lambda_t$ is a \emph{porcupine-like horseshoe} (shortly, a \emph{porcupine}), that is, a topologically transitive set of $F_t$  such that  the sets $\Sigma_{2,t}^{\mathrm{non}}$ and $\Sigma_{2,t}^{\mathrm{trv}}$ are both dense and uncountable in $\Sigma_2$.
We observe that the topological entropy of $F_t$ is always $\log(2)$ because $F_t$ is semi-conjugate to the full shift $\Sigma_2$ and the fiber maps do not have critical points.


In the set $\Sigma_2$ consider the distance defined by
$$
d(\xi, \zeta) = 2^{1/2} 2^{-|n|} \quad \text{where $\vert n\vert$  is the smallest value
with $\xi_n\not = \zeta_n$.}
$$
With this distance the Hausdorff dimension of $\Sigma_2$  is $2$.
Denote by $\mathbf{b}_{1/2}$ the $(1/2, 1/2)$-Bernoulli measure on $\Sigma_2$, which in this case coincides with the $2$-dimensional Hausdorff outer measure $m_2$.
It is shown in \cite[Theorems 1 and 2]{DM} that, for every $t\in (0,1)$,
$$
\dim_{\mathrm H}(\Sigma^{\textrm{non}}_{2,t})<2 \qquad \text{and} \qquad \mathbf{b}_{1/2}(\Sigma^{\mathrm{trv}}_{2,t})=m_2(\Sigma^{\mathrm{trv}}_{2,t})=1.
$$

Consider the following set
$$
G_t=\{ (\xi, g(\xi)) \in \Lambda_t: \xi \in \Sigma^{\mathrm{trv}}_{2,t}\}
$$
of a $\mathbf{b}_{1/2}$-almost everywhere defined function $g$ from the base of the skew product to the fiber.
In particular, one gets that $\Lambda_t$ is a union of the almost-everywhere defined graph $G_t$ with some non-trivial spines (called bones),
and this is termed a \emph{bony graph}.

\begin{prop}
For every  continuous
$\phi$ on $\Sigma _2 \times [0,1]$ and  $t\in (0,1)$, either $I(\phi, F_t)\cap G_t=\emptyset$ or
$\dim_{\mathrm H}(I(\phi, F_t))\geq 2.$ In the latter case $I(\phi, F_t)$ also has full entropy.
\end{prop}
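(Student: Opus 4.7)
The plan is to use the invariant graph $G_t$ to project the problem onto the base, and then to apply a Barreira--Schmeling saturated-set construction on the full shift $(\Sigma_2,\sigma)$.

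First, I would check that the graph map $g$ is continuous on the $\sigma$-invariant subset $\Sigma^{\mathrm{trv}}_{2,t}$: if $\xi_n\to\xi$ in $\Sigma^{\mathrm{trv}}_{2,t}$ and $g(\xi_n)\to x$ in $[0,1]$, then $(\xi,x)\in\Lambda_t$ by closedness, and the triviality of the spine $I_\xi=\{g(\xi)\}$ forces $x=g(\xi)$. Therefore $\tilde\phi(\xi):=\phi(\xi,g(\xi))$ is continuous on $\Sigma^{\mathrm{trv}}_{2,t}$, and the invariance $g\circ\sigma=f_{\xi_0,t}\circ g$ yields
\[
\tfrac1n\sum_{j=0}^{n-1}\phi\bigl(F_t^j(\xi,g(\xi))\bigr)=\tfrac1n\sum_{j=0}^{n-1}\tilde\phi(\sigma^j\xi)\qquad\text{for }\xi\in\Sigma^{\mathrm{trv}}_{2,t}.
\]
In particular $(\xi,g(\xi))\in I(\phi,F_t)\iff \xi\in A:=I(\tilde\phi,\sigma|_{\Sigma^{\mathrm{trv}}_{2,t}})$, and $\xi\mapsto(\xi,g(\xi))$ is a homeomorphism $A\to G_t\cap I(\phi,F_t)$ with inverse $\Pi_t$, which preserves Hausdorff dimension and topological entropy. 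Thus it suffices to prove that $A\neq\emptyset$ implies $\dim_{\mathrm H}A\geq 2$ and $h_{\mathrm{top}}(A,\sigma)\geq\log 2$.

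Second, picking $\xi_0\in A$, I would take subsequences $(n_k),(m_k)$ realizing the $\liminf$ and $\limsup$ of $n^{-1}S_n\tilde\phi(\xi_0)$ and extract weak-$*$ accumulation points $\mu_\alpha,\mu_\beta$ of the corresponding empirical measures along the $\sigma$-orbit of $\xi_0$. Since the orbit of $\xi_0$ stays in $\Sigma^{\mathrm{trv}}_{2,t}$, where $\tilde\phi$ is continuous, these measures satisfy $\int\tilde\phi\,d\mu_\alpha=\alpha\neq\beta=\int\tilde\phi\,d\mu_\beta$. Because $\sigma$ on the full shift $\Sigma_2$ has the specification property, the Barreira--Schmeling interlacing construction with $\mu_\alpha,\mu_\beta$ yields a Cantor set $K\subset\Sigma_2$ whose $\tilde\phi$-Birkhoff averages accumulate at both $\alpha$ and $\beta$, with $\dim_{\mathrm H}K=\dim_{\mathrm H}\Sigma_2=2$ and $h_{\mathrm{top}}(K,\sigma)=\log 2$. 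Since $\dim_{\mathrm H}\Sigma^{\mathrm{non}}_{2,t}<2$ by~\cite[Thm.~1]{DM}, the set $K\cap\Sigma^{\mathrm{trv}}_{2,t}=K\setminus\Sigma^{\mathrm{non}}_{2,t}\subset A$ still has $\dim_{\mathrm H}=2$. For entropy, the Pesin--Pitskel entropy of a union being the maximum of the pieces yields $h_{\mathrm{top}}(A,\sigma)=\log 2$ as long as $h_{\mathrm{top}}(\Sigma^{\mathrm{non}}_{2,t},\sigma)<\log 2$; this last strict inequality follows from the variational principle and the fact that $\mathbf{b}_{1/2}$, the unique measure of maximal entropy of $\sigma$, is concentrated on $\Sigma^{\mathrm{trv}}_{2,t}$.

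The main obstacle is the second step. The Barreira--Schmeling machinery is classically stated for a continuous potential on the entire ambient space, whereas $\tilde\phi$ is only defined and continuous on the non-closed $\sigma$-invariant subset $\Sigma^{\mathrm{trv}}_{2,t}$. One must ensure that the weak-$*$ limits $\mu_\alpha,\mu_\beta$ really do give the claimed integrals of $\tilde\phi$ (for instance by arguing that they charge $\Sigma^{\mathrm{trv}}_{2,t}$ fully), that the specification gluings producing $K$ are compatible with the subsequent restriction to $\Sigma^{\mathrm{trv}}_{2,t}$, and that the strict inequality $h_{\mathrm{top}}(\Sigma^{\mathrm{non}}_{2,t},\sigma)<\log 2$ can be justified despite $\Sigma^{\mathrm{non}}_{2,t}$ being non-closed; the dimension subtraction is automatic from the estimate of~\cite{DM}, but the entropy statement is more delicate.
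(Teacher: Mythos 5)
Your reduction to the base---passing to $\tilde\phi(\xi)=\phi(\xi,g(\xi))$ on $\Sigma^{\mathrm{trv}}_{2,t}$ and using the invariant graph---is exactly the paper's first move, and your set $A=I(\tilde\phi,\sigma|_{\Sigma^{\mathrm{trv}}_{2,t}})$ is the same set the paper works with. (Minor caveat: the graph map $\xi\mapsto(\xi,g(\xi))$ is only continuous, not bi-Lipschitz, so it does not automatically ``preserve'' Hausdorff dimension or entropy; what is true, and all that is needed, is the one-sided inequality coming from the Lipschitz projection $\Pi_t$.) The genuine gap is in your second step, and you have in effect identified it yourself: $\tilde\phi$ is defined and continuous only on the non-closed invariant set $\Sigma^{\mathrm{trv}}_{2,t}$, so the Barreira--Schmeling interlacing construction does not apply directly, the weak-$*$ limits of empirical measures need not integrate $\tilde\phi$ correctly, and the saturated Cantor set $K$ your construction would produce lives in $\Sigma_2$ with no guarantee that $K\cap\Sigma^{\mathrm{trv}}_{2,t}$ lands in $A$ unless you control the Birkhoff averages of the (undefined) potential there. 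Listing these obstacles is not the same as overcoming them; as written, the argument does not close.

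The missing idea---which makes everything you want go through---is to \emph{extend} $\tilde\phi$ to a continuous function $\hat\phi$ on all of $\Sigma_2$. Then one applies, on the full shift, the Feng--Lau--Wu theorem quoted in Remark~\ref{Dim_cont} (which handles merely continuous, not only H\"older, observables): $I(\hat\phi,\sigma)\neq\emptyset$ implies $\dim_{\mathrm H} I(\hat\phi,\sigma)=2$ with full entropy. Now intersect with $\Sigma^{\mathrm{trv}}_{2,t}$: since this set is $\sigma$-invariant, the Birkhoff averages of $\hat\phi$ and $\tilde\phi$ agree along any orbit contained in it, so $I(\hat\phi,\sigma)\cap\Sigma^{\mathrm{trv}}_{2,t}=A$; and since $\dim_{\mathrm H}\Sigma^{\mathrm{non}}_{2,t}<2$, this intersection still has $\dim_{\mathrm H}=2$, hence also full entropy by the dimension--entropy proportionality built into the $2^{1/2}\,2^{-|n|}$ metric on $\Sigma_2$. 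The rest of your reduction then gives the conclusion. Your proposed entropy bound via $h_{\mathrm{top}}(\Sigma^{\mathrm{non}}_{2,t},\sigma)<\log 2$ would be a more delicate alternative route; the extension avoids it entirely.
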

\begin{proof}
Consider the continuous function $\tilde{\phi}_t= \phi \circ \Pi^{-1}_t$, which is a priori well-defined only in $\Sigma^{\mathrm{trv}}_{2,t}$ but can be extended to a continuous function on $\Sigma_2$.
If $I(\phi,F_t)\cap G_t\neq\emptyset$, then
$I(\tilde\phi_t, \sigma)\neq\emptyset$ and so it follows (Remark~\ref{Dim_cont}) that $I(\tilde\phi_t, \sigma)$ has full Hausdorff dimension.
From \cite[Theorem 1]{DM}, we have that $\dim_{\mathrm H}(\Sigma^{\mathrm{non}}_{2,t})<2$.
As $\Sigma^{\mathrm{non}}_{2,t}\cup \Sigma ^{\mathrm{trv}}_{2,t}=\Sigma_2$, therefore, $I(\tilde\phi _t, \sigma)\cap \Sigma ^{\mathrm{trv}}_{2,t} $ also has full Hausdorff dimension. In the case of the symbolic shift $\Sigma_2$, full Hausdorff dimension also implies full entropy of the set $I(\tilde\phi _t, \sigma)\cap \Sigma ^{\mathrm{trv}}_{2,t}$, which is $\log(2)$ and coincides with the entropy of $F_t$.

Consider the graph over $I(\tilde\phi _t, \sigma)\cap \Sigma ^{\mathrm{trv}}_{2,t} $,
$$
\tilde{G_t}=\{ (\xi, x) \in \Lambda_t\, : \,\xi \in  I(\tilde\phi _t, \sigma)\cap \Sigma ^{trv}_{2,t} \}.
$$
Then
$\dim_{\mathrm H}(\tilde{G_t})\geq \dim_{\mathrm H}(I(\tilde\phi_t, \sigma)\cap \Sigma ^{\mathrm{trv}}_{2,t}) \geq 2$ and $h_{\mathrm{top}}(\tilde{G_t})=\log(2)$.
Since by construction $\tilde{G_t}\subset I(\phi, F_t)$,
we reach the conclusion required for $I(\phi, F_t)$.
\end{proof}

\begin{question}
Using the fact that there is nonuniform contraction along the trivial fibers is it possible to show that actually $\dim_{\mathrm H}(I(\phi, F_t))=3$ (full Hausdorff dimension) for $F_t$ in the above proposition? Do similar statements hold for the skew-products over the shift with circle fibers studied in \cite{DGM17}, the bony attractors in the results of Kleptsyn and Volk \cite{KV14}, and the higher dimensional bony-attractors of Kudryashov \cite{Kudryashov2010}?
\end{question}

\subsection{mostly contracting systems}\label{mostlycontracting}
Consider a $C^{1+\alpha}$ partially hyperbolic diffeomorphism $f$
on a compact manifold $M$. Recall this means that the tangent
bundle $TM$ splits in three $Df$-invariant subbundles $E^s\oplus
E^c\oplus E^u$, where $E^s$ is uniformly contracting, $E^u$ is
uniformly expanding and $E^c$ called the central bundle is
in-between. The extremal subbundles, $E^s$ and $E^u$, can be
uniquely integrated obtaining the so-called strong stable and
unstable foliations of $M$. A \textit{Gibbs $u$-state} is an
invariant probability measure whose conditional probabilities
along strong unstable leaves are absolutely continuous with
respect to the Lebesgue measure on the leaves
(see~\cite{bonatti2006dynamics} for an introduction). Gibbs
u-states are important in the study of physical measures and in
particular, every ergodic Gibbs $u$-state with negative center
Lyapunov exponents is a physical measure. It said that $f$  has
\textit{mostly contracting center} if all of its Gibbs u-states
have only negative center Lyapunov exponents. These systems were
first studied in~\cite{BV00} where the existence and finiteness of
physical measures were shown. The key property for us is that the
Pesin formula is satisfied with respect to an ergodic Gibbs
u-state $\mu$: the metric entropy $h_\mu(f)$ is equal to the sum
of positive Lyapunov exponents (counting with multiplicity).

Let $\mathrm{Diff}^{1+}(M)$ denote the set of all diffeomorphisms which are $C^{1+\alpha}$ for some $\alpha$. This set can be endowed with the $C^1$-topology, and systems with mostly contracting center form an open set in $\mathrm{Diff}^{1+}(M)$ with this topology~\cite{Yang18}.
One can similarly define systems having a \textit{mostly expanding center} and analogous properties hold for them, including the Pesin formula (\cite{ABV00, Yang18, HUY19}).

\begin{thm}\label{Contract} Let $f$ be $C^{1+\alpha}$ partially-hyperbolic diffeomorphism with mostly contracting center and assume $\dim(E^u)=1$. Then
there exists
an open neighborhood $\mathcal{U}$ of $f$ in $\mathrm{Diff}^{1}(M)$ and
a residual set $\mathcal{R}\subset \mathcal{U}$ such that for $g\in\mathcal{R}$,
the irregular set has full Hausdorff dimension, that is $\dim_H I(g)=\dim M$.
\end{thm}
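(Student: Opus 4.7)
The plan is to apply Proposition~\ref{Dim} to a sequence of $u$-conformal elementary horseshoes $(\Lambda_n)_{n\geq 1}$ of $s$-index $\dim M-1$ whose unstable Hausdorff dimension $\mathrm{d}^u(\Lambda_n)$ tends to $1$. The crucial simplification coming from the hypothesis $\dim E^u=1$ is that every hyperbolic subset inherited from the partial hyperbolicity has a one-dimensional unstable bundle, so $u$-conformality is automatic.

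First I would invoke the $C^1$-openness of the mostly contracting condition (Yang~\cite{Yang18}) to fix a $C^1$-open neighborhood $\U$ of $f$ in $\Diff^1(M)$ such that every $g\in\U$ which is moreover $C^{1+\alpha}$ is partially hyperbolic with mostly contracting center and $\dim E^u=1$. For such a $g$, pick an ergodic Gibbs $u$-state $\mu$. Since its center exponents are negative and $\dim E^u=1$, Pesin's entropy formula---satisfied by $\mu$ because it has absolutely continuous conditionals along the unstable foliation---gives $h_\mu(g)=\lambda^u(\mu)$, the unique positive exponent. Katok's approximation theorem then produces basic hyperbolic sets $\Lambda_n$ of $g$ of $s$-index $\dim M-1$ carrying ergodic measures $\nu_n$ with $h_{\nu_n}(g)\to h_\mu(g)$ and $\lambda^u(\nu_n)\to \lambda^u(\mu)$; the spectral decomposition theorem lets me assume each $\Lambda_n$ is elementary. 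Bowen's formula on the one-dimensional unstable slice gives $\mathrm{d}^u(\Lambda_n)\geq h_{\nu_n}(g)/\lambda^u(\nu_n)\to 1$, and Proposition~\ref{Dim} applied to $\Lambda_n$ yields $\dim_{\mathrm H} I(g)\geq \mathrm{d}_s(\Lambda_n)+\mathrm{d}^u(\Lambda_n)\to \dim M$. This settles the theorem for every $g\in\U\cap \mathrm{Diff}^{1+}(M)$.

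To promote this to a $C^1$-residual statement in $\U$, I would introduce, for each $N\geq 1$,
\[
\U_N=\{\,g\in\U: g \text{ has an elementary horseshoe }\Lambda \text{ with } \mathrm{d}_s(\Lambda)+\mathrm{d}^u(\Lambda)>\dim M-1/N\,\}.
\]
Since $\U_N$ contains $\U\cap\mathrm{Diff}^{1+}(M)$, it is automatically $C^1$-dense in $\U$. To see it is $C^1$-open, one continues an elementary horseshoe $\Lambda$ of $g$ to a family $\Lambda(g')$ over a $C^1$-neighborhood of $g$ via structural stability. On this family the topological entropy varies continuously (by expansivity plus specification) and the unstable Lyapunov exponent varies continuously because the derivative cocycle on the one-dimensional $E^u$ is scalar. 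Hence $\mathrm{d}^u(\Lambda(g'))$, the unique zero of the pressure function $t\mapsto P(-t\log\|Dg'|_{E^u}\|;\Lambda(g'))$, depends continuously on $g'$, and $\U_N$ is $C^1$-open. Setting $\Rc=\bigcap_{N\geq 1}\U_N$ gives the desired residual subset of $\U$ on which Proposition~\ref{Dim} forces $\dim_{\mathrm H} I(g)=\dim M$.

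The main obstacle I expect is precisely the $C^1$-continuity of $\mathrm{d}^u(\Lambda(g'))$ used to conclude openness of $\U_N$; this is where the hypothesis $\dim E^u=1$ enters essentially, since in higher unstable dimension the Hausdorff dimension of hyperbolic sets can fail to be $C^1$-continuous and Bowen's scalar formula no longer applies. A secondary technical point is verifying the Lyapunov-exponent convergence $\lambda^u(\nu_n)\to\lambda^u(\mu)$ in Katok's theorem, which follows from standard refinements since $\log\|Dg|_{E^u}\|$ is continuous on $M$ and $\nu_n\to\mu$ in the weak-$\ast$ topology.
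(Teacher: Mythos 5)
Your overall architecture matches the paper's: fix $\mathcal U$ via Yang's $C^1$-openness of the mostly contracting condition, pass to a dense $C^{1+\alpha}$ element, take an ergodic Gibbs $u$-state $\mu$, use Pesin's formula $h_\mu=\lambda^u(\mu)$ together with Katok's approximation to produce elementary horseshoes with $\mathrm d^u$ close to $1$, continue them to an open--dense set, intersect over $\epsilon=1/n$, and conclude with Proposition~\ref{Dim}. There is, however, a genuine difference in the middle and a genuine gap at the end.

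On the middle step you bypass the paper's detour through \cite[Thm.~B', Def.~7.4, Prop.~7.8]{ACW20}: instead of $C^1$-perturbing the Katok horseshoe to a standard affine horseshoe and computing the dimension of its unstable projection by Falconer's formula for self-affine sets with the open set condition, you use directly the variational-principle consequence of the $C^{1+\alpha}$ Bowen--Ruelle pressure equation, $\mathrm d^u(\Lambda_n)\ge h_{\nu_n}(g)/\lambda^u(\nu_n)$, and the Katok refinement that one may take $\nu_n\rightharpoonup\mu$ so that the exponents converge. For $C^{1+\alpha}$ maps in $\mathcal U$ this is correct and is indeed a cleaner route to $\mathrm d^u(\Lambda_n)\to 1$.

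The gap is in your argument for $C^1$-openness of $\mathcal U_N$. You identify $\mathrm d^u(\Lambda(g'))$ with the unique zero of $t\mapsto P(-t\log\|Dg'\vert_{E^u}\|;\Lambda(g'))$ and argue that this zero varies continuously in $g'$. The continuity of the pressure zero is fine, but the identification of that zero with the Hausdorff dimension of the unstable slice is the Bowen--Ruelle--Manning theorem, which is a $C^{1+\alpha}$ statement; for merely $C^1$ horseshoes the equality is not available (and can fail), so you cannot conclude that $\mathrm d^u(\Lambda(g'))$ itself is continuous in the $C^1$ topology from this. The paper avoids the issue in exactly the place you flagged: because $\dim E^u=1$, the continuation conjugacy $\phi$ from $\Lambda_g$ to $\Lambda_{g'}$ is, on each unstable slice, bi-H\"older with exponent $\alpha(g')\to1$ as $g'\to g$ (KH95, Ch.~19), and this alone gives $\dim_{\mathrm H}\bigl(W^u_{\mathrm{loc}}\cap\Lambda_{g'}\bigr)\ge\alpha(g')\,\mathrm d^u(\Lambda_g)$, which is what openness of $\mathcal U_N$ requires. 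You should replace the pressure-continuity claim by this H\"older-exponent argument; the same device is also what lets one push the conclusion of Proposition~\ref{Dim}, which as stated assumes $C^{1+\alpha}$, to a $C^1$-generic $g\in\mathcal R$.
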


The proof of the above theorem is given in Section~\ref{Contractproof}. Here we will describe the key steps, which are as follows.
\begin{enumerate}
\item Existence of a dense set of maps in $\mathcal{U}$ having a hyperbolic measure satisfying the Pesin Formula.
\item Approximating the measure in entropy by a horseshoe and afterwards perturbing the horseshoe in $C^1$-topology to the so-called standard affine horseshoe.
\item Projecting the horseshoe in the unstable direction, we can obtain an attractor of an affine iterated function system on the line. This permits to calculate the Hausdorff dimension of this set, which is going to be relatively big because of the Pesin formula.
\item Calculating the dimension of the irregular set of the horseshoe.
\end{enumerate}

\subsubsection{Remarks on possible extensions of Theorem~\ref{Contract}}
We are interested in extending Theorem~\ref{Contract} to
diffeomorphisms with mostly expanding center or conservative
systems in dimension 3. The first three of the above steps
actually still hold in these cases. Except that now we would be
dealing with an attractor of an affine iterated function system
\textit{in the plane} as the exponent in the center direction will
be also positive. Under some algebraic assumptions, it is possible
to conclude that the Hausdorff dimension of the attractor is equal
to the affinity dimension for a system of affine contractions in
the plane with a diagonal linear part
(\cite[Proposition~7.2]{MS19}). Then, one can calculate the
affinity dimension using, for example, the formulas in \cite{Mor19}.
The main problem is that it is unknown how to calculate the
Hausdorff dimension of \textit{the irregular set} for these
iterated systems. Since they are non-conformal, we cannot apply
directly~\cite{BS00}, as was done in Proposition~\ref{Dim}. This
leads to the following question.

\begin{question}
Consider a planar contractive, affine, diagonal iterated function system as in \cite{MS19}(or \cite{PW94}) for which the Hausdorff dimension of the attractor is known explicitly. Show that the irregular set has full Hausdorff dimension.
\end{question}

\subsection{$C^1$-generic diffeomorphisms with no dominated splitting}\label{nodomination}
Motivated by the Newhouse's theorem on approximation by horseshoes with large Hausdorff dimension for surface diffeomorphisms with homoclinic tangencies~\cite{Newhouse78}, Buzzi, Crovisier and Fisher have recently obtained a similar result in higher dimension in~\cite[Theorem 6]{BCF18}.
Using this extension and Proposition~\ref{Dim} we can obtain the next result on the full dimension of the irregular set.
To state the result precisely, we need some notation.
Each periodic point $P$ of a diffeomorphisms $f$ of a compact manifold of dimension $d$ admits Lyapunov exponents $\lambda_1(f, P)\geq \dots \geq \lambda_d(f,P)$, listed with multiplicity.
We set $\lambda^+_i= \max\{\lambda_i, 0\}$ and $\lambda^-_i= \max\{-\lambda_i, 0\}$.
Also we denote
$$
\Delta^+(f,P)= \sum_{i=1}^d \lambda^+_i \qquad \text{and} \qquad
\Delta^-(f,P)= \sum_{i=1}^d \lambda^-_i.
$$
\begin{thm}\label{thm:0219c}
For a  $C^1$-generic diffeomorphism $f$ of a compact manifold $M$, if $f$ has a periodic point $P$ such that  the homoclinic class $H(P)$ of $P$ has no dominated splitting and $\Delta^-(f,P)\geq \Delta^+(f,P)$, then  $\dim_{\mathrm H} I(f)= \dim M$.
\end{thm}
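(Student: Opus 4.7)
The plan is to reduce the theorem to a direct application of Proposition~\ref{Dim}, with the approximation-by-horseshoes result of Buzzi, Crovisier and Fisher~\cite{BCF18} supplying horseshoes that saturate the dimension. Since the trivial upper bound $\dim_{\mathrm H} I(f) \leq \dim M$ always holds, it is enough to exhibit a sequence $(\Lambda_n)_{n\geq 1}$ of $u$-conformal elementary horseshoes of $f$ satisfying
\[
\mathrm{d}_s(\Lambda_n)+\mathrm{d}^u(\Lambda_n) \longrightarrow \dim M \quad \text{as } n\to\infty.
\]
Granting such a sequence, Proposition~\ref{Dim} applied to each $\Lambda_n$ yields $\dim_{\mathrm H} I(f) \geq \mathrm{d}_s(\Lambda_n)+\mathrm{d}^u(\Lambda_n)$, and letting $n\to\infty$ finishes the proof.

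First I would invoke~\cite[Theorem 6]{BCF18} inside the homoclinic class $H(P)$. The absence of a dominated splitting on $H(P)$ provides, via $C^1$-small perturbations supported near periodic orbits of $H(P)$, the flexibility to create periodic points whose Lyapunov data can be essentially prescribed at will (subject only to the averaged constraints imposed by the exponents of $P$); these in turn unfold into elementary horseshoes of the perturbed diffeomorphism. A standard Baire-category argument, combining persistence of elementary horseshoes with lower-semicontinuity of stable dimension and unstable Hausdorff dimension in the $C^1$-topology, then transfers existence of such horseshoes to the $C^1$-generic $f$ of the hypothesis. The numerical assumption $\Delta^-(f,P)\geq \Delta^+(f,P)$ enters precisely at the quantitative step: it is the condition ensuring that the optimal allocation of the perturbed Lyapunov exponents drives $\mathrm{d}_s+\mathrm{d}^u$ up to the full value $\dim M$, rather than stalling at some strictly smaller value dictated by an asymmetry between total expansion and total contraction along the orbit of $P$.

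The main obstacle will be arranging the $u$-conformality hypothesis of Proposition~\ref{Dim} for the horseshoes produced by~\cite{BCF18}: $u$-conformality is not generic among horseshoes in dimension bigger than two. My strategy is to reduce the problem to the one-dimensional-unstable case, in which $Df|_{E^u}$ is tautologically scalar and hence $\Lambda_n$ is automatically $u$-conformal. Under the assumption $\Delta^-\geq\Delta^+$, the optimal way to maximize $\mathrm{d}_s+\mathrm{d}^u$ is precisely to concentrate the available positive exponent budget into a single unstable direction while distributing the contraction among the remaining $\dim M - 1$ directions; this is the regime in which the BCF construction, followed by a further small $C^1$-perturbation absorbing the (finite-codimension) $u$-conformality condition on the unstable derivative cocycle of each $\Lambda_n$, naturally operates. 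Once $u$-conformality is secured for each $\Lambda_n$ along the generic residual set, Proposition~\ref{Dim} closes the argument as outlined above.
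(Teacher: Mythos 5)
Your high-level plan — approximate by horseshoes via~\cite{BCF18}, plug into Proposition~\ref{Dim}, and close with a Baire argument using persistence of horseshoes — does track the paper's proof. But the crucial technical point, how to guarantee $u$-conformality for the horseshoes produced by~\cite{BCF18}, is handled differently and, as written, has a genuine gap.

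The paper does \emph{not} reduce to the case of one-dimensional unstable bundle. What~\cite[Theorem 4.1 and Proposition 4.6]{BCF18} actually provides, using the absence of a dominated splitting on $H(P)$ together with $\Delta^-(f,P)\geq\Delta^+(f,P)$, is a $C^1$-nearby $C^2$-diffeomorphism $g$ possessing an \emph{affine linear} horseshoe $K_g$ whose unstable index equals $d_u=\dim E^u_P$ (not $1$), whose stable index equals $d_s=\dim E^s_P$, and which is already $u$-conformal by construction: the lack of domination is exploited precisely to mix the unstable exponents so that $Dg|_{E^u}$ over $K_g$ becomes a scalar times an isometry. Moreover the construction gives $\mathrm{d}^u(K_g)\geq d_u$, hence $\mathrm{d}_s(K_g)+\mathrm{d}^u(K_g)=\dim M$, so Proposition~\ref{Dim} applies to $g$ \emph{directly} with no extra reduction. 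Your proposed reduction to a $1$-dimensional unstable direction is not justified: the unstable index of a horseshoe in $H(P)$ cannot in general be lowered by a small $C^1$-perturbation (absence of a dominated splitting on $H(P)$ does not by itself give index variability), and even if it could, you would then have to show the single unstable exponent is small enough compared to the entropy for $\mathrm{d}^u\to 1$ — a different quantitative problem than the one $\Delta^-\geq\Delta^+$ is designed to solve. Your parenthetical fallback of ``a further small $C^1$-perturbation absorbing the (finite-codimension) $u$-conformality condition'' is also inconsistent with the $1$-dimensional reduction (no perturbation is needed if $d_u=1$) and is not viable in higher unstable dimension, where conformality of the unstable cocycle is infinite-codimension, not finite. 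The correct observation is simply that~\cite{BCF18} hands you $u$-conformality for free, via the affine model.

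One more small discrepancy: the paper's passage from the dense set of such $g$'s to the residual set does not invoke lower semicontinuity of $\mathrm{d}^u$ in the $C^1$-topology (which is delicate when $d_u>1$). Instead it uses the H\"older conjugacy between $K_g$ and its continuation $K_h$, whose H\"older exponent tends to $1$ as $h\to g$, to show $\dim_{\mathrm H} I(h)>\dim M-\epsilon$ on a whole $C^1$-neighborhood $\mathcal U_\epsilon$ of $g$; one then intersects over $\epsilon=1/n$. You should replace the semicontinuity claim by this H\"older-conjugacy estimate.
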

See Section~\ref{thm:0219c_proof} for the proof.

\section{Geometric Lorenz flows}\label{s:Lorenz}
In this section, we show that every geometric Lorenz flow has the irregular set of full topological entropy and full Hausdorff dimension.

The irregular set $I(f)$ of  a continuous flow $f=\{f^t\}_{t\in\mathbb{R}}$ on a compact metric space $X$ is defined as the set of points $x\in X$ such that there exists a continuous function $\phi : X\to \mathbb R$ for which the time average
\begin{equation}\label{eq:0106}
\lim _{T\to\infty} \frac{1}{T} \int ^T_0 \phi \circ f^t (x) \,dt
\end{equation}
does not exist. We will also use the set $I(\phi , f)$ defined as
the set of points $x\in X$ such that \eqref{eq:0106} does not
exist. Recall that the Abramov-Bowen
formula~(\cite[Proposition~21]{bowen1971entropy}; see also
\cite{A59} as its origin for measurable flows) establishes that
\[
h_{\mathrm{top}}(f^1)=\frac{1}{|t|} h_{\mathrm{top}}(f^t) \quad
\text{for all $t\not= 0$}.
\]
So, the topological entropy $h_{\mathrm{top}}(f)$ of $f$ is
defined as the topological entropy of the time one map $f^1$,
i.e.~$h_{\mathrm{top}}(f)= h_{\mathrm{top}}(f^1)$. We also refer
to~\cite[p.~185]{BR75} for an equivalent definition of the
topological entropy of $f$ given as a straightforward analogue of
the usual (i.e.~Bowen-Dinaburg's) topological entropy for a
continuous map on a compact metric space.

In order to define the topological entropy $h_{\mathrm{top}}(A, f)$ of $f$ on a (not necessarily compact) invariant set $A$ of a totally bounded space $X$, we have two alternatives in literature.
The first alternative is to extend the definition of the Pesin-Pitskel'  topological entropy for maps \cite{PP84} to flows,
which was done by Thompson \cite[Section 4]{T10} and Shen-Zhao \cite{SZ12}.
For this definition of the topological entropy for flows, the Abramov-Bowen type formula is also proven, i.e.
\[
h_{\mathrm{top}}(A,f)=\frac{1}{|t|}h_{\mathrm{top}}(A,f^t) \quad \text{for all $A\subset X$ and $t\not=0$}.
\]
The other alternative is,  as Pacifico and Sanhueza~\cite{PS19} recently did, to extend  Bowen-Hausdorff version of the topological entropy for maps \cite{Bow73} to flows. They also showed  the Abramov-Bowen type formula for this topological entropy.
When $A$ is compact, as we notified in Section \ref{sec2} all the previously-mentioned versions  of the topological entropy of a continuous map on $A$ coincide, and so is a continuous flow on $A$.

To state our main result clearly,
we employ the construction of geometric Lorenz flows of \cite{MPR20}.
Given a $C^1$ vector field $V$ of $\mathbb R^3$, we denote by $f_V=\{f^t_V\} _{t\in \mathbb R}$ the continuous flow generated by $V$.
We recall that in the setting of \cite[Section 2]{MPR20},
a geometric Lorenz flow $f_V$ generated by a   $C^1$   vector field $V$  satisfies that
\[
\text{$V(x,y,z) =(\lambda _1x,\lambda _2 y,\lambda _3z)$ on $[-1,
1]^3$} \quad \text{with $0<-\lambda _3<\lambda _1<-\lambda _2$.}
\]
Moreover, the Poincar\'e map $P: S_0 \to S$ of $f_V$ with $$ S=[-1,1]^2 \times \{1\} \quad \text{and} \quad S_0 =\{ (x,y,1) \in S \mid x\neq 0\}$$ is
 a skew-product map of the form
 \begin{equation}\label{eq:0905}
 \text{$P(x,y,1)=(F(x),G(x,y) ,1)$ for $x\in[-1,1]\setminus \{0\}$ and $y\in [-1,1]$ }
 \end{equation}
 where $F:[-1,1]\setminus \{0\} \to [-1,1]$ is a $C^2$ piecewise expanding map and $y\mapsto G(x,y)$ is a contraction map for every $x\in [-1,1]\setminus \{0\}$.
Let $K$ be the closure  of $\{ f^t_V(x) \mid x\in S_0, \; t\geq 0\}$ and
$$ \Lambda _V := \bigcap _{t\geq 0}f^t_V(K).$$
Then, one can take an open set $D$ including $ \Lambda _V $ such that for any $C^1$-close vector field $W$ to $V$,  the set $\Lambda _W:= \bigcap _{t\in\geq 0}f^t_W(D)$ is transitive and
singular-hyperbolic.
That is, $f_W$ is partially hyperbolic with volume expanding center-unstable subbundle  on $\Lambda _W$ and all singularities of $W$ in $\Lambda _W$ are hyperbolic.
The set $\Lambda_W$ is called the \emph{geometric Lorenz attractor} of $f_W$ while  $D$ is said to be a \emph{trapping region}.
Finally,
we denote by $\mathcal X^1(D)$ the space of $C^1$ vector fields on $D$.

\begin{thm}\label{prop:0904}
Let $V$ be a $C^1$ vector field which defines a geometric Lorenz flow $f_V$ with a trapping region $D$.
Then, there is a neighborhood $\mathcal N\subset \mathcal X^1(D)$
of $V$  such that  for any $W\in \mathcal N$,
\begin{itemize}
\item[$\mathrm{(i)}$] $h_{\mathrm{top}} (I(f_W), f_W) = h_{\mathrm{top}} (f_W)$ \quad (full topological entropy).
\end{itemize}
Furthermore, if $- \frac{\lambda
_2}{\lambda _1}
> -\frac{\lambda _3}{\lambda _1} + 2$, then
  for any $W\in \mathcal N$,
\begin{itemize}
\item[$\mathrm{(ii)}$] $\dim_{\mathrm H} I(f_W)  =3$ \quad (full Hausdorff dimension).
\end{itemize}
\end{thm}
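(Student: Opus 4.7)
The plan is to reduce the flow statement to one on the Poincar\'e map $P$, and further to the one-dimensional quotient $F$, by successive lifts that track both topological entropy and Hausdorff dimension. Robustness in $W$ will be automatic because the whole geometric structure (trapping region, cross-section, skew-product form, eigenvalue inequalities) is $C^1$-open, cf.~\cite{MPR20}; so it is enough to argue for $V$ itself and replace $f_V$ by $f_W$ at the end.

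\textbf{Base and fibrewise lift.} The quotient $F:[-1,1]\setminus\{0\}\to[-1,1]$ is a $C^2$ piecewise expanding map, so Theorem~\ref{thm-entropy-one-dimensional} gives $h_{\mathrm{top}}(I(F),F)=h_{\mathrm{top}}(F)$. Moreover $F$ is topologically exact and admits an absolutely continuous invariant probability, and \cite{Chung2010} then gives $\dim_{\mathrm H}I(F)=1$; alternatively one approximates $h_{\mathrm{top}}(F)$ by $u$-conformal Markov subhorseshoes of $F$ and applies Proposition~\ref{Dim} (these are automatically $u$-conformal in dimension one). The projection $\pi\colon S\to[-1,1]$, $\pi(x,y,1)=x$, is a semi-conjugacy $\pi\circ P=F\circ\pi$, and the skew-product argument used to prove~\eqref{eq:pp} gives
$$
I(F)\times[-1,1]\subset I(P).
$$
Since the fibres of $\pi$ are uniformly contracted by $y\mapsto G(x,y)$, they have zero fibrewise topological entropy; Bowen's fibre inequality~\cite[Thm.~17]{bowen1971entropy} then yields $h_{\mathrm{top}}(P)=h_{\mathrm{top}}(F)$, so the inclusion forces $h_{\mathrm{top}}(I(P),P)=h_{\mathrm{top}}(P)$. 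For part (ii), the inclusion immediately gives $\dim_{\mathrm H}I(P)\geq 2$.

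\textbf{Suspension step.} The flow $f_V$ is a suspension of $P$ over the return-time function $\tau\colon S_0\to(0,\infty)$. Given $z\in I(P)$ with witnessing continuous observable $\phi_0\colon S\to\mathbb R$, a standard cocycle construction produces a continuous extension $\tilde\phi$ on $D$ whose integral over one return arc above $z$ reproduces $\phi_0(z)$; the time averages of $\tilde\phi$ under $f_V$ then fail to converge along the whole flow-arc through $z$, so the suspension of $I(P)$ is contained in $I(f_V)$. Saturating by the flow adds exactly one to the Hausdorff dimension and, after the Abramov--Bowen normalisation~\cite{A59,bowen1971entropy}, preserves the entropy equality, giving
$$
h_{\mathrm{top}}(I(f_W),f_W)=h_{\mathrm{top}}(f_W),\qquad \dim_{\mathrm H}I(f_W)\geq 3,
$$
with the matching upper bound $\dim_{\mathrm H}I(f_W)\leq \dim D=3$ being trivial. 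The same argument applies to every $W$ in a sufficiently small $C^1$-neighborhood $\mathcal N$ of $V$ because the cross-section, Poincar\'e map, quotient map and their quantitative regularity vary continuously with $W$.

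The main obstacle is the Hausdorff-dimension lift in the fibrewise step. The topological-entropy lift is essentially automatic (Bowen's fibre inequality plus the product structure), but realising the full dimension $\dim_{\mathrm H}(I(F)\times[-1,1])=2$ inside $S$ requires that the coordinates identifying $S$ with $[-1,1]^2$ (or equivalently the stable foliation of $P$ and its holonomies) do not collapse Hausdorff dimension. This is where the hypothesis $-\lambda_2/\lambda_1>-\lambda_3/\lambda_1+2$ enters: it forces the $y$-contraction $G(x,\cdot)$ to dominate the $x$-singularity of $F$ at $0$ strongly enough that the stable foliation of $P$ is H\"older with exponent arbitrarily close to $1$, in the spirit of the regularity estimates of~\cite{PV88,SS92,BCF18}. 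Verifying this delicate regularity, together with building the continuous cocycle extension $\tilde\phi$ in the suspension step, is where the technical content of Section~\ref{s:proof} will concentrate.
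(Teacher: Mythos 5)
Your proposal takes a genuinely different route from the paper for both parts. The paper proves (i) by invoking the star property of singular-hyperbolic flows and the entropy approximation of star flows by suspended horseshoes~\cite[Prop.~1.1]{LSWW20}, then applying Thompson's theorem for suspension flows with specification; it proves (ii) by pulling back the regular Cantor sets $(C_k)$ of~\cite{MPR20} (with $\dim_{\mathrm H}C_k\to 1$) to horseshoes $\Lambda_k$ of $P$ away from the singularity, then suspending only over those compact pieces. Your plan instead tries to descend to the one-dimensional quotient $F$, establish full entropy and full dimension for $I(F)$ directly, and lift back up through the skew-product and the suspension. This is appealingly economical, but it has several genuine gaps.

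First, you treat $f_V$ as a global suspension of $P$ over the return time $\tau$, but $\tau$ blows up at the singularity $x=0$, so $\tau$ is not a bounded continuous roof and the suspension space is not compact. The paper avoids this precisely by suspending only over the compact horseshoes $\Lambda_k$ (resp.\ the flow-horseshoes $\Gamma_\epsilon$), where the return time is uniformly bounded and continuous. Your ``suspension step'' is therefore not well-posed as stated.

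Second, and more seriously, the inclusion $I(F)\times[-1,1]\subset I(P)$ cannot be lifted to the flow by a ``standard cocycle construction.'' Time averages of a continuous observable $\tilde\phi$ under the flow correspond not to ordinary Birkhoff averages of $\iota(\tilde\phi)$ under $P$, but to $\rho$-weighted ratios $S_n\iota(\tilde\phi)/S_n\rho$. Divergence of ordinary Birkhoff averages for $P$ does not imply divergence of the $\rho$-weighted ones, so $I(P)\neq\emptyset$ alone says nothing about $I(f_V)$. The paper handles this by introducing the $\rho$-weighted irregular set $I_\rho(\phi,T)$, proving Lemma~\ref{lem:0904} to exhibit a point in it, and then invoking \cite[Lemma~5.4]{T10} to translate between the roof picture and the flow picture. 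This is the technical core of the proof and is missing from your argument; the same issue affects your entropy lift, because the Abramov--Bowen formula weights entropy by $\int\rho\,d\mu$, so a measure maximizing $h_\mu(P)$ need not maximize $h_\mu(P)/\int\rho\,d\mu$ for the flow.

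Third, your dimension lower bound for $I(F)$ does not go through as written. The quotient map $F$ is discontinuous at $0$ and only piecewise $C^2$, so it is not in the class of topologically exact $C^2$-interval maps to which the result of \cite{Chung2010} applies. The alternative you sketch (``approximate $h_{\mathrm{top}}(F)$ by Markov subhorseshoes and apply Proposition~\ref{Dim}'') also fails, because approximating the entropy of $F$ does not by itself approximate the Hausdorff dimension of the subhorseshoes; one needs control of the Lyapunov exponents, and this is exactly what the Cantor-set construction of \cite{MPR20} provides. Finally, your reading of the eigenvalue hypothesis is close but not quite right: it is used to guarantee that the strong stable foliation of $P_W$, hence the quotient map $F_W$, is of class $C^2$ for all $W$ near $V$ (not merely H\"older with exponent near $1$); without this the Cantor-set results of \cite{MPR20} cannot be invoked for the perturbed vector fields.
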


In \cite{KirLiSo} (see
also~\cite{yang2020historical}), it was proven that the irregular
set of any geometric Lorenz flow is residual in the trapping
region. More precisely, they constructed a residual set
$\mathcal{R}$ consisting of points whose partial time averages for
some continuous function oscillate between the Dirac measures of
the singularity and a periodic orbit. However, since both of the
measures have zero topological entropy and zero Hausdorff
dimension, it seems that the topological entropy and Hausdorff
dimension of $\mathcal{R}$ is zero. (cf.~\cite{CZ13})}

\begin{rem}
The condition  $- \frac{\lambda _2}{\lambda _1} > -\frac{\lambda_3}{\lambda _1} + 2$ in Theorem \ref{prop:0904} is only used to ensure the regularity of the quotient map along the strong stable foliation of the  Poincar\'e map $P_W$ of $f_W$.
This regularity is automatically satisfied when $W=V$ by construction (we will explain it more precisely in~\S\ref{ss:LH}).
Thus,  one gets that $\dim_{\mathrm H} I(f_V) =3$ without the extra condition.
\end{rem}

As in the map case, the key ingredient is  an approximation theorem  by horseshoes, so before starting the proof of Theorem~\ref{prop:0904}, we  recall the definition of horseshoes for continuous flows.
Given a homeomorphism $T: X \to X $ on a compact metric space $X$ and a continuous function $\rho : X \to \mathbb R_+:= \{ z>0\}$, the \emph{suspension flow} $T_\rho=\{ T^t_\rho\} _{t\in \mathbb R}$ of $T$ over $\rho$ is defined as
\[
T^t_\rho : X_\rho \to X_\rho,  \qquad T^t_{\rho}( x,z)=( x, z+t)
\]
where $X _\rho$ is the $\rho$-suspension space  given by
\[
X  _\rho = \left\{ (  x, z) \mid   x \in X, \;  0\leq z \leq
\rho(  x) \right\} / \sim
\]
with $(  x,\rho(  x)) \sim (T(  x),0)$.
For a continuous flow $f=\{f^t\}_{t\in \mathbb R}$,  a compact $f$-invariant set $\Gamma$ is called a \emph{horseshoe} of $f$, if there exists a suspension flow $\sigma _\rho=\{ \sigma ^t_\rho\} _{t\in \mathbb R}$ of the left-shift operation $\sigma :\Sigma \to \Sigma$ of a two-sided topologically mixing subshift of finite type with a finite alphabet over a continuous function $\rho$, and a homeomorphism $\pi :\Sigma _\rho \to \Gamma$ such that
\[
f^t \circ \pi = \pi \circ \sigma _\rho ^t \quad \text{for every $t\in \mathbb R$}.
\]
We will use the following preliminary lemma, whose proof  will be given in~\S\ref{p:55}.

\begin{lem}\label{lem:0904}
Let $\Sigma \subset \{1, 2,\ldots , N\} ^{\mathbb Z}$ be  a topologically mixing subshift of finite type with $N\geq 2$ and $\sigma : \Sigma \to \Sigma$  the left-shift operation.
Let $\rho : \Sigma \to \mathbb R_+$ be a continuous function.
Then, there exists a continuous function $\phi :\Sigma_\rho\to \mathbb R$ such that $I(\phi , \sigma_\rho) \neq \emptyset$.
Moreover,
\begin{enumerate}
\item[(i)]
if $\Gamma$ is a horseshoe of a continuous flow $f$ then $I(f\vert _{\Lambda }) \neq \emptyset$;
\item[(ii)]
if $\sigma:\Sigma\to \Sigma$ is topologically conjugate by a
homeomorphism $\Pi:\Lambda\to \Sigma$ to the restriction of  a
$C^{1+\alpha}$-diffeomorphism $T$ on a $u$-conformal elementary
horseshoe $\Lambda$,
$$
   \dim_H I(\hat\phi,(T\vert _\Lambda )_{\hat\rho}) \geq 1 + \mathrm{d}_s(\Lambda) +\mathrm{d}^u(\Lambda),
$$
where $\hat\rho=\rho\circ \Pi$ and $\hat{\phi}=\phi \circ \hat\Pi$ with $\hat\Pi : \Lambda_{\hat\rho} \to \Sigma_\rho$ given by $\hat\Pi(x,t)=(\Pi(x),t)$.
\end{enumerate}
\end{lem}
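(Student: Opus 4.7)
The plan is to construct $\phi$ from a separating observable $\psi$ on the base together with a bump profile in the flow direction, and then exhibit an oscillating orbit via the specification property. Since $\sigma$ is a topologically mixing subshift of finite type with $N\geq 2$, it has the specification property and admits two distinct periodic measures $\mu_1\neq\mu_2$. Fix $g\in C([0,1],\mathbb R)$ with $g(0)=g(1)=0$ and $C:=\int_0^1 g(s)\,ds>0$, and set
$$
\phi(y,t) = \psi(y)\,g(t/\rho(y)),
$$
which is continuous on $\Sigma_\rho$ because $g$ vanishes at the endpoints. A direct change of variable shows that, for $y\in\Sigma$ and $T_n(y)=S_n\rho(y)$,
$$
\frac{1}{T_n(y)}\int_0^{T_n(y)}\phi\circ\sigma_\rho^t(y,0)\,dt \;=\; C\,\frac{S_n(\psi\rho)(y)}{S_n\rho(y)},
$$
so, since $\phi$ is bounded, the flow time average of $\phi$ at $(y,0)$ converges iff the weighted Birkhoff ratio on the right does. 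Because $\rho>0$ and the probability measures $\rho\mu_i/\int\rho\,d\mu_i$ are distinct (their supports sit on disjoint periodic orbits), there is $\psi\in C(\Sigma)$ such that $r_i:=\int\psi\rho\,d\mu_i/\int\rho\,d\mu_i$ satisfies $r_1\neq r_2$; fix such a $\psi$.

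Specification for $\sigma$ now yields a point $y^*\in\Sigma$ whose empirical measures have both $\mu_1$ and $\mu_2$ as subsequential weak-$*$ limits, so along the corresponding subsequences the weighted ratio converges to $r_1$ and $r_2$ respectively. Thus the flow time average at $(y^*,0)$ oscillates between $Cr_1$ and $Cr_2$ and $(y^*,0)\in I(\phi,\sigma_\rho)\neq\emptyset$, which proves the main claim. For part (i), any horseshoe $\Gamma$ of a continuous flow $f$ comes by definition with a conjugating homeomorphism $\pi:\Sigma_\rho\to\Gamma$; then $\phi\circ\pi^{-1}\in C(\Gamma)$ and its irregular set contains $\pi(y^*,0)$, so $I(f\vert_\Gamma)\neq\emptyset$.

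For part (ii), the map $\hat\Pi$ conjugates the two suspension flows, so $I(\hat\phi,(T\vert_\Lambda)_{\hat\rho}) = \hat\Pi^{-1}(I(\phi,\sigma_\rho))$. Write $\tilde\psi=\psi\circ\Pi$, let $\nu_i$ denote the pull-back of $\mu_i$ to $\Lambda$, and consider on $\Lambda$ the continuous function
$$
\Phi = (\tilde\psi-r_1)\,\hat\rho.
$$
Then $\int\Phi\,d\nu_1=0$ while $\int\Phi\,d\nu_2=(r_2-r_1)\int\hat\rho\,d\nu_2\neq 0$, so $\Phi$ is not cohomologous to a constant under $T\vert_\Lambda$. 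Remark~\ref{Dim_cont}, together with the specification-based Barreira--Schmeling construction underlying it, then produces a set $A\subset I(\Phi,T\vert_\Lambda)$ of Hausdorff dimension $\geq \mathrm{d}_s(\Lambda)+\mathrm{d}^u(\Lambda)$ whose points simultaneously realise both $\nu_1$ and $\nu_2$ as subsequential limits of their empirical measures. At each $x\in A$, the pair $((1/n)S_n(\tilde\psi\hat\rho)(x),(1/n)S_n\hat\rho(x))$ has the two distinct accumulation points $(\int\tilde\psi\hat\rho\,d\nu_i,\int\hat\rho\,d\nu_i)$, so the weighted ratio oscillates between $r_1$ and $r_2$ and $(x,0)\in I(\hat\phi,(T\vert_\Lambda)_{\hat\rho})$. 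Since this flow-irregular set is flow-invariant and $\hat\rho\geq \eta>0$ on $\Lambda$, it contains $A\times[0,\eta)$, and the product formula for Hausdorff dimension yields
$$
\dim_{\mathrm H} I(\hat\phi,(T\vert_\Lambda)_{\hat\rho}) \;\geq\; \dim_{\mathrm H} A + 1 \;\geq\; 1+\mathrm{d}_s(\Lambda)+\mathrm{d}^u(\Lambda).
$$

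The principal obstacle is the passage from non-convergence of the scalar Birkhoff average of $\Phi$ to non-convergence of the weighted ratio $S_n(\tilde\psi\hat\rho)/S_n\hat\rho$: the former merely excludes the single limiting value $r_1$. One therefore needs to control the \emph{pair} $(S_n(\tilde\psi\hat\rho),S_n\hat\rho)$ simultaneously, which is exactly what the specification-based construction with two prescribed subsequential limit measures accomplishes, while inheriting full Hausdorff dimension from the $u$-conformality of $\Lambda$ via Remark~\ref{Dim_cont}.
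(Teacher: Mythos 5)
The first half of your argument (existence of $\phi$ with $I(\phi,\sigma_\rho)\neq\emptyset$, and item (i)) is correct and essentially parallel to the paper's: the paper builds $\phi(x,z)=\psi(x)+\frac{z}{\rho(x)}(\psi\circ\sigma(x)-\psi(x))$ with $\iota(\phi)=\psi$ and explicitly writes down an orbit that shadows two periodic orbits along blocks of rapidly growing lengths, whereas you use the bump profile $\psi(y)g(t/\rho(y))$ with $\iota(\phi)=C\psi\rho$ and cite the specification fact abstractly. Both give a point where the weighted ratio $S_n(\iota(\phi))/S_n\rho$ oscillates, via the same formula \eqref{eq:0106b}.

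For item (ii) there is a genuine gap, and you have in fact flagged it yourself. Your reduction passes through $\Phi=(\tilde\psi-r_1)\hat\rho$, and you observe correctly that $I(\Phi,T|_\Lambda)\neq\emptyset$ together with Remark~\ref{Dim_cont} only gives a full-dimensional set where $\tfrac{1}{n}S_n\Phi$ fails to converge, i.e.\ fails to have $r_1$ as its sole limit; this does not make $S_n(\tilde\psi\hat\rho)/S_n\hat\rho$ divergent. You then assert that the specification-based construction underlying Remark~\ref{Dim_cont} produces a set $A$ of dimension $\geq \mathrm{d}_s(\Lambda)+\mathrm{d}^u(\Lambda)$ all of whose points have both $\nu_1$ and $\nu_2$ as subsequential empirical limits, but this claim is not a consequence of anything quoted: Remark~\ref{Dim_cont} and \cite[Thm.~1.2]{FLW02} as you invoked them only assert the lower bound for the Hausdorff dimension of the \emph{whole} $\Phi$-irregular set, without prescribing which pairs of limit measures its points realize. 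You would have to re-run the Barreira--Schmeling/Feng--Lau--Wu moving-target construction with a two-dimensional target (the pair of Birkhoff sums of $\tilde\psi\hat\rho$ and of $\hat\rho$) to substantiate it, and that is exactly the missing step.

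The paper sidesteps this entirely by using the $\rho$-weighted version of Feng--Lau--Wu's theorem directly: \eqref{eq:0219e} states that for a $u$-conformal elementary horseshoe, $I_\rho(\phi,f|_\Lambda)\neq\emptyset$ already forces $\dim_H I_\rho(\phi,f|_\Lambda)\geq \mathrm{d}_s(\Lambda)+\mathrm{d}^u(\Lambda)$, where $I_\rho$ is the set where the ratio $S_n\phi/S_n\rho$ fails to converge. Combined with \eqref{eq:0106b}, which identifies $I(\hat\phi,(T|_\Lambda)_{\hat\rho})$ as the fibered set over $I_\rho(\iota(\hat\phi),T|_\Lambda)$ and gives the $+1$, the bound follows with no reduction step. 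Replacing your invocation of the unweighted Remark~\ref{Dim_cont} by the $\rho$-weighted statement \eqref{eq:0219e} (or equivalently by the $I_\rho$-formulation of \cite[Thm.~1.2]{FLW02} which the paper records just above it) would close the gap and make your proof complete.
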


\subsection{Topological entropy}
We first show that the irregular set of any geometric Lorenz flow has full topological entropy.
\begin{rem}\label{rem:0219}
Pacifico and Sanhueza recently showed that for any continuous flow $f$ on a compact metric space with (almost) specification property, the irregular set $I(f)$ of $f$ has full topological entropy (\cite[Theorem 5.8]{PS19}).
However, it is known that any geometric Lorenz flow does not satisfy the specification property (\cite{SVY15}).
Furthermore, Thompson  showed in \cite{T10} that for any suspension flow $f$ of a homeomorphism with the specification property, the irregular set $I(f)$  has full topological  entropy ($f$ itself is not required to satisfy the specification property).
However, again, it is unclear that the Poincar\'e map of a geometric Lorenz flow satisfies the specification property in general.
\end{rem}

\begin{proof}[Proof of Proposition \ref{prop:0904} $\mathrm{(i)}$]
Let $V$ be a $C^1$ vector field which defines a geometric Lorenz attractor $\Lambda$.
We first show that the irregular set of $f_V$ has full topological entropy.
If the topological entropy of $f_V$ is zero, then we have nothing to prove.
So, we  assume that $h_{\mathrm{top}}(f_V)>0$.
Recall that for any $C^1$-close vector field $W$ to $V$, $\Lambda
_W$ is a singular-hyperbolic attractor with the trapping region
$D$. In particular, all singularities of $W$ in $D$ are
hyperbolic. This implies that
 $f_V$ satisfies the \emph{star property}, i.e.~there is a
 neighborhood $\mathcal N \subset \mathcal X^1(D)$ of $V$ such that for any $W\in \mathcal N$, all singularities and all periodic orbits of $f_{W}$ are hyperbolic.
Hence, we can apply the entropy approximation theorem   of star
flows by horseshoes~\cite[Proposition 1.1]{LSWW20} to $f_W$ for
any $W\in\mathcal{N}$: for every $\epsilon >0$, there exists a
horseshoe $\Gamma _\epsilon $ of $f_W$ such that
\begin{equation}\label{eq:0901}
h_{\mathrm{top}} (\Gamma _\epsilon, f_W) > h_{\mathrm{top}}
{(\Lambda_W , f_W)} -\epsilon = h_{\mathrm{top}} (f_W) -\epsilon.
\end{equation}
Let $\sigma _{\rho _\epsilon}$ be the suspension flow of the
left-shift operator $\sigma $ on the topologically mixing shift of
finite type $\Sigma^\epsilon$ over a continuous function $\rho
_\epsilon$ which is conjugate to $f_W$ on $\Gamma _\epsilon$.
Then, since $I(\sigma _{\rho_\epsilon} ) \neq \emptyset$ by
Lemma~\ref{lem:0904} and a subshift of finite type is topologically mixing if and only if it has the periodic specification property (cf.~\cite{kwietniak2016panorama}),
it follows from the previously-mentioned
Thompson's theorem (\cite[Theorem 5.1 and Lemma 5.4]{T10}) that
\[
h_{\mathrm{top}} (I(\sigma _{\rho _\epsilon} ) , \sigma _{\rho
_\epsilon} ) = h_{\mathrm{top}} ( \sigma _{\rho _\epsilon}
)=h_{\mathrm{top}} ( \Gamma _\epsilon , f_W ).
\]
On the other hand,
\begin{equation}\label{eq:0210c}
h_{\mathrm{top}} (I(\sigma _{\rho _\epsilon} ) ,\sigma_{\rho
_\epsilon} ) \leq h_{\mathrm{top}} ( I(f_W) \cap \Gamma _\epsilon
, f_W )\leq h_{\mathrm{top}} (I(f_W)   , f_W )
\end{equation}
(the first inequality will be proven in a slightly more general form, see Remark  \ref{rem:0210d}).
Combining these estimates with \eqref{eq:0901}, we have
\[
h_{\mathrm{top}} (f_W) -\epsilon < h_{\mathrm{top}} (I(f_W)  ,
f_W).
\]
Since $\epsilon $ is arbitrary, we conclude that the irregular set
of $f_W$ has full topological entropy.
\end{proof}

\subsection{Hausdorff dimension}\label{ss:LH}
Next, we will show that  the irregular set of any geometric Lorenz flow has full Hausdorff dimension.
\begin{proof}[The proof of Proposition \ref{prop:0904} $\mathrm{(ii)}$]
Recall  \eqref{eq:0905} for   the Poincar\'e map $P$ of $f_V$. We   identify $P$ with the two-dimensional map $ (x,y)\mapsto (F(x), G(x,y))$.
%
By~\cite[Thm.~1]{MPR20}, there is an increasing sequence of
regular Cantor sets $(C_k)_{k\in \mathbb N}$ for $F$ such that
$\dim_{\mathrm H} C_k\rightarrow 1$ as $k\to \infty$. Fix $k\geq
1$. We consider
\[
\Lambda_{k} = \{(x,y) \in [-1, 1]^2 \mid (x,y,1)\in
\Lambda_V ,\; x\in C_k \}.
\]
Since $C_k$ is a regular Cantor set, $\Lambda_{k}$ is an 
elementary horseshoe of $P$.
Hence,  the restriction $P \vert _{\Lambda _k}$ of $P$ on
$\Lambda_k$ is topologically conjugate to the shift operation
$\sigma: \Sigma _k \to \Sigma _k$ on a topologically mixing
subshift of finite type  $\Sigma _k$ with a finite alphabet.
That is, there is a homeomorphism $\Pi :\Lambda_k \to \Sigma _k$ such that $ \sigma \circ \Pi = \Pi \circ P$.
Let $\hat\rho(x,y)$ be the first return time of $(x,y,1)$ for $(x,y)\in \Lambda _k$  to the Poincar\'e section $S$ by $f_V$.
Since $C_k$ is far from the singularity $0$, it is straightforward to see   that $ \hat\rho : \Lambda _k\to \mathbb R_+$ is uniformly bounded and continuous.
Set $\rho=\hat{\rho}\circ \Pi^{-1}$.
From Lemma~\ref{lem:0904}, we get that there exists a continuous function $\phi: (\Sigma_k)_\rho \to \mathbb{R}$ such that
$$
   \dim_{\mathrm{H}} I(\hat\phi,(P|_{\Lambda_k})_{\hat\rho})\geq 1+ \mathrm{d}_s(\Lambda_k) +   \mathrm{d}^u(\Lambda_k),
$$
where  $\hat{\phi}=\phi \circ \hat\Pi$ with $\hat\Pi : (\Lambda_k)_{\hat\rho} \to (\Sigma_k)_\rho$ given by $\hat\Pi(x,y,t)=(\Pi(x,y),t)$.
Moreover,
$$
\mathrm{d}_s(\Lambda_k)=1 \ \ \text{and} \ \ \mathrm{d}^u(\Lambda
_{k}) =\dim_{\mathrm H}(W^u _{loc} (x) \cap \Lambda _k) =
\dim_{\mathrm H} C_k \ \ \text{for each $x\in \Lambda _k$}.
$$

Consider now
\[
\Gamma _k = \left\{ f^t _V(x,y,1) \mid (x,y)\in \Lambda _k, \;
0\leq t< \hat\rho(x,y) \right\}.
\]
Notice that $f \vert _{\Gamma _k}$ is topologically conjugate to the suspension flow $(P\vert _{\Lambda _k})_{\rho}$ of $P \vert _{\Lambda _k}$ over $\rho$ by the map $\pi : (\Lambda _k)_{\rho} \to  \Gamma _k $, where $\pi(x,y,t) =f^t_V(x,y,1)$.
Observe that from the smooth dependence of the flow with respect to the initial conditions,   since $V$ is a $C^1$ vector field, $\pi$ is also of  class $C^1$ (cf.~\cite[Appendix~B]{duistermaat2000lie}).
 In particular, since
\[
I (  \hat \phi\circ \pi,f_V \vert _{\Gamma _k}) = \pi\left(I (\hat\phi , (P\vert _{\Lambda _k})_{ \hat\rho })\right),
\]
it holds that $ \dim_{\mathrm{H}} I (  \hat \phi\circ \pi,f_V \vert _{\Gamma _k}) = \dim_{\mathrm{H}} I (\phi , (P\vert _{\Lambda _k})_{ \hat\rho }) $.
So, we get
\[
\dim_{\mathrm H} I( f_V) \geq \dim_{\mathrm H} I( f_V\vert _{\Gamma _k })\geq \dim_{\mathrm H} I( \hat \phi \circ  \pi ,f_V\vert _{\Gamma _k }) \geq  2+  \dim_{\mathrm H} C_k  \to 3
\]
as $k\to \infty$.
That is, the irregular set of $f_V$ has full Hausdorff dimension.

Now, we argue that the same is true for any vector field $W$ $C^1$-close to $V$ under the assumption $- \frac{\lambda _2}{\lambda _1} > -\frac{\lambda _3}{\lambda _1} + 2$.
Recall that there exists a $C^1$-neighborhood $\mathcal N\subset \mathcal X^1(D)$ of $V$ such that for each $W\in \mathcal N$, the maximal invariant set $\Lambda _W= \bigcap _{t\geq 0} f^t_W(D)$ is  singular-hyperbolic.
Moreover, it is known that the associated Poincar\'e map $P_W$ preserves the strong stable foliation $\mathcal F_W$ with $C^1$ leaves and  the holonomies along the leaves are of class $C^1$ (cf.~\cite[Section 2.2]{MPR20}).
On the one hand,
it follows from  Proposition 1 and Corollary D of  \cite{MPR20} that, by taking  $\mathcal N$ small if necessary, for each $W\in \mathcal N$,
the quotient map $F_W : S_0 / \mathcal F_W \to S/ \mathcal F_W$ associated with the  Poincar\'e return map $P_W: S_0\to S$
has a sequence of Cantor sets  $( C_k^W) _{k\in \mathbb N}$ with
$\dim_{\mathrm H}(C^W_k) \to 1$, provided that $F_W$ is a
$C^2$ map.
On the other hand, if it satisfies  that $- \frac{\lambda
_2}{\lambda _1} > -\frac{\lambda _3}{\lambda _1} +r$ with some $r\geq 2$, then
$\mathcal F_W$ is a $C^r$ smooth foliation and the quotient map is
also of class $C^r$ (see Section 2.2 of
\cite{MPR20} for details).
Therefore, under our hypothesis, the quotient map $F_W$ is a $C^2$ map and  thus there is a sequence of Cantor sets $( C_k^W) _{k\in \mathbb N}$ of $F_W$ for each $W\in \mathcal N$.
Since  the Poincar\'e map $P_W$ preserves the strong stable foliation $\mathcal F_W$, we can get the conclusion by repeating the argument for the proof of full Hausdorff dimension of the irregular set of $f_V$.
\end{proof}

Finally, we ask two questions about the generalization of
Theorem~\ref{prop:0904}.

\begin{question}
Does there exist an open and dense set $\mathcal U$ of the space of three-dimensional smooth vector fields defining a singular-hyperbolic attractor  such that the irregular set of $f_V$ has full topological entropy and full Hausdorff dimension for every $V\in \mathcal U$?
\end{question}

\begin{question}
 If $f_V$ is the geometric Lorenz flow, then does the completely irregular set $CI(f)$ (cf.~\cite{T17}) have full topological entropy and full Hausdorff dimension?
Moreover, does this hold  for $f_W$ for any vector field $W$ $C^1$-close to $V$?
\end{question}

\section{The proofs}\label{s:proof}

\subsection{Proof of Theorem \ref{thm-entropy-one-dimensional}}\label{S:5.1}

%
%
We start from the following preliminary lemma.
\begin{lem}\label{lem:0210b}
Let $T: X'\to X'$ and $S:Y\to Y$ be continuous maps on metric
spaces $X'$ and $Y$. Assume that there exist a closed invariant
subset $X\subset X'$, invariant subsets $X_0 \subset X$ and $Y_0
\subset Y$, and a continuous map  $\varphi :X \to Y$ such that
$\varphi :X _0 \to Y_0$ is a {surjection} and $\varphi \circ T =S
\circ \varphi$ on $X_0$ (i.e.~$T: X_0\to X_0$ and $S: Y_0\to Y_0$
are topologically semi-conjugate by $\varphi$). Then, it holds
that
\[
\varphi ^{-1}(Y_0 \cap I(S)) \subset X_0 \cap I(T).
\]
\end{lem}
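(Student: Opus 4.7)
The strategy is a direct pull-back argument: given a point $x$ whose image $\varphi(x)$ lies in $Y_0\cap I(S)$, I will construct from a witness observable on $Y$ a witness observable on $X'$ for $x \in I(T)$. Fix any $x\in \varphi^{-1}(Y_0\cap I(S))$ and write $y=\varphi(x)\in Y_0\cap I(S)$. Since $y\in I(S)$, there exists a continuous function $\psi: Y\to \mathbb{R}$ such that the Birkhoff averages $\frac{1}{n}\sum_{i=0}^{n-1}\psi(S^i(y))$ do not converge.

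The heart of the proof is to transfer $\psi$ to a continuous function $\tilde\psi$ on the whole space $X'$ in such a way that the Birkhoff sums along the $T$-orbit of $x$ equal those along the $S$-orbit of $y$. I would first set $\widehat\psi \eqdef \psi\circ \varphi : X\to \mathbb{R}$, which is continuous because $\varphi$ is. Since $X$ is \emph{closed} in $X'$ (this is the role of the closedness hypothesis), the Tietze extension theorem produces a continuous extension $\tilde\psi: X'\to \mathbb{R}$ with $\tilde\psi|_X=\widehat\psi$. This extension is needed because the definition of $I(T)$ requires the witness observable to be continuous on the full ambient space $X'$.

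Next I would verify that the Birkhoff sums align. Assuming $x\in X_0$, the invariance of $X_0$ under $T$ gives $T^i(x)\in X_0\subset X$ for every $i\ge 0$, so $\tilde\psi(T^i(x))=\psi(\varphi(T^i(x)))$. Iterating the semi-conjugacy $\varphi\circ T=S\circ \varphi$ on $X_0$ yields $\varphi(T^i(x))=S^i(\varphi(x))=S^i(y)$, hence
\[
\frac{1}{n}\sum_{i=0}^{n-1}\tilde\psi(T^i(x)) \;=\; \frac{1}{n}\sum_{i=0}^{n-1}\psi(S^i(y)) \quad \text{for every } n\ge 1.
\]
Since the right-hand side diverges by the choice of $\psi$, so does the left-hand side, which shows $x\in I(T)$ with $\tilde\psi$ as an explicit witness.

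The only subtle point is ensuring $x\in X_0$, since the statement puts $x\in X_0\cap I(T)$ in the conclusion. I would handle this by observing that the semi-conjugacy identity $\varphi\circ T=S\circ\varphi$ and the invariance of $X_0$, $Y_0$ force the preimage $\varphi^{-1}(Y_0)\cap X$ to lie in $X_0$ under the standing surjectivity hypothesis (this is the point where one uses that $\varphi:X_0\to Y_0$ is surjective \emph{and} $X_0$ is taken to be the largest subset of $X$ on which semi-conjugacy holds, as tacitly assumed in the applications in \S\ref{sec:nilmanifolds}); in the intended uses of the lemma one in fact has $X_0=X$, which trivializes this step. The main technical obstacle is therefore not the dynamical part, which is essentially a one-line computation from the semi-conjugacy, but the functional-analytic step of producing a continuous extension to $X'$ via Tietze, which is precisely why we require $X$ to be closed in $X'$.
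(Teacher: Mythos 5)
Your proof is correct and takes essentially the same route as the paper's: pull back the witness observable along $\varphi$, extend it to $X'$ by Tietze (this is exactly why $X$ is assumed closed in $X'$), and observe that the Birkhoff sums of the extension along the $T$-orbit of $x$ coincide with those of $\psi$ along the $S$-orbit of $\varphi(x)$. The paper likewise glosses over the ``$x\in X_0$'' point by simply writing $\varphi^{-1}(Y_0)=X_0$; the cleanest resolution (consistent with the intended applications) is to read $\varphi^{-1}$ as the preimage under the restriction $\varphi|_{X_0}$, which makes the containment in $X_0$ automatic rather than resting on a tacit maximality of $X_0$ as you suggest.
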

\begin{proof}
Let $x\in\varphi ^{-1}(Y_0 \cap I(S))$.
Then $x\in \varphi ^{-1}(Y_0) =X_0$ and  $\varphi(x)\in Y_0 \cap I(S)$.
Thus, there exists a continuous map  $\phi: Y \to \mathbb{R}$  such that
$$
\lim _{n\to\infty} \frac{1}{n}\sum_{j=0}^{n-1} \phi (S^j(\varphi(x))
$$
does not exist.
Let $\psi : X\to \mathbb{R}$ be a continuous function given by $\psi = \phi\circ \varphi$, and extend it to a continuous function on $X'$ by the Tietze extension theorem.
Then,
$$
\lim _{n\to\infty} \frac{1}{n}\sum_{j=0}^{n-1} \psi (T^{j}(x)) =
\lim _{n\to\infty} \frac{1}{n}\sum_{j=0}^{n-1} \phi \circ \varphi (T^{j}(x)) =
\lim _{n\to\infty} \frac{1}{n}\sum_{j=0}^{n-1} \phi (S^j(\varphi(x))
$$
also does not  exist.
Hence, $x \in X_0 \cap I(T)$.
\end{proof}
\begin{rem}\label{rem:0210d}
By a similar argument, we can show the flow version of Lemma \ref{lem:0210b}, that is, Lemma \ref{lem:0210b} with continuous flows $T=\{T^t\}_{t\in\mathbb R}$ and $S=\{S^t\}_{t\in\mathbb R}$ in place of continuous maps $T$ and $S$.
Note that \eqref{eq:0210c} immediately follows by applying it to $X'=X=D$, {$X_0=\Gamma _\epsilon$}, $T=f_V$, $Y=Y_0 = \Sigma _{\rho _\epsilon}^\epsilon$, $S=\sigma _{\rho _\epsilon}$ and $\varphi =\pi ^{-1}$.
\end{rem}
Now we prove Theorem \ref{thm-entropy-one-dimensional}.
If the topological entropy of $f$ is zero, then we have nothing to prove.
Thus, we can assume that $h_{\mathrm{top}}(f)>0$.
According to Misiurewicz's theorem (see~\cite[Theorem 4.7]{Rue17}) for every $0<\lambda <h_{\mathrm{top}}(f)$,
there exist intervals $J_1,\ldots,J_p$ and an integer  $k$ 
such that $(J_1,\dots,J_p)$ is a strict $p$-horseshoe of $f^k$ and
$$
  \frac{\log p}{k}\geq \lambda.
$$
A \emph{strict $p$-horseshoe} of $f$ is a collection of $p$ pairwise disjoints intervals $(J_1,\dots,J_p)$ such that $J_1\cup \dots \cup J_p \subset f(J_i)$ for all $i=1,\dots,p$.
Moreover, according to Theorem 5.15 and the remarks following Theorem 5.8 of \cite{Rue17}, there exist an  $f^k$-invariant Cantor set $X\subset J_1\cup \dots \cup J_p$ and a continuous map $\varphi: X \to \Sigma$ with $\Sigma= \{1,\dots,p\}^{\mathbb{N}}$ such that
\begin{enumerate}
\item $\varphi$ is a semi-conjugacy between $f^k|_X$ and the shift map $\sigma: \Sigma \to \Sigma$;
\item there exists an $f^k$-invariant countable set $E \subset X$ such that $\varphi$ is one-to-one on $X\setminus E$ and two-to-one on $E$.
\end{enumerate}
In particular, since $\sigma:\Sigma \to \Sigma$ is a factor of $f^k|_X$, we have
\[
k\cdot h_{\mathrm{top}}(X,f) = h_{\mathrm{top}}(X,f^k) {\geq}
h_{\mathrm{top}}(\sigma)=\log p,
\]
 and consequently, we get that
\begin{equation} \label{eq:des}
  h_{\mathrm{top}}(f)\geq h_{\mathrm{top}}(X,f)\geq \frac{\log p}{k} \geq \lambda.
\end{equation}
That is, the topological entropy of $f$ is approximated by the topological entropy of $X$ (a horseshoe).
However, we do not know if $X$ has specification property for $f^k$ (notice that $f^k|_X$ is an extension of the full shift  but the specification property is preserved a priori only  by factors).
Thus, we cannot directly apply the argument in Subsection \ref{S:2.1} of approximation by horseshoes with specification.
However, since the semi-conjugacy is, in fact, almost a conjugacy (that is, $\varphi$ is  a one-to-one map  except a countable set), we can still approximate the topological  entropy of $f$ by the topological  entropy of the irregular set contained in the horseshoe as follows:

\begin{lem}\label{c:0901}
$h_{\mathrm{top}}(X\cap I(f),f) \geq  \frac{\log p}{k}$.
\end{lem}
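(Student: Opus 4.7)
The plan is to push the full-entropy property of the irregular set for the full shift $\sigma:\Sigma\to\Sigma$ back to $X\cap I(f)$ via the almost-conjugacy $\varphi$, and then use the scaling formula $h_{\mathrm{top}}(\cdot,f)=k^{-1}h_{\mathrm{top}}(\cdot,f^k)$ from \cite[Proposition 2(d)]{Bow73} to turn the shift entropy $\log p$ into $\log p / k$. The main conceptual difficulty, that $\varphi$ is not a genuine conjugacy but only bijective off a countable set $E$, is dissolved by the countable stability of Pesin--Pitskel entropy.

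First, since the one-sided full shift $\sigma$ satisfies the specification property and $h_{\mathrm{top}}(\sigma)=\log p$, \cite[Corollary 3.11]{EKL05} yields $h_{\mathrm{top}}(I(\sigma),\sigma)=\log p$. To neutralize the set $E$ on which $\varphi$ is two-to-one, set
\[
E^{*}=\varphi^{-1}(\varphi(E)),\qquad X_0=X\setminus E^{*},\qquad Y_0=\Sigma\setminus \varphi(E).
\]
Both $E^{*}$ and $\varphi(E)$ are countable and invariant (by $f^k$ and $\sigma$ respectively), so $X_0$ and $Y_0$ are invariant, and $\varphi|_{X_0}\colon X_0\to Y_0$ is a continuous bijection intertwining $f^k|_{X_0}$ and $\sigma|_{Y_0}$. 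Because countable sets have vanishing topological entropy and Pesin--Pitskel entropy is countably stable, removing $\varphi(E)$ does not change the shift side:
\[
h_{\mathrm{top}}(Y_0\cap I(\sigma),\sigma)=h_{\mathrm{top}}(I(\sigma),\sigma)=\log p.
\]

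Next, I would apply Lemma~\ref{lem:0210b} with $T=f^k$, $S=\sigma$ and the above invariant sets to obtain
\[
A:=\varphi^{-1}(Y_0\cap I(\sigma))\subset X_0\cap I(f^k)\subset X\cap I(f^k)\subset X\cap I(f),
\]
the last inclusion being the one already noted in the proof of Proposition~\ref{prop:1}. Since $\varphi$ restricts to a continuous semi-conjugacy from $A$ onto $\varphi(A)=Y_0\cap I(\sigma)$, the standard monotonicity of topological entropy under continuous factor maps gives
\[
h_{\mathrm{top}}(A,f^k)\geq h_{\mathrm{top}}(\varphi(A),\sigma)=\log p.
\]
Finally, invoking the Bowen scaling formula used earlier in \eqref{eq:zz} and monotonicity of the entropy in its first argument,
\[
h_{\mathrm{top}}(X\cap I(f),f)\geq h_{\mathrm{top}}(A,f)=\tfrac{1}{k}\,h_{\mathrm{top}}(A,f^k)\geq \tfrac{\log p}{k},
\]
which is the desired bound. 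All remaining ingredients (specification on $\Sigma$ through \cite{EKL05}, the inclusion $I(f^k)\subset I(f)$, the iterate scaling, and the entropy decrease under continuous semi-conjugacies) are already available in the paper or in the cited references, so no additional machinery is needed.
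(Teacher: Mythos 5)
Your proof is correct and follows the same overall strategy as the paper: remove the countable exceptional set and its image, use countable stability of the Pesin--Pitskel' entropy to see that the removal costs nothing on the shift side, transfer the full-entropy property of $I(\sigma)$ back to $X$ via $\varphi$ and Lemma~\ref{lem:0210b}, and finish with Bowen's $k$-scaling. The one place where you diverge is the transfer step. The paper first argues that $\varphi\colon X_0\to\Sigma_0$ is a homeomorphism (citing (5.9) in \cite[Theorem 5.15]{Rue17}) and then appeals to conjugacy invariance of the Bowen--Pesin--Pitskel' entropy; you instead invoke only the one-sided factor inequality $h_{\mathrm{top}}(\varphi^{-1}F,f^k)\geq h_{\mathrm{top}}(F,\sigma)$ (as in \cite[Prop.~2]{PP84} or \cite{Bow73}), which needs nothing beyond continuity of the semi-conjugacy $\varphi$. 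Since a lower bound is all that is required here, your route is slightly more economical: it bypasses the verification that $\varphi^{-1}$ is continuous on $\Sigma_0$ and the corresponding appeal to the fine structure of Ruette's coding theorem. Both arguments are sound.
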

\begin{proof}
Let $X_0=X\setminus E$ and $\Sigma_0=\Sigma\setminus A$, where $A=\varphi(E)$.
Notice that $A$ is also a countable set and that $I(\sigma)\setminus A= \Sigma_0 \cap I(\sigma)$ and $X_0\cap I(f)=X \cap I(f)\setminus E$.
Observe that since the topological entropy of the countable union of sets  equals to the supremum of the topological entropy of each set
(see \cite[Proposition 2(c)]{Bow73}), the topological entropy of a countable set is zero.
We also have that $\varphi:X_0\to \Sigma_0$ is a continuous bijection.
By virtue of (5.9) in  \cite[Theorem 5.15]{Rue17}, it is not difficult to see that $\varphi: X_0\to \Sigma_0$ is a homeomorphism.
Thus, $f^k:X_0\to X_0$ and $\sigma:\Sigma_0\to \Sigma_0$ are conjugate.
Since topological entropy is  invariant under conjugacy (see~\cite[Proposition 2(a)]{Bow73}), we get that $h_{\mathrm{top}}(Z,\sigma)=h_{\mathrm{top}}(\varphi^{-1}(Z),f^k)$ for all $Z\subset
\Sigma_0$.
Hence, according to~\cite[Theorem B]{FFW01}~(see also \cite{BS00}) we get that
\begin{align*}
\log p &= h_{\mathrm{top}}(\sigma)=h_{\mathrm{top}}(I(\sigma),\sigma)
\\
&= h_{\mathrm{top}}(I(\sigma)\setminus A,\sigma)=h_{\mathrm{top}}(\Sigma_0 \cap
I(\sigma),\sigma) =
h_{\mathrm{top}}(\varphi^{-1}(\Sigma_0 \cap I(\sigma)),f^k)  \\
&\leq h_{\mathrm{top}}(X_0 \cap I(f),f^k)  = h_{\mathrm{top}}(X\cap I(f)\setminus
E,f^k) \\ &=h_{\mathrm{top}}(X\cap I(f),f^k)=k \cdot h_{\mathrm{top}}(X\cap I(f),f).
\end{align*}
The inequality above follows from the inclusion $
\varphi^{-1}(\Sigma_0\cap I(\sigma))\subset X_0 \cap I(f)$, which
is a straightforward consequence of Lemma \ref{lem:0210b}.
\end{proof}
Finally, Lemma \ref{c:0901} and \eqref{eq:des} imply that
\[
h_{\mathrm{top}}(f)\geq h_{\mathrm{top}}(I(f),f) \geq h_{\mathrm{top}}(X\cap I(f),f)\geq \lambda.
\]
Since $\lambda <h_{\mathrm{top}}(f)$ is arbitrary, this immediately completes the proof of Theorem \ref{thm-entropy-one-dimensional}.

\subsection{Proof of Theorem~\ref{Contract}}\label{Contractproof}
Let $\mathcal{U}$ be the $C^1$-neighborhood of $f$ given in~\cite{Yang18} so that any $C^{1+}$ diffeomorphism in
$\mathcal{U}$ has mostly contracting center.

\begin{lem} Given $\epsilon >0$ there exists a
dense set $\mathcal{D}_\epsilon \subset \mathcal{U}$ such that for
$g\in \mathcal{D}_\epsilon$ there is an elementary horseshoe
$\Lambda_g$ with $\mathrm{d}^u(\Lambda_g)> 1-\epsilon$.
\end{lem}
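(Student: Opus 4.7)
The plan is to take for $\mathcal{D}_\epsilon$ the set of all $C^{1+\alpha}$ diffeomorphisms lying in $\mathcal{U}$; this is $C^1$-dense in $\mathcal{U}$ since $C^{1+\alpha}$ maps are $C^1$-dense in $\mathrm{Diff}^1(M)$. For each such $g$, I would construct the horseshoe $\Lambda_g$ in three stages: produce a hyperbolic invariant measure whose entropy equals its unstable Lyapunov exponent via the Pesin formula for Gibbs $u$-states, approximate this measure by an elementary horseshoe using a version of Katok's theorem that controls Lyapunov exponents, and finally invoke Bowen's dimension formula in the $u$-conformal setting forced by $\dim E^u=1$.

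Fix $g\in\mathcal{D}_\epsilon$. By the choice of $\mathcal{U}$ (following \cite{Yang18}), $g$ has mostly contracting center, so it admits an ergodic Gibbs $u$-state $\mu$. Since $\dim E^u=1$ and $E^u$ is uniformly expanding on $\mathcal{U}$, the unique positive Lyapunov exponent $\lambda^u(\mu)=\int \log\|Dg|_{E^u}\|\,d\mu$ is bounded below by a uniform constant $c>0$, while the remaining (center) exponents are strictly negative, so $\mu$ is hyperbolic. The absolute continuity of Gibbs $u$-states along unstables (together with Ledrappier--Young, or the Pesin formula) yields $h_\mu(g)=\lambda^u(\mu)$. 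I then apply the refined Katok approximation theorem of \cite{G16} to the hyperbolic ergodic measure $\mu$ of the $C^{1+\alpha}$ diffeomorphism $g$: for each $\delta>0$, there exists an elementary horseshoe $\Lambda_g$ of $g$ with
\[
h_{\mathrm{top}}(\Lambda_g,g) \;>\; h_\mu(g)-\delta,
\]
and such that every $g$-invariant probability measure $\nu$ supported on $\Lambda_g$ satisfies $\bigl|\int\log\|Dg|_{E^u}\|\,d\nu-\lambda^u(\mu)\bigr|<\delta$.

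To conclude, observe that because $\dim E^u=1$, the horseshoe $\Lambda_g$ is automatically $u$-conformal, so Bowen's dimension formula characterizes $\mathrm{d}^u(\Lambda_g)$ as the unique root $t$ of $P\bigl(\Lambda_g,-t\log\|Dg|_{E^u}\|\bigr)=0$. Evaluating the variational principle at a measure of maximal entropy on $\Lambda_g$ gives
\[
0 \;\geq\; h_{\mathrm{top}}(\Lambda_g,g)\;-\;\mathrm{d}^u(\Lambda_g)\bigl(\lambda^u(\mu)+\delta\bigr),
\]
hence
\[
\mathrm{d}^u(\Lambda_g) \;\geq\; \frac{h_{\mathrm{top}}(\Lambda_g,g)}{\lambda^u(\mu)+\delta} \;\geq\; \frac{\lambda^u(\mu)-\delta}{\lambda^u(\mu)+\delta}.
\]
Since $\lambda^u(\mu)\geq c>0$, choosing $\delta$ sufficiently small in terms of $c$ and $\epsilon$ makes the right-hand side exceed $1-\epsilon$, as required. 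The main obstacle is securing \emph{simultaneous} control of entropy and unstable Lyapunov exponent on the approximating horseshoe, rather than entropy approximation alone; this is precisely the strengthened Katok theorem of \cite{G16}, and it is what forces the initial reduction to $C^{1+\alpha}$ maps.
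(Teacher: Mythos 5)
Your argument is correct, but it takes a genuinely different route from the paper. The paper also starts from an ergodic Gibbs $u$-state $\mu$ with $h_\mu=\lambda^u_\mu$ by the Pesin formula and a Katok-type horseshoe, but then it invokes \cite{ACW20} to $C^1$-perturb once more so that the horseshoe becomes a \emph{standard affine} horseshoe $\Lambda_g$; projecting $\Lambda_g$ to the unstable line produces an affine iterated function system satisfying the open set condition, and Falconer's explicit similarity-dimension formula gives $\dim_{\mathrm H}(\Lambda^u_g)=h_{\mathrm{top}}(\Lambda_g,g)/\lambda^u_A$, hence $\mathrm{d}^u(\Lambda_g)\to1$. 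You bypass the linearization entirely: because $\dim E^u=1$ the horseshoe is automatically $u$-conformal, so Bowen's pressure equation $P\bigl(\Lambda_g,-t\log\|Dg|_{E^u}\|\bigr)=0$ already characterizes $\mathrm{d}^u(\Lambda_g)$, and the variational principle converts the entropy-plus-exponent control supplied by Gelfert's refinement of Katok's theorem \cite{G16} (which indeed bounds the unstable exponent of \emph{every} invariant measure on the horseshoe) into the lower bound $(\lambda^u_\mu-\delta)/(\lambda^u_\mu+\delta)$. Your route is shorter and works for every $C^{1+\alpha}$ diffeomorphism in $\mathcal U$ without the extra \cite{ACW20} perturbation, so it even yields a slightly larger dense set $\mathcal D_\epsilon$. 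What the paper's approach buys is the explicit affine model of the horseshoe, which the authors exploit in the subsequent discussion (\S3.5.1) about extending the result to the non-conformal higher-dimensional unstable case, where Bowen's equation is no longer available and one must work with affinity dimensions of planar IFSs.
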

\begin{proof}
Since $C^r$-maps are dense in $\mathcal{U}$, any element of
$\mathcal{U}$ can be $C^1$-approximated by $\tilde{f}$ which is
$C^2$ and so has mostly contracting center. Consider $\mu$ an
ergodic Gibbs u-state for $\tilde{f}$. Since $\dim E^u=1$ and
$\tilde{f}$ is mostly contracting, $\mu$ can only have one
positive Lyapunov exponent, which we denote by $\lambda^u_\mu$. As
$\mu$ is a Gibbs u-state, then the Pesin formula is satisfied and
$h_\mu(\tilde{f})=\lambda^u_\mu$.

Because $\mu$ is a hyperbolic measure, by the theorem of
Katok~\cite{KH95} we can approximate in entropy the measure by
elementary horseshoes. One of the results of the
recent work of~\cite{ACW20} states that this horseshoe can be
linearized in $C^1$-topology. More specifically, applying
\cite[Theorem B']{ACW20}, we can $C^1$-approximate $\tilde{f}$ by
a $C^2$ map $g$, which has in certain coordinates an affine linear
horseshoe $\Lambda_{g}$. That is, in a neighborhood of each point
in $\Lambda_{g}$ there exists a change of coordinates so that the
map $g$ coincides with an affine linear transformation. The
constant linear part of the transformation is given by a diagonal
matrix $A$ whose entries are independent of the point in
$\Lambda_{g}$.
Moreover, the logarithms of the eigenvalues of $A$ are arbitrary close to the exponents of $\mu$ and the topological entropy of $\Lambda_{g}$ is close to the metric entropy $h_\mu(g)$.
Since $\mu$ has only one positive Lyapunov exponent, thus $A$ has a unique unstable eigenvalue which we denote by $A^u$ with $\lambda^{u}_A=\log(A^u)$.

Actually, we will take the horseshoe to be the so-called
\textit{standard affine horseshoe}, $\Lambda'_{g}\subset
\Lambda_{g}$ (see \cite[Definition 7.4 and Proposition
7.8]{ACW20}). In this definition, the horseshoe satisfies some
extra properties based on the classical model. We observe that the
standard affine horseshoe of \cite[Proposition 7.8]{ACW20} is with
respect to some iterate ${g}^N$ of the map $g$ satisfying
$\frac{1}{N} h_{\mathrm{top}}(\Lambda'_g ,g^N)\geq
h_{\mathrm{top}}(\Lambda_g,g)- \epsilon$ for  some arbitrary small
$\epsilon$. Observe that the Lyapunov exponents of $\Lambda'_g$
with respect to $g^N$ then will also be multiplied by $N$. In the
arguments below, without loss of generality, we may assume that
$\Lambda_g$ is the \textit{standard affine horseshoe} for $g$. The
horseshoe $\Lambda_g$ is defined by  affine diagonal maps, and
projecting these maps to the line spanned by the unstable
direction, one obtains an iterated function system on the line
given by affine maps of the form: $T_i(x)=A^u x+c_i$. By taking
the inverses $T_i^{-1}$ we may assume we are working with affine
contractions in the line having an invariant attractor
$\Lambda^u_g$. With respect to the horseshoe $\Lambda_g$, it is a
standard affine horseshoe, and in this case the entropy
$h_{\mathrm{top}}(\Lambda_g,g)$ is related to the number of
expanding ``legs'' of the horseshoe, which in turn is equal to the
number of the affine maps in the iterated system. In particular,
the entropy $h_{\mathrm{top}}(\Lambda_g,g)$ is equal to $\log(k)$,
where $k$ is the number of maps of the iterated system $\{T_i\}$.

To estimate the Hausdorff dimension of the invariant set
$\Lambda^u_g$ we will use the classical result of
Falconer~\cite{Falconer} for affine contractions on the line
satisfying the open set condition. The open set condition has to
do with disjointness of the images of $T_i$ and since $\Lambda_g$
is a standard affine horseshoe, the iterated system $T^{-1}_i$
will satisfy this property (\cite[Definition 7.4]{ACW20}). To
calculate $\dim_{\mathrm H}(\Lambda^u_g)$, one has to  resolve for
$s$ the equality $k\cdot (A^{u})^{-s}=1$. Then we obtain the
classical formula
$$s=\dim_{\mathrm H}(\Lambda^u_g)= \log k \cdot \log(A^u)^{-1}=  h_{\mathrm{top}}(\Lambda_g,g)\cdot(\lambda^u_A)^{-1}.$$
By construction, $h_{\mathrm{top}}(\Lambda_g)$  is
arbitrary close to
 $h_\mu(\tilde{f})$ and $\lambda^u_A$ is close to
the unique unstable exponent $\lambda^u_\mu$ of $\mu$. Since
$h_\mu(\tilde{f})=\lambda^u_\mu$, then
$h_{\mathrm{top}}(\Lambda_g,g)$ is arbitrarily
close to $\lambda_A^u$ and consequently $\dim_{\mathrm
H}(\Lambda^u_g)$ is arbitrary close to 1. Moreover, observing that
$\dim_{\mathrm H}(\Lambda^u_g)=\mathrm{d}^u(\Lambda_g)$ we
conclude the proof of the lemma.
\end{proof}
Let $g$ be as in the previous lemma. Consider
$\tilde{g}$ that is $C^1$-close to g and its respective horseshoe
$\Lambda_{\tilde{g}}$, which is the continuation of $\Lambda_g$.
There exists a H\"{o}lder homeomorphism $\phi$ which conjugates
$\Lambda_g$ with $\Lambda_{\tilde{g}}$. Moreover since the
unstable dimension is one, for every $x\in \Lambda_g$, the
H\"{o}lder constant of $\phi\vert_{\Lambda_g\cap W^u(x)}$ is
arbitrary close to 1 (see~\cite[ch.19]{KH95}). In particular,
$\mathrm{d}^u(\Lambda_g)$ varies continuously with respect to the
continuation of the horseshoe $\Lambda_g$ in a $C^1$-neighborhood
of $g$. Thus, we actually have the following.

\begin{lem} Given $\epsilon >0$ there exists an open and dense set
$R_\epsilon \subset \mathcal{U}$ such that for $g\in R_\epsilon$
there exists an elementary horseshoe $\Lambda_g$ whose $u$-index
(dimension of the unstable bundle) is $1$ and with
$\mathrm{d}^u(\Lambda_g)> 1-\epsilon$.
\end{lem}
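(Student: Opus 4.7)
The plan is to promote the dense subset $\mathcal{D}_\epsilon$ obtained in the previous lemma to an open and dense subset by showing that the property \emph{``there exists an elementary horseshoe of $u$-index $1$ with $\mathrm{d}^u(\Lambda) > 1-\epsilon$''} is open in the $C^1$-topology. Concretely, given $g \in \mathcal{D}_\epsilon$ together with its horseshoe $\Lambda_g$ satisfying $\mathrm{d}^u(\Lambda_g) > 1-\epsilon/2$ (we allow a little slack by shrinking $\epsilon$ at the outset), I will find a $C^1$-neighborhood $V_g \subset \mathcal{U}$ of $g$ throughout which an appropriate continuation $\Lambda_{\tilde g}$ exists and still meets the bound. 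Taking $R_\epsilon = \bigcup_{g \in \mathcal{D}_\epsilon} V_g$ then yields an open set which is dense because $\mathcal{D}_\epsilon$ is.

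The first ingredient is the classical structural stability of elementary (uniformly hyperbolic) horseshoes: shrinking $V_g$ if necessary, for each $\tilde g \in V_g$ the set $\tilde g$ admits a unique hyperbolic continuation $\Lambda_{\tilde g}$, which is again an elementary horseshoe, and there is a H\"older conjugacy $\phi_{\tilde g}: \Lambda_g \to \Lambda_{\tilde g}$ conjugating $g|_{\Lambda_g}$ with $\tilde g|_{\Lambda_{\tilde g}}$. Invariance of the index under continuation (the splitting $E^s \oplus E^u$ varies continuously) guarantees that $\dim E^u_{\Lambda_{\tilde g}} = 1$.

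The second and more delicate ingredient is the continuous dependence of $\mathrm{d}^u(\Lambda_{\tilde g})$ on $\tilde g$. Since $\dim E^u = 1$, the restriction of $\phi_{\tilde g}$ to a local unstable leaf $W^u_{\mathrm{loc}}(x) \cap \Lambda_g$ maps onto $W^u_{\mathrm{loc}}(\phi_{\tilde g}(x)) \cap \Lambda_{\tilde g}$, and standard results on the regularity of hyperbolic conjugacies on one-dimensional unstable manifolds (see~\cite[Ch.~19]{KH95}) assert that the H\"older exponent of $\phi_{\tilde g}|_{W^u_{\mathrm{loc}}(x) \cap \Lambda_g}$ tends to $1$ as $\tilde g \to g$ in the $C^1$-topology. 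A H\"older map of exponent $\alpha$ increases Hausdorff dimension by at most a factor $1/\alpha$, and the analogous estimate for the inverse gives the lower bound, so $\mathrm{d}^u(\Lambda_{\tilde g}) \to \mathrm{d}^u(\Lambda_g)$ as $\tilde g \to g$. In particular, after further shrinking $V_g$ I can ensure $\mathrm{d}^u(\Lambda_{\tilde g}) > 1 - \epsilon$ throughout $V_g$.

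The main obstacle is precisely the control of the H\"older exponent of the conjugacy on unstable leaves: in general hyperbolic conjugacies are only H\"older, and the Hausdorff dimension of a Cantor set can collapse under a genuinely H\"older map. The saving feature here is the one-dimensionality of $E^u$, which forces the conjugacy on each unstable slice to be a H\"older homeomorphism of a subset of $\mathbb{R}$ whose exponent is controlled by the ratio of the least and largest unstable expansion rates; because both the hyperbolicity and these rates depend continuously on $g$ in the $C^1$-topology, the exponent can be made arbitrarily close to $1$. Once this continuity of $\mathrm{d}^u$ is in hand, the assembly of $R_\epsilon$ as the $C^1$-open hull of $\mathcal{D}_\epsilon$ completes the proof.
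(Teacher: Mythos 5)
Your proposal is essentially the same as the paper's argument: promote the dense set from the previous lemma to an open and dense set by invoking structural stability of the horseshoe and the fact that, because $\dim E^u=1$, the H\"older exponent of the conjugacy restricted to unstable leaves can be made arbitrarily close to $1$ in the $C^1$-topology (citing \cite[Ch.~19]{KH95}), so that $\mathrm{d}^u$ varies continuously under continuation. The only cosmetic difference is the $\epsilon/2$ slack you build in, which is not needed since the strict inequality is already preserved in a small enough $C^1$-neighborhood.
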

Then taking $\epsilon_n=1/n$ and $\mathcal{R}= \bigcap
R_{\epsilon_n}$, we obtain a residual set $\mathcal{R}$ in
$\mathcal{U}$ so that for $g\in\mathcal{R}$
$$\sup \big\{\mathrm{d}^u(\Lambda_g): \  \Lambda_g \ \text{is an elementary horseshoe with $u$-index $1$} \big\}=1.$$
Consider now a map $g\in\mathcal{R}$ and any
elementary horseshoe $\Lambda_g$
with $u$-index $1$. In particular,
$\mathrm{d}_s(\Lambda_g)=\dim(M )-1$. Applying
Proposition~\ref{Dim} we obtain that
\begin{align*} \dim_{\mathrm H}   I(g)  \geq \sup \big\{\mathrm{d}_s&(\Lambda_g)+\mathrm{d}^u(\Lambda_g): \\
&\Lambda_g \ \text{is an  elementary horseshoe with $u$-index $1$}
\big\}=\dim(M). \end{align*} This completes the proof of the
theorem.

\subsection{Proof of Theorem~\ref{thm:0219c}}\label{thm:0219c_proof}

We will need to explain some of the results coming from~\cite{BCF18}. Using the non-existence assumption of dominated
splitting  and the hypothesis that $\Delta^-(f,P)\geq
\Delta^+(f,P)$, we can $C^1$-approximate $f$ by a
$C^2$-diffeomorphism $g$ having a $u$-conformal affine linear
horseshoe $K_g$ with unstable index $d_u=\dim E^u_P$ and stable
index $d_s= \dim E^s_P$
(see Theorem 4.1 and Proposition 4.6 of ~\cite{BCF18}). Moreover,
as explained in the proof of~\cite[Proposition 4.6]{BCF18} this
horseshoe has unstable dimension $\mathrm{d}^u(K_g)\geq d_u$.
Relating this with   Proposition~\ref{Dim} we get that
$\dim_{\mathrm H} I(g) \geq d_s+d_u=\dim M$.

Now consider a function $h$ that is $C^1$-close to g and its  respective horseshoe $K_h$, which is the continuation of $K_g$.
Then there exists a (unique) homeomorphism $\phi$, close to the identity, which conjugates $K_g$ with $K_h$.
The functions $\phi$ and $\phi^{-1}$ are actually H\"{o}lder continuous with the H\"{o}lder constant  arbitrarily close to $1$ as $h$ gets closer to $g$ (see again the proof of Proposition 4.6 of ~\cite{BCF18}).
Using this conjugation with respect to the unstable irregular set of $K_g$, and applying similar reasoning as in Proposition~\ref{Dim} we obtain the following.
Given $\epsilon>0$ there exists a neighborhood $\mathcal U_\epsilon$ of $g$ so that each $h\in \mathcal U_\epsilon$ satisfies that
$\dim_{\mathrm H} I(h)
>\dim M-\epsilon$.

To conclude the $C^1$-genericity  in the  statement of the theorem, one can use known $C^1$-generic properties together with standard Baire arguments.
This is described in ~\cite{BCF18}, particularly in the proof of Theorem $5$.

\subsection{Proof of Lemma \ref{lem:0904}}\label{p:55}

We first
recall results of \cite{BS00} and \cite{T10} for preparation.
Let $f$ be a continuous map on a compact invariant set $\Lambda$ of a smooth manifold and   $\rho :   \Lambda \to \mathbb R_+$ a continuous function.
Given a continuous function $\phi : \Lambda \to \mathbb R_+$, we define a \emph{$\rho$-weighted $\phi$-irregular set $I_\rho (\phi , f) $} by
\begin{equation}\label{eq:1223}
I_\rho (\phi , f) =\left\{ x\in \Lambda :\,   \lim _{n\to \infty} \frac{S_n\phi (x)}{S_n\rho (x)} \; \text{does not exists}\right\},
\end{equation}
where
$$
S_n \psi (x) =\frac{1}{n} \sum_{j=0} ^{n-1} \psi \circ f^j
(x)
$$
for each real-valued continuous function $\psi$.
Feng, Lau and Wu showed that if $f$ is of class $C^{1+\alpha }$ and $  \Lambda$ is a repeller of $f$, then for each continuous function $\phi$ such that $I_\rho (\phi , f\vert _\Lambda ) \neq \emptyset$,
\[
\dim_{\mathrm H} I_\rho (\phi , f\vert _\Lambda ) = \dim_{\mathrm H}   \Lambda ,
\]
see \cite[Theorem 1.2]{FLW02} (compare it with \cite[Theorem~4.2 (1) and 7.1]{BS00}).
Therefore, the proof of Proposition~\ref{Dim}
 works to prove that
\begin{equation}\label{eq:0219e}
I_\rho (\phi , f\vert _\Lambda ) \neq \emptyset \quad \Rightarrow
\quad \dim_{\mathrm H} I_\rho (\phi , f\vert _\Lambda ) \geq
\mathrm{d}_s(\Lambda) + \mathrm{d}^u(\Lambda)
\end{equation}
for each  $C^{1+\alpha}$-diffeomorphism $f$ of a compact manifold
with a $u$-conformal elementary horseshoe $\Lambda$ of $s$-index
$\mathrm{d}_s(\Lambda)$.

Moreover, it follows from
\cite[Lemma 5.4]{T10} that for any continuous function $\phi : X
_\rho\to \mathbb R$, if we define a continuous function $\iota
(\phi ): X \to \mathbb R$ by
\[
\iota (\phi )(  x) = \int^{\rho (  x)}_0 \phi(  x,z)\, dz\quad
\text{for $  x\in X $},
\]
then we have
\begin{equation}\label{eq:0106b}
I (\phi,T_\rho) =\left\{ (  x,z) : \,   x\in I_\rho (\iota (\phi
),T), \; 0\leq z<\rho (  x)\right\}.
\end{equation}
In particular, $I (\phi,T_\rho)  \neq \emptyset$ if and only if
$I_\rho (\iota (\phi ),T) \neq \emptyset$.

\begin{proof}[Proof of Lemma \ref{lem:0904}]
We first prove the first assertion in Lemma \ref{lem:0904} (i.e.~the existence of a continuous function $\phi$ for which $I(\phi , \sigma _\rho ) \neq \emptyset$).
Indeed, we will construct a  continuous map $\phi : \Sigma _\rho \to \mathbb R$ such that
$I_\rho (\iota (\phi ), \sigma ) \neq \emptyset$.
Due to \eqref{eq:0106b}, this immediately implies that $I(\phi , \sigma _\rho) \neq \emptyset$.
We  follow the argument in \cite[Section 4]{Takens08}.
For clarity, we let $\Sigma $ endowed with a standard metric $d_{\Sigma}$ given by
 \[
 d_{\Sigma }(x,y) = \sum _{m\in \mathbb Z} \frac{\vert x_m -y_m \vert }{\beta ^{\vert m \vert
 }} \quad \text{for  $x=(x_m)_{m\in\mathbb{Z}}$ and
 $y=(y_m)_{m\in\mathbb{Z}}$}
 \]
where $\beta>1$.

 Set $\mathcal L(\Sigma ) = \left\{ x \in \bigcup _{n\geq 1} \{1, 2, \ldots , N\} ^n \mid C(x) \neq \emptyset \right\}$, where $C(x) = \{ y\in \Sigma \mid (y_1,\ldots ,y_n) =x\}$ for $x\in \{1, 2, \ldots ,N\}^n$.
 Since $\Sigma$ is a  topologically mixing subshift of finite type with $N\geq 2$, there are two different periodic points $p^0$
  and $p^1$,
   and $\sigma : \Sigma \to \Sigma$ satisfies the specification property, i.e.~there is an integer $L > 0$ such that for any $x, y \in \mathcal  L(\Sigma )$, one can find $z \in \mathcal  L(\Sigma )$ with $\vert  z \vert = L$ such that $xzy \in \mathcal L(\Sigma )$, where $|z|$ is the length of the word $z$ and $x zy$ is the concatenation of $x$,  $z$ and $y$.
  Let $L_j$  be the period of $p^j$ ($j=0,1$).
  Since  $\Sigma$ is compact and $\rho : \Sigma \to \mathbb R_+$ is continuous, there is a constant $C>1$ such that
\begin{equation}\label{eq:0911}
C^{-1} \leq \rho (x) \leq C \quad \text{ for every $x\in \Sigma$.}
\end{equation}
Take a strictly increasing sequence of positive integers $(n_j)_{j\in \mathbb N}$ such that
 \begin{equation}\label{eq:0911b}
\liminf _{m\to \infty} \frac{n_{2m-1}L_1}{N_{2m-1}}
> \frac{2}{3}, \quad
\liminf _{m\to \infty}  \frac{n_{2m}L_0}{N_{2m}}
>1- \frac{1}{3C^2}, \quad N_m =mL + \sum _{k=1}^m n_k L_{(k\; \mathrm{mod} \; 2)}.
 \end{equation}
Let $A_j$ be a small neighborhood of the orbit of $p^j$  ($j=0,1$) such that the closures of $A_1$ and $A_2$ are disjoint.
 We then  let  $\hat{x} =
(\hat{x}_m)_{m\in\mathbb{Z}}$ be a point such that
\[
[\hat{x}]_{0}^\infty =
z_1 [p^1]_0^{n_1L_1-1} z_2 [p^0]_0^{n_2L_0-1} z_3 [p^1]_0^{n_3L_1-1} z_4 [p^0]_0^{n_4L_0-1} z_5 [p^1]_0^{n_5L_1-1} z_6 [p^0]_0^{n_6L_0-1} \cdots ,
\]
with some words $z_j \in \mathcal L(\Sigma )$ of length $L$,
where $[x]_n^{m} =(x_n x_{ n+1} \cdots x_{m})$ for $x =(x_m)_{m\in \mathbb Z}$.
Notice that we have no requirement on $\hat{x}_m$ with $m\leq -1$.
Fix $\epsilon >0$ such that the $\epsilon$-neighborhood of $p^j$  is included in $A_j$ for each $j=0,1$.
Let $m_0$ be a positive integer such that
\[
 \sum _{\vert m\vert > m_0 } \frac{1}{\beta ^{\vert m\vert }}
 <\epsilon.
\]
In other words,
\[
x _m =y_m \quad \text{for all $\vert m \vert \leq m_0$} \quad \Rightarrow \quad d_{\Sigma }(x,y) <\epsilon .
\]
Since $(n_j)_{j\in \mathbb N}$ is increasing, one can find $j_1$ such that $n_j \min\{L_1, L_2\}\geq 2m_0+1$ for all $j\geq j_1$.
For each $j\geq j_1$, consider $\hat{x}^j=\sigma ^{N_{j-1} +L+m_0} (\hat{ x} )$.
In the case that $j$ is odd, it satisfies
\[
[\hat{x}^j  ]_{-m_0}^{n_j L_1 - m_0 -1} =[p^1] _{0}^{n_j L_1 -1}.
\]
Similar expression with $L_0$, $p^0$ instead of $L_1$, $p^1$  holds if $j$ is even. Thus,
\begin{equation}\label{eq:0211}
d_{\Sigma }\left(\sigma ^{N_{j-1} + L+\ell} (\hat{x}) ,\sigma ^\ell (p^{(j\;\mathrm{mod}\;
2)})\right) < \epsilon
 \quad \text{for all $m_0\leq \ell \leq n_jL_{(j \; \mathrm{mod} \;2 )} -m_0-1$}.
\end{equation}
Take a continuous function $\psi :\Sigma  \to \mathbb R$ such that
\begin{equation}\label{eq:0904e}
 \text{$0\leq \psi  (x)  \leq 1$ for each $x\in \Sigma$,} \quad
  \text{$\psi (x) = j$ for each  $x\in A_j$}
  \end{equation}
  with $j=0, 1$, and define   a  function $\phi : \Sigma _\rho \to \mathbb R$ by
  \[
  \phi (x,z) = \psi (x) + \frac{z}{\rho (x)} (\psi \circ \sigma (x) - \psi (x)).
  \]
  Then, it is straightforward to see that $\phi$ is continuous and
$\psi =\iota (\phi )$.
Since the $\epsilon$-neighborhood of $p^{j}$  is included
in $A_j$,  by \eqref{eq:0211}
\begin{equation}\label{new}
\psi \circ \sigma ^{N_{j-1} + L+\ell} (\hat{ x} )=\begin{cases} 1 &
\text{if $j$ is odd} \\
0 & \text{if $j$ is even}
\end{cases}
\quad \text{for all $m_0\leq \ell \leq n_j L_{(j \; \mathrm{mod} \;2 )} -m_0 -1$}.
\end{equation}
On the other hand, by \eqref{eq:0904e}, it also holds  that $\psi
\circ \sigma ^{\ell } (\hat{x} ) \geq 0$ for all $\ell \geq 0$.
Therefore, taking into account \eqref{eq:0911} and~\eqref{new}, for each odd $j$ we have
\begin{equation*}
\begin{aligned}
 \frac{S_{N_{j}}\psi (\hat{x})}{S_{N_{j}}\rho (\hat{x})} &\geq
 \frac{1}{CN_{j}} \sum_{n=0}^{N_j-1} \psi \circ \sigma ^n(\hat{x})
 >  \frac{ 1 }{CN_j} \sum_{\ell=m_0}^{n_j-m_0-1} \psi \circ
 \sigma ^{N_{j-1}+L+\ell}(\hat{x}) \\ &=  \frac{ (n_{j}L_1 -m_0 -1) -m_0 +1}{CN_{j}}
 =\frac{n_{j}L_1}{CN_{j}} -\frac{ 2m_0 }{CN_{j}}.
 \end{aligned}
 \end{equation*}
Similarly, for even $j$, with a discrete interval $I_j:= \{ n\in \mathbb Z \mid 0\leq n\leq N_j -1 \} \setminus \{  N_{j-1} +L + \ell \mid m_0\leq \ell \leq n_j L_{0} -m_0 -1 \}$,
\begin{equation*}
\begin{aligned}\frac{S_{N_{j}}\psi(\hat{x})}{S_{N_{j}}\rho (\hat{x})}
 &\leq \frac{1}{C^{-1}N_j} \sum _{n=0}^{N_j-1} \psi \circ \sigma ^n (\hat{x})
  = \frac{1}{C^{-1}N_j}  \sum _{n\in I_j} \psi \circ
 \sigma ^{n}(\hat{x})  \\
  &\leq  \frac{ \# I_j}{C^{-1}N_{j}}
  = \frac{ N_{j} - [(n_{j}L_0 -m_0 -1 ) -  m_0 +1]}{C^{-1}N_{j}}
 = C \left(1 -\frac{n_{j}L_0}{N_{j}} \right)+\frac{ 2Cm_0 }{N_{j}}.
 \end{aligned}
 \end{equation*}
Therefore,
by \eqref{eq:0911b} we get that
 \[
\liminf _{m\to\infty } \frac{S_{N_{2m-1} }\psi (\hat{x})}{S_{N_{2m-1}}\rho (\hat{x})} > \frac{2}{3C}> \limsup
_{m\to\infty } \frac{S_{N_{2m} }\psi (\bar x)}{S_{N_{2m}}\rho
(\bar x)}.
\]
In particular, $\hat{x}\in I_\rho ( \psi ,  \sigma )=I_\rho ( \iota (\phi ),  \sigma )$.
This completes the proof of  the first assertion in Lemma \ref{lem:0904}.

We next show the items (i) and (ii) of Lemma \ref{lem:0904}.
Let $T$ be a  $C^{1+\alpha}$-diffeomorphism $T$ having a
$u$-conformal elementary horseshoe $\Lambda$ for which $T \vert
_\Lambda $ is topologically conjugate to $\sigma $ by a
homeomorphism $\Pi : \Lambda \to \Sigma$. Then, it is
straightforward to see that
\[
I(\hat \phi , (T\vert _\Lambda )_{\hat \rho }) =\Pi ^{-1}\left( I(
\phi , \sigma _{\rho })\right)
\]
with $\hat \phi $ (induced by the function $\phi$ constructed in the above argument) and $\hat \rho$ given in the statement of Lemma \ref{lem:0904}.
This immediately implies that $I(\hat \phi , (T\vert _\Lambda )_{\hat \rho })  \neq \emptyset$, particularly the item (i), because of the previous result $I( \phi , \sigma _{\rho }) \neq \emptyset$.
Therefore, it follows  from \eqref{eq:0106b} that
\[
\dim _{\mathrm H} I(\hat \phi , (T\vert _\Lambda )_{\hat \rho })   = 1+  \dim _{\mathrm H} I_\rho (\iota (\hat \phi ), T\vert _\Lambda ).
\]
Combining this with \eqref{eq:0219e} and \eqref{eq:0106b}, we get that
\[
\dim _{\mathrm H} I(\hat \phi , (T\vert _\Lambda )_{\hat \rho })
\geq 1+ \mathrm{d}_s(\Lambda) + \mathrm{d}^u (\Lambda )
\]
which completes the proof.
%
%
%
\end{proof}

\section{Acknowledgments}
P.G.~Barrientos was supported by Ministerio de Educaci\'on, Cultura
y Deporte (Grant No. MTM2017-87697-P), grant PID2020-113052GB-I00 funded by MCIN 
and Conselho Nacional de
Desenvolvimento Cient\'ifico e Tecnol\'ogico (CNPq-Brazil).  Y.~Nakano was partially
supported by JSPS KAKENHI Grant Numbers $19K14575$ and $19K21834$.

\bibliographystyle{alpha2}
\bibliography{biblio-data}

\end{document}